\newtheorem{theorem}{Theorem}[section]
\newtheorem{corollary}[theorem]{Corollary}
\newtheorem{lemma}[theorem]{Lemma}
\newtheorem{proposition}[theorem]{Proposition}
\theoremstyle{definition}
\newtheorem{definition}[theorem]{Definition}
\newtheorem{observation}[theorem]{Observation}
\newtheorem{problem}[theorem]{Problem}
\newtheorem{example}[theorem]{Example}
\newcommand{\ith}[1]{$#1^{\text{th}}$}
\newcommand{\Homclass}[4]{\Hom_{#1\to#2}(#3,#4)}
\newcommand{\homclass}[4]{\hom_{#1\to#2}(#3,#4)}
\newcommand{\trans}{\text{T}}
\renewcommand{\loop}{\circ}
\DeclareMathOperator{\diag}{diag}
\DeclareMathOperator{\Hom}{Hom}
\DeclareMathOperator{\ind}{ind}
\title{Hoffman-London graphs: When paths minimize\\ $H$-colorings among trees}
\author{
David Galvin\thanks{Galvin is in part supported by Simons Foundation Gift no. 854277 - Travel Support for Mathematicians.}\\
University of Notre Dame\\
{\tt dgalvin1@nd.edu}
\and
Phillip Marmorino\thanks{Marmorino is in part supported by NSF AGS-10002554.}\\
Purdue University\\
{\tt pmarmori@purdue.edu}
\and
Emily McMillon\thanks{McMillon is in part supported by NSF DMS-2303380.}\\
Rice University\\
{\tt em72@rice.edu}
\and
JD Nir\\
Oakland University\\
{\tt jdnir@oakland.edu}
\and
Amanda Redlich\\
University of Massachusetts Lowell\\
{\tt amanda\_redlich@uml.edu}
}
\begin{document}

\maketitle

\begin{abstract}
Given a graph $G$ and a target graph $H$, an \textit{$H$-coloring} of $G$ is an adjacency-preserving vertex map from $G$ to $H$. The number of $H$-colorings of $G$, $\hom(G,H)$, has been studied for many classes of $G$ and $H$. In particular, extremal questions of maximizing and minimizing $\hom(G,H)$ have been considered when $H$ is a clique or $G$ is a tree.

In this paper, we develop a new technique using automorphisms of $H$ to show that $\hom(T,H)$ is minimized by paths as $T$ varies over trees on a fixed number of vertices. We introduce the term \emph{Hoffman-London} to refer to graphs that are minimal in this sense. 
In particular, we define an \textit{automorphic similarity matrix} which is used to compute $\hom(T,H)$ and give matrix conditions under which $H$ is Hoffman-London.  

We then apply this technique to identify several families of graphs that are Hoffman-London, including loop threshold graphs and some with applications in statistical physics (e.g.~the Widom-Rowlinson model). By combining our approach with a few other observations, we fully characterize the minimizing trees for all graphs $H$ on three or fewer vertices.
\end{abstract}

\section{Introduction} \label{sec:introduction}

Graph coloring is a well-known problem in which vertices of a graph are assigned a color with the restriction that adjacent vertices receive different colors. In this paper, we consider the related notion of $H$-colorings of a graph $G$ in which vertices of $G$ are labeled by vertices of $H$ in such a way that adjacent vertices in $G$ must be ``colored'' by adjacent vertices of $H$. Formally, given a simple, loopless graph $G = (V(G), E(G))$ and a target graph $H = (V(H),E(H))$ without multi-edges, but possibly containing loops, an \emph{$H$-coloring} of $G$ is a map $f: V(G) \to V(H)$ that preserves adjacency: whenever $u \sim_G v$ we must have $f(u) \sim_H f(v)$. Note that when $H= K_q$, the complete graph on $q$ vertices, an $H$-coloring of $G$ is an assignment of the vertices of $G$ to one of $q$ values with the only restriction that adjacent vertices receive different labels---in other words, the proper $q$-colorings of $G$. In this sense, $H$-colorings are a generalization of proper vertex colorings, though, as we will see, $H$-colorings generalize several other common graph theoretic notions as well.

While we mainly focus on the framework of $H$-colorings, it should be noted that an $H$-coloring of $G$ is just a graph homomorphism from $G$ to $H$. Fittingly, we denote the set of all $H$-colorings of $G$ by $\Hom(G,H)$ and use $\hom(G,H)$ to count the number of such colorings. 

As mentioned, $H$-colorings encompass proper $q$-colorings by allowing $H$ to be a complete graph. By setting $H = H_{\ind}$, which is one looped vertex connected by an edge to an unlooped vertex, we see $H$-colorings of $G$ also encode the \emph{independent} (or \emph{stable}) sets of $G$: any collection of vertices mapped to the unlooped vertex contains no internal edges. Lov\'asz~\cite{Lovasz2012} investigated connections between $H$-colorings and graph limits, quasi-randomness, and property testing. In statistical physics, the language of $H$-coloring has been adopted to describe hard-constraint spin models; see e.g. \cite{BrightwellWinkler1999, BrightwellWinkler2002}.

This article contributes to a broader investigation into an extremal enumeration question: for a given family of graphs $\mathcal{G}$ and a target graph $H$, can we characterize those $G \in \mathcal{G}$ which maximize or minimize $\hom(G,H)$? The origins of this question are attributed to Birkhoff's work on the 4-color conjecture~\cite{Birkhoff1912, BirkhoffLewis1946} with more recent attention coming from questions of Wilf~\cite{Wilf1984} and Linial~\cite{Linial1986} concerning which graph on $n$ vertices and $m$ edges admits the most proper $q$-colorings, which is to say maximizes $\hom(G,K_q)$. For further results and conjectures, we direct the reader to the surveys~\cite{Cutler2012, Zhao2017}.

Here we consider the case where $\mathcal{G}$ is a family of trees; in particular we consider the family of graphs $\mathcal{G} = \mathcal{T}_n$, the set of all trees on $n$ vertices. As in many such questions, the path graph $P_n$ and star graph $S_n = K_{1,n-1}$ are natural candidates for extremal graphs. When $H = H_{\ind}$, Prodinger and Tichy~\cite{ProdingerTichy1982} proved that for each $n$, the number of independent sets among $n$ vertex trees is maximized by stars and minimized by paths, confirming that for each $T_n \in \mathcal{T}_n$,
\begin{equation} \label{inq:PT}
\hom(P_n, H_{\ind}) \le \hom(T_n, H_{\ind}) \le \hom(S_n, H_{\ind}).
\end{equation}
The Hoffman-London inequality (see~\cite{Hoffman1967,London1966,Sidorenko1985}) is equivalent to the statement that $\hom(P_n,H) \le \hom(S_n,H)$ for \emph{any} choice of $H$ and $n$. Sidorenko~\cite{Sidorenko1994} extended the Hoffman-London inequality to the following very general result:

\begin{theorem}[Sidorenko~\cite{Sidorenko1994}] \label{thm:siderenko}
Fix $H$ and $n \ge 1$. For any $T_n \in {\mathcal T}_n$,
\[
\hom(T_n,H) \le \hom(S_n,H).
\]
\end{theorem}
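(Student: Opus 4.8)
The plan is to prove the inequality by a \emph{compression} argument: starting from an arbitrary $T_n\in\mathcal T_n$, I will repeatedly apply a local surgery to the tree that never decreases the number of $H$-colorings, and show that this process can be driven all the way to $S_n$. The one ingredient is a leaf-peeling identity. If $w_1,w_2$ are two leaves of a tree $T$ whose neighbours $p_1,p_2$ are distinct, set $T_0:=T-\{w_1,w_2\}$; then (since $p_1\ne p_2$ forces $n\ge 4$ and $w_1,w_2,p_1,p_2$ pairwise distinct, with $T_0$ a tree on $n-2\ge 2$ vertices containing $p_1,p_2$) every $H$-coloring of $T$ arises from an $H$-coloring $g$ of $T_0$ by choosing $f(w_1)\in N_H(g(p_1))$ and $f(w_2)\in N_H(g(p_2))$ independently, so that
\[
\hom(T,H)\;=\;\sum_{g\in\Hom(T_0,H)} d_H(g(p_1))\,d_H(g(p_2)),
\]
where $d_H(v)=|N_H(v)|$ counts neighbours of $v$ in $H$ (a loop at $v$ putting $v\in N_H(v)$). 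Applied to $S_n$ this identity also recovers $\hom(S_n,H)=\sum_{v\in V(H)}d_H(v)^{\,n-1}$, though the compression argument will not need that formula explicitly.

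Now form two new trees on $n$ vertices from the same $T_0$: let $T^{(1)}$ be $T$ with $w_1$ moved from $p_1$ to $p_2$ (so $w_1,w_2$ are both pendant at $p_2$), and $T^{(2)}$ be $T$ with $w_2$ moved from $p_2$ to $p_1$. The identity above gives $\hom(T^{(1)},H)=\sum_g d_H(g(p_2))^2$ and $\hom(T^{(2)},H)=\sum_g d_H(g(p_1))^2$, so summing the pointwise bound $2xy\le x^2+y^2$ over $g\in\Hom(T_0,H)$ yields $2\hom(T,H)\le \hom(T^{(1)},H)+\hom(T^{(2)},H)$, hence $\hom(T,H)\le\max\{\hom(T^{(1)},H),\hom(T^{(2)},H)\}$. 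Pleasantly, this needs no hypothesis on $H$ whatsoever. Also, whenever $T\ne S_n$ such a pair of leaves exists (e.g.\ the endpoints of a longest path, which has length at least $3$), so as long as we are not already at a star we may replace $T$ by whichever of $T^{(1)},T^{(2)}$ has the most $H$-colorings and strictly cluster two leaves onto a common vertex without decreasing $\hom(\cdot,H)$.

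The step I expect to require the most care is organizing these moves so that the process provably terminates at $S_n$: the surgery is inherently two-sided—AM--GM tells us \emph{one} of $T^{(1)},T^{(2)}$ is at least as good as $T$, but not which—so no single monotone potential suffices. The fix is a lexicographic argument on the pair $\bigl(\hom(T,H),\sum_{v}d_T(v)^2\bigr)$, both of whose coordinates range over a finite set of integers for trees on $n$ vertices. A one-line degree computation gives $\sum_v d_{T^{(1)}}(v)^2+\sum_v d_{T^{(2)}}(v)^2 = 2\sum_v d_T(v)^2+4$, while $2\hom(T,H)\le\hom(T^{(1)},H)+\hom(T^{(2)},H)$ forces either (a) $\max\{\hom(T^{(1)},H),\hom(T^{(2)},H)\}>\hom(T,H)$, or (b) $\hom(T^{(1)},H)=\hom(T^{(2)},H)=\hom(T,H)$ and then one of $T^{(1)},T^{(2)}$ has $\sum_v d(v)^2$ at least $\sum_v d_T(v)^2+2$. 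Choosing the right candidate in each case makes the pair strictly increase in lexicographic order, so the process halts—necessarily at a tree with no two leaves having distinct neighbours. Such a tree has all its leaves adjacent to a common vertex $c$, and then taking a vertex of maximum distance from $c$ (which must itself be a leaf, hence adjacent to $c$) shows it is exactly $S_n$. Since $\hom(\cdot,H)$ never decreased along the way, $\hom(T_n,H)\le\hom(S_n,H)$.
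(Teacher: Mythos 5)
Your argument is correct and self-contained, but there is nothing in the paper to compare it against: \cref{thm:siderenko} is stated and attributed to Sidorenko~\cite{Sidorenko1994} with no proof reproduced, and the paper uses it only as a black box.

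On the merits of your proof: the leaf-peeling identity $\hom(T,H)=\sum_{g\in\Hom(T_0,H)}d_H(g(p_1))\,d_H(g(p_2))$ is correct (removing two leaves with distinct neighbours $p_1\ne p_2$ leaves both $p_1,p_2$ intact since neither can itself be a leaf, and extensions at $w_1,w_2$ are chosen independently), and the same computation gives $\hom(T^{(i)},H)$ as a sum of squares, so the pointwise AM--GM yields $2\hom(T,H)\le\hom(T^{(1)},H)+\hom(T^{(2)},H)$ with no hypothesis on $H$. You correctly anticipate that the delicate step is termination: $\hom(\cdot,H)$ alone is not a strictly increasing potential, since AM--GM only guarantees a weak inequality, and the lexicographic pair $\bigl(\hom(T,H),\sum_v d_T(v)^2\bigr)$ fixes this. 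The degree bookkeeping $\sum_v d_{T^{(1)}}(v)^2+\sum_v d_{T^{(2)}}(v)^2=2\sum_v d_T(v)^2+4$ is right, so in the tie case $\hom(T^{(1)},H)=\hom(T^{(2)},H)=\hom(T,H)$ one of the two candidates gains at least $2$ in the second coordinate, forcing strict lexicographic increase. Both coordinates take integer values in a finite range for trees on $n$ vertices, so the process halts, and your characterization of terminal trees (all leaves share a common neighbour $c$; a farthest vertex from $c$ is a leaf, hence adjacent to $c$, forcing radius $1$) correctly identifies the endpoint as $S_n$. One small remark: your ``leaf exchange'' move is philosophically close to, but simpler than, the KC move the paper uses for the path-minimization direction in \cref{thm:ordering_framework}; it would be worth stating explicitly that the move preserves the tree property (it does: deleting the pendant edge $w_1p_1$ isolates $w_1$, and reattaching at $p_2$ reconnects).
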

In other words, for any target graph $H$ and order $n$, not only $P_n$, but every tree $T_n$ admits at most as many $H$-colorings as $S_n$. Sidorenko's result entirely resolves the maximization question for trees (and, it can be shown, connected graphs).

Given the result of Prodinger and Tichy, one might ask whether paths \emph{minimize} the number of $H$-colorings; could one show a result analogous to Sidorenko's that $\hom(P_n,H) \le \hom(T_n,H)$ for all target graphs? Csikv\'{a}ri and Lin~\cite{CsikvariLin2014}, following earlier work of Leontovich~\cite{Leontovich1989}, dash such hopes by describing a target graph $H$ and a tree $E_7$ on seven vertices in which $\hom(E_7, H) < \hom(P_7, H)$. They raise the following natural problem~\cite[Problem 6.2]{CsikvariLin2014}:
\begin{problem} \label{prob:CL}
Characterize those $H$ for which $\hom(P_n,H) \le \hom(T_n,H)$ holds for all $n$ and all $T_n \in {\mathcal T}_n$.
\end{problem}
Inspired by~\cite{Hoffman1967, London1966}, we introduce the following definitions:
\begin{definition} \label{defn:HL}
We say a target graph $H$ is \emph{Hoffman-London} if for all $n \ge 1$,
\begin{equation} \label{eq:HL-defn}
\hom(P_n,H) = \min_{T_n \in \mathcal{T}_n} \hom(T_n,H)
\end{equation}
If it holds that $\hom(P_n, H) < \hom(T_n, H)$ for all $T_n \in \mathcal{T}_n \setminus \{P_n\}$ and all $n \ge 1$, we say that $H$ is \emph{strongly Hoffman-London}.
\end{definition}

We can thus restate \cref{prob:CL} as: Which target graphs $H$ are Hoffman-London? To make progress on this problem, we develop criteria for identifying Hoffman-London and strongly Hoffman-London target graphs based on concepts introduced in a companion paper~\cite{GalvinMcMillonNirRedlich2025}, which we restate here. Given a target graph $H$, possibly with loops, the \emph{orbit partition} of $H$ is the partition of $V(H)$ obtained by declaring a pair of vertices $u, v$ to be equivalent if there is an automorphism that maps $u$ to $v$. We say such vertices are \emph{automorphically similar}. Denote the equivalence classes of the orbit partition, which we call \emph{automorphic similarity classes}, by $H^1, \ldots, H^k$, where $k$ is the number of automorphism orbits of $V(H)$. We will show in \cref{lem:auto_sim_nbrs} that for each $1 \le i \le k$ and $1 \le j \le k$, there is a constant $m_{i,j}$ counting the number of neighbors any $v \in H^i$ has in $H^j$, independent of the specific choice of $v$. The \emph{automorphic similarity matrix} of $H$ (relative to the specific ordering of the automorphic similarity classes) is the $k \times k$ matrix $M = M(H)$ whose $(i,j)$-entry is $m_{i,j}$.

We also make use of the following matrix property, which is related to the idea of stochastic domination.
\begin{definition} \label{def:incr-columns}
A $k \times k$ matrix $M$ with $(i,j)$-entry $m_{i,j}$ has the {\it increasing columns property} if any terminal segment of columns is non-decreasing: for each $1 \le c \le k$ and each $1 \le i \le k-1$,
\begin{equation} \label{inq:inc_cols}
\sum_{j=c}^k m_{i,j} \le \sum_{j=c}^k m_{i+1,j}.
\end{equation}
\end{definition}

Our first contribution, captured by \cref{thm:ordering_framework}, is a test that can be applied to the automorphic similarity matrix of a graph $H$ which, when it applies, proves that $H$ is Hoffman-London. 
\begin{restatable}{theorem}{orderings} \label{thm:ordering_framework}
Let $H$ be a target graph, possibly with loops. Suppose there is an ordering of the automorphic similarity classes $H^1, \ldots, H^k$ of $H$ such that the automorphic similarity matrix $M$ has the increasing columns property. Then $H$ is Hoffman-London. 
\end{restatable}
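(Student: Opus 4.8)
The plan is to count $H$-colorings of a tree $T_n$ by tracking, for each vertex of $T_n$, the automorphic similarity class of its image. Root $T_n$ at a leaf and process vertices from the leaves up. Because the number of neighbors a vertex $v\in H^i$ has in $H^j$ is the constant $m_{i,j}$ independent of $v$ (this is \cref{lem:auto_sim_nbrs}), the number of ways to extend a partial coloring down one edge depends only on the similarity class assigned to the parent, not on the specific vertex. This means $\hom(T_n,H)$ can be computed by a transfer-type recursion governed entirely by the matrix $M$: associate to each rooted subtree a vector in $\mathbb{R}^k$ whose $i$th coordinate records the number of colorings of that subtree in which the root lands in class $H^i$, and observe that attaching a child replaces a child-vector $x=(x_1,\dots,x_k)$ by the vector $\big(\sum_j m_{1,j} x_j,\dots,\sum_j m_{k,j} x_j\big) = M x$ (entrywise in the parent index), while merging several children at a common parent multiplies these contributions coordinatewise. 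I would first set up this formalism carefully, establishing that $\hom(T_n,H) = \mathbf{1}^\top w(T_n)$ where $w$ is the resulting vector-valued invariant, and that $\hom(P_n,H)$ corresponds to the purely ``linear'' case where every internal vertex has exactly one child.

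The heart of the argument is then a local exchange/compression step: show that any tree that is not a path can be modified — by moving a pendant subtree or by a suitable leaf-swap — so that the count $\hom(\cdot,H)$ does not increase, and iterate until only $P_n$ remains. Concretely, I expect to compare two rooted subtrees that differ by relocating a branch closer to a leaf; after peeling off the common part, the comparison reduces to an inequality between two vectors obtained from a common base vector by applying $M$ and coordinatewise products in two different orders. This is exactly where the increasing columns property enters: condition~\eqref{inq:inc_cols} says that the rows of $M$ are monotone with respect to the ``upper-tail'' (stochastic-domination-style) order, which is designed to guarantee that if $x$ is coordinatewise sorted in a compatible way then $Mx$ is ``more concentrated toward the later classes,'' and hence that spreading mass into a branch rather than along the path can only lose colorings. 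I would phrase this as a lemma: if $M$ has the increasing columns property then for the relevant pairs of vectors arising in the recursion, the path configuration gives the coordinatewise-smaller (or smaller-sum) vector.

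The main obstacle, I anticipate, is choosing the right partial order on the weight vectors $w(T)$ and proving it is preserved by both operations in the recursion — applying $M$ (adding an edge) and taking coordinatewise products (merging children) — simultaneously with the base comparison that distinguishes a path-branch from a bushier branch. The increasing columns property is tailored to make ``$x \mapsto Mx$'' monotone in this order, but coordinatewise multiplication is the subtler operation, and I would need the order to be a lattice-type order (something like majorization-from-below combined with entrywise comparison on sorted vectors) that is closed under products; getting an order that is simultaneously coarse enough to be product-closed and fine enough to yield the final inequality $\hom(P_n,H)\le\hom(T_n,H)$ is the crux. Once that invariant is in place, the induction on $n$ (or on the number of ``excess'' branch vertices, leaves, or total path-deficiency of $T_n$) that reduces an arbitrary tree to $P_n$ should be routine. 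For the \emph{strongly} Hoffman-London refinement one would additionally track when the relevant matrix inequalities are strict, which I would defer.
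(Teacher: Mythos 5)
Your skeleton matches the paper's: the ``transfer-type recursion governed entirely by $M$'' is exactly the paper's \cref{lem:tree_walk}, the ``local exchange/compression'' is the paper's use of the KC move and the KC ordering of trees (Csikv\'ari's generalized tree shift, for which $P_n$ is the unique minimum), and \cref{lem:matrix_prop} is precisely the statement that $x \mapsto Mx$ preserves the relevant monotonicity. However, you have misidentified where the ``crux'' is, and the obstacle you are worried about is one the paper's argument never meets.

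You set yourself the task of finding a partial order $\preceq$ on the weight vectors $\mathbf{h}(T,v)$ that is (i) preserved by $M$, (ii) preserved by Hadamard products, and (iii) fine enough to yield $\mathbf{h}(P_n,v) \preceq \mathbf{h}(T_n,v)$ and hence $\hom(P_n,H)\le\hom(T_n,H)$. This amounts to a \emph{cross-tree} comparison of weight vectors, and it is not clear such an order exists — nor is it needed. The paper never compares $\mathbf{h}(P_n,v)$ to $\mathbf{h}(T_n,v)$. Instead, given a tree $T$ and the result $T^{KC}$ of a single KC move on the bare path between $v_\ell$ and $v_r$, it writes both $\hom(T,H)=\sum_{i,j}\ell_ir_j p^t_{i,j}$ and $\hom(T^{KC},H)=\sum_{i,j}\ell_ir_i p^t_{i,j}$ in terms of the \emph{same} parameters $\ell_i$ (colorings of the left component), $r_j$ (right component), and $p^t_{i,j}$ (colorings of the bare path), and the difference collapses to
\[
\hom(T^{KC},H)-\hom(T,H)=\sum_{i<j}(\ell_j-\ell_i)(r_j-r_i)\,p^t_{i,j}.
\]
This identity is the missing idea in your proposal. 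Once you have it, the only fact you need is a \emph{within-vector} monotonicity: for any rooted tree $T$ and any $v$, the coordinates of $\mathbf{h}(T,v)$ are non-decreasing (so each factor $\ell_j-\ell_i$ and $r_j-r_i$ is $\ge 0$ when $i<j$). That property is trivially true for the all-ones base case, trivially preserved by Hadamard products of non-negative non-decreasing vectors, and preserved by $M$ exactly because of the increasing columns property (\cref{lem:matrix_prop}). So the property you want is an invariant of each individual vector — ``sorted'' — not a comparison order between the vectors of different trees, and it is already closed under $\odot$ with no delicate balancing required. Without the algebraic identity above, your route stalls at the cross-tree comparison, which is a genuine gap; with it, the rest of your outline (Lemma~\ref{lem:tree_walk}, Lemma~\ref{lem:matrix_prop}, iterate KC moves to reach $P_n$) goes through as you sketched.
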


In certain cases, the reasoning behind \cref{thm:ordering_framework} can be extended to show $H$ is strongly Hoffman-London. For a graph $G$ with designated vertex $v$, let $\Homclass{v}{i}{G}{H}$ denote the set of $H$-colorings of $G$ in which $v$ is mapped to some (arbitrary but specific) representative of $H^i$. We use $\homclass{v}{i}{G}{H}$ for the number of such $H$-colorings. Although $\Homclass{v}{i}{G}{H}$ clearly depends on the choice of representative, in \cref{lem:using_iso} we will show that $\homclass{v}{i}{G}{H}$ does not.

\begin{restatable}{corollary}{uniquenessalt} \label{cor:uniquenessalt}
Let $H$ be a target graph that satisfies the hypotheses of \cref{thm:ordering_framework}. Suppose that for each $t \ge 2$, there exist two classes, $H^{a(t)}$ and $H^{b(t)}$, such that there is at least one $H$-coloring of $P_t$ that sends one endvertex of $P_t$ to a vertex in $H^{a(t)}$ and the other endvertex to a vertex in $H^{b(t)}$. Suppose further that for all $s \ge 2$, it holds that $\homclass{v}{b(t)}{P_s}{H} > \homclass{v}{a(t)}{P_s}{H}$, where $v$ is one of the end vertices of $P_s$. Then $H$ is strongly Hoffman-London. 
\end{restatable}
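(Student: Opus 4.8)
The plan is to retrace the proof of \cref{thm:ordering_framework} and to isolate, in the reduction of an arbitrary non-path tree to the path, one step where the governing inequality is strict. First I would recall the transfer-matrix bookkeeping behind \cref{thm:ordering_framework}. For a rooted tree $(G,v)$ set $\mathbf{h}(G,v)=(\homclass{v}{1}{G}{H},\dots,\homclass{v}{k}{G}{H})^{\trans}$; then attaching a pendant edge at the root multiplies $\mathbf{h}$ by $M$, gluing rooted trees at a shared root takes the coordinatewise product of their $\mathbf{h}$-vectors, and $\hom(G,H)=\mathbf{w}^{\trans}\mathbf{h}(G,v)$, where $\mathbf{w}=(|H^1|,\dots,|H^k|)^{\trans}$. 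The increasing-columns hypothesis is used only through the fact (an Abel-summation computation) that $M$ sends non-decreasing non-negative vectors to non-decreasing non-negative vectors, so that $\mathbf{h}(G,v)$ is always non-decreasing and non-negative; one also uses that $\diag(\mathbf{w})M^{a}$ is symmetric for every $a\ge0$ because $|H^i|m_{i,j}=|H^j|m_{j,i}$. The engine of the argument is a \emph{pendant-path merge}: if a branch vertex $w$ of a tree $T$ carries two pendant paths with $\alpha$ and $\beta$ edges, and $T'$ is obtained by detaching the first and reattaching it at the far end of the second (so $\deg w$ drops by one), then, writing $\mathbf{r}$ for the coordinatewise product over the \emph{other} branches at $w$ of the vectors $M\,\mathbf{h}(\cdot)$ and $\mathbf{a}^{\beta}:=M^{\beta}\mathbf{1}$ (so $\mathbf{a}^{\beta}_{i}=\homclass{v}{i}{P_{\beta+1}}{H}$),
\[
\hom(T,H)-\hom(T',H)=\frac12\sum_{i,j=1}^{k}|H^i|\,(M^{\alpha})_{i,j}\,(r_i-r_j)\,(\mathbf{a}^{\beta}_{i}-\mathbf{a}^{\beta}_{j});
\]
here the symmetrized form comes from $|H^i|(M^{\alpha})_{i,j}=|H^j|(M^{\alpha})_{j,i}$, and every summand is $\ge0$ since $(M^{\alpha})_{i,j}\ge0$ while $\mathbf{r}$ and $\mathbf{a}^{\beta}$ are both non-decreasing, so $(r_i-r_j)$ and $(\mathbf{a}^{\beta}_{i}-\mathbf{a}^{\beta}_{j})$ share the sign of $i-j$. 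Iterating merges carries any tree on $n$ vertices to $P_n$, which proves \cref{thm:ordering_framework}.

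Granting this, here is how I would deduce the corollary. Fix $n\ge4$ (for $n\le3$ the only tree is $P_n$, so there is nothing to prove) and a tree $T_n\in\mathcal{T}_n\setminus\{P_n\}$, and run pendant-path merges until reaching $P_n$. Each merge decreases $\sum_{u}(\deg u-2)$, taken over branch vertices, by exactly one, so the penultimate tree $\widehat{T}$ in this sequence has a single branch vertex, of degree $3$: it is the spider with three legs, of lengths $\alpha,\beta,\gamma\ge1$, and in the final merge the ``other branch'' is the third leg---a genuine pendant path---so there $\mathbf{r}=M^{\gamma}\mathbf{1}$ with $r_i=\homclass{v}{i}{P_{\gamma+1}}{H}$. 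Since all merges from $T_n$ down to $\widehat{T}$ are non-strict, $\hom(T_n,H)\ge\hom(\widehat{T},H)$, so it suffices to show the displayed identity is strict for the last merge, i.e.\ that one summand is positive. I would apply the first hypothesis of the corollary with $t=\alpha+1$: this gives classes $a:=a(\alpha+1)$ and $b:=b(\alpha+1)$ together with an $H$-coloring of $P_{\alpha+1}$ whose two endvertices lie in $H^{a}$ and $H^{b}$, and since $(M^{\alpha})_{a,b}$ counts exactly the $H$-colorings of $P_{\alpha+1}$ that place a fixed representative of $H^{a}$ at one end and some vertex of $H^{b}$ at the other, this forces $(M^{\alpha})_{a,b}>0$. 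Applying the second hypothesis with $s=\beta+1$ and with $s=\gamma+1$ (both at least $2$) gives $\homclass{v}{b}{P_{\beta+1}}{H}>\homclass{v}{a}{P_{\beta+1}}{H}$ and $\homclass{v}{b}{P_{\gamma+1}}{H}>\homclass{v}{a}{P_{\gamma+1}}{H}$, i.e.\ $\mathbf{a}^{\beta}_{a}\ne\mathbf{a}^{\beta}_{b}$ and $r_a\ne r_b$. Hence the summand indexed by $(i,j)=(a,b)$, together with its mirror $(b,a)$, is strictly positive while every other summand is non-negative; so $\hom(\widehat{T},H)>\hom(P_n,H)$ and therefore $\hom(T_n,H)>\hom(P_n,H)$, which is what \cref{cor:uniquenessalt} asserts.

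The part I expect to be the main obstacle is not any single computation but making the argument \emph{uniform} over all non-path trees while keeping strictness at a step I can control. The decisive structural observation is that, whatever order the merges are carried out in, the second-to-last tree is always a three-legged spider; this is what makes $\mathbf{r}$ itself a genuine path vector $M^{\gamma}\mathbf{1}$, indexed by a bona fide path length $\gamma+1\ge2$, so that the two hypotheses---quantified over all path lengths $s\ge2$ and all merged-leg lengths $t\ge2$---can be invoked simultaneously on $\mathbf{a}^{\beta}$, on $\mathbf{r}$, and on $(M^{\alpha})_{a,b}$. A secondary subtlety worth stating carefully is the sign bookkeeping: one must check that $\mathbf{r}$ and $\mathbf{a}^{\beta}$ are non-decreasing \emph{in the same ordering of the similarity classes} for which $M$ has the increasing-columns property, which is exactly the monotonicity lemma underpinning \cref{thm:ordering_framework}; this guarantees that $(r_a-r_b)$ and $(\mathbf{a}^{\beta}_{a}-\mathbf{a}^{\beta}_{b})$ cannot have opposite signs and that no cancellation against the other (non-negative) summands can occur. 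The remaining points---that every non-path tree admits at least one pendant-path merge, and that the statement is vacuous for $n\le3$---are routine.
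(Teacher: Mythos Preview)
Your argument is correct, and its logical core coincides with the paper's: both proofs isolate the transition between $P_n$ and a three-legged spider as the one step where the KC-type inequality is forced to be strict, because at that step all three ``pieces'' are genuine paths and the hypotheses on $\homclass{v}{a(t)}{P_s}{H}$ versus $\homclass{v}{b(t)}{P_s}{H}$ apply directly. The packaging differs. The paper works upward from $P_n$ in the KC order, analysing the first KC move $P_n\mapsto P_n^{KC}$ (which always yields a three-legged spider) and then invoking the non-strict inequality for subsequent moves; you instead work downward from $T_n$ via pendant-path merges and analyse the last merge. Your symmetrized identity is exactly the paper's expression $\sum_{i<j}(\ell_j-\ell_i)(r_j-r_i)\,p^t_{i,j}$ once one notes $p^t_{i,j}=|H^i|(M^{t-1})_{i,j}$ and matches the spider's three legs with $L$, $R$, and the bare path. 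The trade-off is that the paper's version is shorter because it cites the KC ordering as a black box (so there is nothing to check about which moves are available), whereas your version is more self-contained but relies on the routine fact that every non-path tree has a branch vertex carrying at least two pendant paths; you rightly flag this as the main structural point to verify.
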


In \cref{sec:ordering_applications} we apply \cref{thm:ordering_framework} to prove (and in some cases, reprove known results) that several families of target graphs are Hoffman-London. These examples include vertex transitive graphs to which looped dominating vertices are added (partially recovering a result of Engbers and the first author~\cite{EngbersGalvin2017}), paths (partially recovering a result of Csikv\'ari and Lin~\cite{CsikvariLin2015}) and fully looped paths, loop threshold graphs, blow-ups of fully looped stars, and families of graphs (including the Widom-Rowlinson model) originating from statistical physics. When possible, we also apply \cref{cor:uniquenessalt} to show that these graphs are strongly Hoffman-London.

\section{Preliminary observations} \label{sec:prelim_obsv}

In this section, we gather some observations about enumerating $H$-colorings of trees that will prove useful at various points throughout the paper. We also present a proof that complete bipartite graphs are Hoffman-London. 

We begin with two observations pertaining to target graphs with more than one component. Both results follow immediately from the definition of $H$-coloring.

\begin{observation} \label{obs:union_of_comp}
If $H$ has components $H_1, \ldots, H_k$, then for any connected graph $G$ we have
\[\hom(G,H) = \sum_{i=1}^k \hom(G,H_i).\]
In particular, if it holds that $\hom(P_n,H_i) \le \hom(T_n,H_i)$ for all $i$, $n$, and $T_n \in {\mathcal T}_n$, then it holds that $\hom(P_n,H) \le \hom(T_n,H)$ for all $n$ and $T_n \in {\mathcal T}_n$.
\end{observation}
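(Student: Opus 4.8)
The plan is to observe that since $G$ is connected, the image of any $H$-coloring of $G$ must be confined to a single component of $H$, and then to classify the $H$-colorings according to which component they land in. First I would recall the standard fact that a graph homomorphism maps walks to walks: if $f\colon V(G)\to V(H)$ preserves adjacency and $u_0u_1\cdots u_\ell$ is a walk in $G$, then $f(u_0)f(u_1)\cdots f(u_\ell)$ is a walk in $H$. Fixing any vertex $w$ of $G$, every vertex of $G$ is joined to $w$ by a walk (as $G$ is connected), so every vertex of $G$ is mapped by $f$ into the component of $H$ containing $f(w)$. Hence there is a well-defined index $i=i(f)\in\{1,\ldots,k\}$ with $f(V(G))\subseteq V(H_i)$.

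Next I would set up the bijection. For each $i$, let $\Hom_i(G,H)$ be the set of $H$-colorings $f$ of $G$ with $i(f)=i$. Since the $H_i$ are vertex-disjoint, the sets $\Hom_i(G,H)$ partition $\Hom(G,H)$, so $\hom(G,H)=\sum_{i=1}^k|\Hom_i(G,H)|$. Because each $H_i$ is an induced subgraph of $H$ whose vertex set carries exactly the edges of $H$ among those vertices, a map $f\colon V(G)\to V(H_i)$ preserves $G$--$H$ adjacency if and only if it preserves $G$--$H_i$ adjacency; composing with the inclusion $V(H_i)\hookrightarrow V(H)$ therefore gives a bijection between $\Hom(G,H_i)$ and $\Hom_i(G,H)$. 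Thus $|\Hom_i(G,H)|=\hom(G,H_i)$, and summing over $i$ yields $\hom(G,H)=\sum_{i=1}^k\hom(G,H_i)$.

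Finally, the ``in particular'' clause is immediate: applying the identity to $P_n$ and to each $T_n$ (all of which are connected), if $\hom(P_n,H_i)\le\hom(T_n,H_i)$ for every $i$ then $\hom(P_n,H)=\sum_i\hom(P_n,H_i)\le\sum_i\hom(T_n,H_i)=\hom(T_n,H)$. There is no genuine obstacle in this proof; the only point requiring a word of care is the observation that the homomorphic image of a connected graph is connected, which is exactly what forces each $H$-coloring of a connected $G$ into a single component of $H$.
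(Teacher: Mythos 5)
Your argument is correct and is exactly the reasoning the paper has in mind; the paper simply states that the observation ``follows immediately from the definition of $H$-coloring'' without spelling out the details, and your write-up (connected image forces the coloring into a single component, then partition $\Hom(G,H)$ by component) is the standard justification.
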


That is, the set of graphs that are Hoffman-London is closed under taking disjoint unions. The second observation follows from the first, but is useful on its own when considering graphs with isolated vertices.

\begin{observation} \label{obs:isolated}
If $H$ has isolated vertices and $H'$ is obtained from $H$ by removing the isolated vertices, then for each $G$ without isolated vertices we have $\hom(G,H)=\hom(G,H')$. In particular, this means that for the purpose of minimizing $H$-colorings of trees on two or more vertices, we can ignore isolated vertices. 
\end{observation}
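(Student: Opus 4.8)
The plan is to argue directly from the definition of an $H$-coloring that, when $G$ has no isolated vertices, no vertex of $G$ can be mapped to an isolated vertex of $H$. First I would fix $f \in \Hom(G,H)$ and an arbitrary vertex $u \in V(G)$. Since $G$ has no isolated vertices, $u$ has a neighbor $w$, and adjacency preservation forces $f(u) \sim_H f(w)$; in particular $f(u)$ has a neighbor in $H$, so $f(u)$ is not among the isolated vertices removed to form $H'$. Hence the image of every $f \in \Hom(G,H)$ lies in $V(H')$, so $f$ is already an $H'$-coloring of $G$. Conversely, $H'$ is an induced subgraph of $H$, so every $H'$-coloring of $G$ is an $H$-coloring of $G$. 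This gives an equality of sets $\Hom(G,H) = \Hom(G,H')$, and therefore $\hom(G,H)=\hom(G,H')$.

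An alternative route, and the one that makes good on the claim that this ``follows from'' \cref{obs:union_of_comp}, is to decompose $G$ into its connected components $G_1,\dots,G_m$. Since $G$ has no isolated vertices, each $G_j$ has at least two vertices and hence at least one edge. Using multiplicativity of $\hom(\cdot,H)$ over the components of $G$ together with \cref{obs:union_of_comp} applied to each connected $G_j$, the components of $H$ that are single unlooped vertices contribute $0$ to $\hom(G_j,H)$ (a graph with an edge has no homomorphism into an edgeless one-vertex graph), so $\hom(G_j,H)=\hom(G_j,H')$; multiplying over $j$ yields the result. I would present the direct argument of the first paragraph as the main proof, since it handles disconnected $G$ uniformly.

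For the ``in particular'' clause, I would observe that every tree on two or more vertices — in particular $P_n$ for $n\ge 2$ — has no isolated vertices, so the identity $\hom(T_n,H)=\hom(T_n,H')$ holds simultaneously for all $T_n \in \mathcal{T}_n$. Consequently $\min_{T_n\in\mathcal{T}_n}\hom(T_n,H)=\min_{T_n\in\mathcal{T}_n}\hom(T_n,H')$ and $\hom(P_n,H)=\hom(P_n,H')$, so $H$ is Hoffman-London if and only if $H'$ is.

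There is essentially no obstacle here: the entire content is the image-avoidance observation, which is immediate from the definition of adjacency preservation. The only point requiring a moment's care is that the statement permits $G$ to be disconnected, so one should either give the direct argument above or note explicitly that \cref{obs:union_of_comp} must be applied component-by-component before invoking multiplicativity.
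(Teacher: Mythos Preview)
Your proposal is correct and aligns with the paper's treatment: the paper offers no formal proof, simply remarking that the observation follows immediately from the definition of $H$-coloring and also follows from \cref{obs:union_of_comp}, and you supply both of these arguments explicitly. The only nitpick is that $\Hom(G,H)$ and $\Hom(G,H')$ are not literally equal as sets (the codomains differ), but the obvious identification you describe is a bijection, which is all that is needed.
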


We now make an observation about regular target graphs. Note that here and throughout this paper, the degree of a vertex $v$ is the number of edges (including loops) that contain $v$---so loops contribute $1$ to the degree, not $2$, as is used elsewhere in the literature. 

\begin{observation} \label{obs:regular}
If $H$ is regular, then all trees on $n$ vertices admit the same number of $H$-colorings. Specifically, $\hom(T,H) = |V(H)|d^{n-1}$, where $d$ is the degree of each vertex in $H$.
\end{observation}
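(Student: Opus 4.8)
The statement to prove is \cref{obs:regular}: if $H$ is $d$-regular, then every tree $T$ on $n$ vertices satisfies $\hom(T,H) = |V(H)|d^{n-1}$.

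The plan is to count $H$-colorings of a tree by building them up one vertex at a time along a rooted structure, exploiting that a tree on $n$ vertices has exactly $n-1$ edges and that each vertex (other than the root) has a unique parent.

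\begin{proof}[Proof plan]
Root $T$ at an arbitrary vertex $r$, and order the vertices $v_1 = r, v_2, \ldots, v_n$ so that every $v_i$ with $i \ge 2$ has its (unique) parent $v_{p(i)}$ appearing earlier in the order, i.e. $p(i) < i$; such an order exists because $T$ is a tree (take, e.g., a breadth- or depth-first search order from $r$). I would then count $H$-colorings of $T$ by revealing the images $f(v_1), f(v_2), \ldots, f(v_n)$ in this order. There are $|V(H)|$ choices for $f(v_1)$. For each $i \ge 2$, once $f(v_{p(i)})$ has been fixed, the only constraint on $f(v_i)$ coming from edges among $v_1, \ldots, v_i$ is the single edge $v_i v_{p(i)}$ (all other edges of $T$ incident to $v_i$ go to vertices $v_j$ with $j > i$, since $v_i$'s only earlier neighbor is its parent). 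Hence $f(v_i)$ may be any neighbor of $f(v_{p(i)})$ in $H$, and because $H$ is $d$-regular, there are exactly $d$ such choices, independent of the value of $f(v_{p(i)})$. Multiplying the number of choices at each step gives $\hom(T,H) = |V(H)| \cdot d^{n-1}$, as claimed. Since this count does not depend on $T$, all trees on $n$ vertices admit the same number of $H$-colorings.
\end{proof}

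The only subtle point is the claim that when we reveal $f(v_i)$, the edge to its parent is the \emph{only} already-determined constraint; this is exactly where the tree structure (each non-root vertex has a unique parent, and the remaining incident edges point ``forward'') is used, and it is also where one must be careful that loops are handled correctly — if $f(v_{p(i)})$ is a looped vertex then it is counted among its own $d$ neighbors, which is consistent with the convention (stated just before the observation) that a loop contributes $1$ to the degree. I do not expect any real obstacle here; the argument is a routine induction on $n$ dressed up as a direct product-rule count, and the regularity hypothesis is precisely what makes the per-step factor a constant $d$ rather than depending on the partial coloring.
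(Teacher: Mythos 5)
Your proof is correct and is essentially identical to the paper's argument: both root the tree, choose a BFS/DFS ordering in which each non-root vertex has exactly one earlier neighbor (its parent), and then count $|V(H)|$ choices for the root and $d$ choices for each of the remaining $n-1$ vertices, using regularity to make the per-step factor constant. Your extra remark about loops contributing $1$ to the degree matches the paper's stated convention and is a sensible check, but does not change the argument.
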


It follows that regular graphs are, trivially, Hoffman-London. To see the validity of \cref{obs:regular}, put an ordering $v_1, \ldots, v_n$ on the vertices of $T$ satisfying that for each $i \ge 2$, $v_i$ is adjacent to exactly one $v_j$, $j < i$. For example, start with an arbitrary vertex as $v_1$, then list all the vertices at distance $1$ from $v_1$, in some arbitrary order, then move on to all the vertices at distance $2$ from $v_1$, and so on. When $H$-coloring the vertices of $T$ in this order, there are $|V(H)|$ options for the color at $v_1$, then $d$ options for the color of each successive vertex. 

Next, we refer to the notion of the tensor product of two graphs. For graphs $H_1$ and $H_2$, the graph $H_1 \times H_2$ has vertex set $V(H_1) \times V(H_2)$, and pairs $(x_1,y_1)$, $(x_2,y_2)$ are adjacent in $H_1 \times H_2$ if and only if $x_1x_2 \in E(H_1)$ and $y_1y_2 \in E(H_2)$; see \cref{fig:tensor}. 
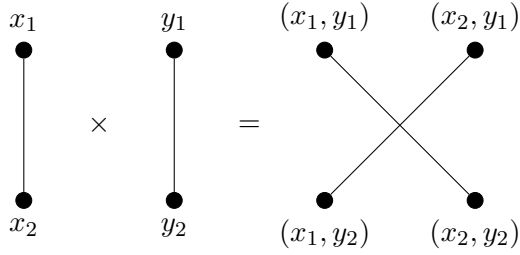
\begin{figure}[ht]
    \centering
    \begin{tikzpicture}
        \begin{scope}[xshift = -1cm]
            \node[circle,draw,fill,label={$x_1$},minimum size=6pt, inner sep=0pt] (a) at (0,2) {};
            \node[circle,draw,fill,label=below:{$x_2$},minimum size=6pt, inner sep=0pt] (b) at (0,0) {};
            \draw (a) -- (b);
        \end{scope}
        \node at (0,1) {$\times$};
        \begin{scope}[xshift = 1cm]
            \node[circle,draw,fill,label={$y_1$},minimum size=6pt, inner sep=0pt] (a) at (0,2) {};
            \node[circle,draw,fill,label=below:{$y_2$},minimum size=6pt, inner sep=0pt] (b) at (0,0) {};
            \draw (a) -- (b);
        \end{scope}
        \node at (2,1) {$=$};
        \begin{scope}[xshift = 3cm]
            \node[circle,draw,fill,label=above:{$(x_1,y_1)$},minimum size=6pt, inner sep=0pt] (a) at (0,2) {};
            \node[circle,draw,fill,label=below:{$(x_1,y_2)$},minimum size=6pt, inner sep=0pt] (b) at (0,0) {};
            \node[circle,draw,fill,label=above:{$(x_2,y_1)$},minimum size=6pt, inner sep=0pt] (c) at (2,2) {};
            \node[circle,draw,fill,label=below:{$(x_2,y_2)$},minimum size=6pt, inner sep=0pt] (d) at (2,0) {};
        
            \draw (a) -- (d);
            \draw (b) -- (c);
        \end{scope}
    \end{tikzpicture}
    \caption{The tensor product $K_2 \times K_2$.} \label{fig:tensor}
\end{figure}

There is a strong connection between tensor products and $H$-colorings, namely that for any graphs $G$, $H_1$, and $H_2$ we have 
\[
\hom(G,H_1 \times H_2) = \hom(G,H_1)\hom(G,H_2). 
\]
(see e.g.~\cite[Equation (5.30)]{Lovasz2012}). From this fact we draw the following two conclusions:
\begin{observation} \label{obs:cartesian_1}
If $H_1, H_2$ satisfy
$\hom(P_n,H_i) \le \hom(T_n,H_i)$ for each $i\in\{1,2\}$, $n \ge 1$, and $T_n \in {\mathcal T}_n$, then 
\[
\hom(P_n,H_1\times H_2) \le \hom(T_n,H_1 \times H_2)
\] 
for all $n$ and $T_n \in {\mathcal T}_n$.
\end{observation}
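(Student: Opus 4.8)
The plan is to reduce this to the multiplicativity of $\hom(\cdot,\cdot)$ over tensor products, which has just been recorded: for any graph $G$ we have $\hom(G,H_1\times H_2)=\hom(G,H_1)\hom(G,H_2)$. Applying this identity with $G=P_n$ and with $G=T_n$ converts the target inequality into
\[
\hom(P_n,H_1)\hom(P_n,H_2)\ \le\ \hom(T_n,H_1)\hom(T_n,H_2).
\]

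Now I would invoke the hypothesis twice, once for each factor: $\hom(P_n,H_1)\le\hom(T_n,H_1)$ and $\hom(P_n,H_2)\le\hom(T_n,H_2)$. Since each of the four quantities is a count of homomorphisms and hence a nonnegative integer, the product of the two left-hand sides is at most the product of the two right-hand sides (multiplying inequalities between nonnegative reals preserves the order). This yields exactly the displayed inequality above, and undoing the multiplicativity identity gives $\hom(P_n,H_1\times H_2)\le\hom(T_n,H_1\times H_2)$, as required, for every $n$ and every $T_n\in\mathcal{T}_n$.

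There is essentially no obstacle here; the only point that merits a word of care is the nonnegativity of all terms, which is what licenses the termwise multiplication of the two inequalities. Everything else is a direct substitution into the tensor-product identity cited from~\cite{Lovasz2012}.
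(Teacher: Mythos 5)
Your proof is correct and matches the paper's intent exactly: the paper records the tensor-product identity $\hom(G,H_1\times H_2)=\hom(G,H_1)\hom(G,H_2)$ and states the observation as an immediate consequence, which is precisely the reduction you carry out by multiplying the two nonnegative inequalities.
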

That is, the set of graphs that are Hoffman-London is closed under taking tensor products.

\begin{observation} \label{obs:cartesian_2}
Let $R$ be a regular graph of degree $d$ with at least one edge. Then for any $H$ and any $n \ge 1$, the trees $T_n$ in ${\mathcal T_n}$ that minimize $\hom(T_n,H)$ are the same as the trees $T_n$ in ${\mathcal T_n}$ that minimize $\hom(T_n,H \times R)$.
\end{observation}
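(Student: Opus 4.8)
The plan is to reduce everything to two facts already established above: the multiplicativity identity $\hom(G, H_1 \times H_2) = \hom(G, H_1)\hom(G, H_2)$, and \cref{obs:regular}, which pins down $\hom(T_n, R)$ exactly. First I would observe that since $R$ is $d$-regular and has at least one edge, some vertex of $R$ has positive degree, so $d \ge 1$; consequently $|V(R)|\,d^{n-1}$ is a strictly positive integer for every $n \ge 1$. By \cref{obs:regular}, $\hom(T_n, R) = |V(R)|\,d^{n-1}$ for every $T_n \in \mathcal{T}_n$, so this value is the same for all trees on $n$ vertices.

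Next I would apply the tensor product identity with $G = T_n$, $H_1 = H$, and $H_2 = R$ to get
\[
\hom(T_n, H \times R) = \hom(T_n, H)\cdot \hom(T_n, R) = \bigl(|V(R)|\,d^{n-1}\bigr)\,\hom(T_n, H).
\]
Thus, among trees on a fixed number $n$ of vertices, $\hom(T_n, H \times R)$ is a fixed positive scalar multiple of $\hom(T_n, H)$, with the scalar independent of which tree is chosen.

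Finally, since multiplication by a fixed positive constant is an order-preserving bijection on the nonnegative reals, it preserves the relative order of the values $\{\hom(T_n, H) : T_n \in \mathcal{T}_n\}$, and in particular preserves the set of minimizers. Hence a tree $T_n$ minimizes $\hom(T_n, H)$ over $\mathcal{T}_n$ if and only if it minimizes $\hom(T_n, H \times R)$ over $\mathcal{T}_n$, which is the claim.

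There is no genuine obstacle here; the only point deserving care is the verification that $d \ge 1$, which is exactly what the hypothesis ``$R$ has at least one edge'' buys us. Without it one could have $d = 0$, the scalar would vanish, $H \times R$ would be edgeless, and the conclusion would fail, so this hypothesis cannot be dropped.
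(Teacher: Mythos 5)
Your proof is correct and follows essentially the same route as the paper's: the paper also justifies this observation by combining the multiplicativity $\hom(T_n, H \times R) = \hom(T_n,H)\hom(T_n,R)$ with \cref{obs:regular} to see that $\hom(T_n, R) = |V(R)|d^{n-1}$ is a positive constant independent of $T_n$. You have merely spelled out the positivity of the constant (and why the ``at least one edge'' hypothesis is needed) a bit more explicitly than the paper does.
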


\cref{obs:cartesian_2} follows from \cref{obs:cartesian_1} and from the fact that $\hom(T_n,R)=|V(R)|d^{n-1}$ (see \cref{obs:regular}) and is thus positive and independent of the specific choice of $T_n \in {\mathcal T}_n$. 

Finally, we present a result whose justification is more involved. A bipartition $V(G) = X \cup Y$ of a graph is called \emph{balanced} if the sizes are as equal as possible, i.e. $|X|= |Y|$ if $|V(G)|$ is even and $\bigl||X|-|Y|\bigr| = 1$ if $|V(G)|$ is odd.
\begin{theorem}\label{thm:complete_bipartite}
Each complete bipartite graph $H=K_{a,b}$ is Hoffman-London. When $a \ne b$, $T_n \in {\mathcal T}_n$ minimizes $\hom(T_n,H)$ if and only if $T_n$ has a balanced bipartition.
\end{theorem}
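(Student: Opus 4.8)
The plan is to reduce $\hom(T, K_{a,b})$ to a quantity depending only on $n$ and the sizes of the two sides of the (necessarily unique) bipartition of the tree $T$, and then to analyze this as a function of those sizes. I would begin by disposing of degenerate cases: for $n = 1$ there is a unique tree and nothing to prove, and we may assume $a \ge b \ge 1$. If $a = b$, then $K_{a,a}$ is $a$-regular, so it is trivially Hoffman-London by \cref{obs:regular} (and the second assertion is vacuous, since there $a \ne b$ fails). Henceforth assume $n \ge 2$ and $a > b \ge 1$.

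The key structural step is the identity
\[
\hom(T, K_{a,b}) = a^{|X|}b^{|Y|} + a^{|Y|}b^{|X|}
\]
for any tree $T$ on $n \ge 2$ vertices with bipartition $(X, Y)$. To see this, write $A, B$ for the two sides of $K_{a,b}$, with $|A| = a$ and $|B| = b$. Given any $H$-coloring $f$ of $T$, every edge of $K_{a,b}$ joins $A$ to $B$, so $\{f^{-1}(A), f^{-1}(B)\}$ is a bipartition of $T$; since $T$ is connected, its bipartition is unique up to a swap, forcing $\{f^{-1}(A), f^{-1}(B)\} = \{X, Y\}$. Conversely, since $K_{a,b}$ is complete bipartite, every map sending $X$ into $A$ and $Y$ into $B$ (or $X$ into $B$ and $Y$ into $A$) is automatically an $H$-coloring, as each edge of $T$, joining $X$ to $Y$, maps to a pair in $A \times B$, which is an edge. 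The two options contribute $a^{|X|}b^{|Y|}$ and $a^{|Y|}b^{|X|}$ colorings respectively, and they are disjoint because $X \ne Y$. Writing $p = \max(|X|,|Y|)$ and $q = n - p$, we have $p \in \{\lceil n/2\rceil, \dots, n-1\}$ and $\hom(T, K_{a,b}) = g(p)$, where $g(p) := a^{p}b^{q} + a^{q}b^{p}$; in particular $\hom(T,K_{a,b})$ is determined by $p$ alone.

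It remains to show $g$ is strictly increasing on $\{\lceil n/2\rceil, \dots, n-1\}$. Putting $r = a/b > 1$ gives $g(p) = b^{n}(r^{p} + r^{n-p})$, and the real function $x \mapsto r^{x} + r^{n-x}$ is strictly convex and symmetric about $x = n/2$, hence strictly increasing on $[n/2, \infty)$; since every admissible $p$ satisfies $p \ge \lceil n/2 \rceil \ge n/2$, strict monotonicity on the relevant integers follows. (Equivalently, a direct computation gives $g(p+1) - g(p) = (a-b)(ab)^{n-p-1}(a^{2p-n+1} - b^{2p-n+1}) > 0$ for $\lceil n/2 \rceil \le p \le n-2$.) Thus $\min_{T_n \in \mathcal{T}_n} \hom(T_n, K_{a,b}) = g(\lceil n/2 \rceil)$, attained exactly by those trees with a balanced bipartition; and since $P_n$ has bipartition sizes $\lceil n/2\rceil$ and $\lfloor n/2 \rfloor$, it is among the minimizers, so $K_{a,b}$ is Hoffman-London. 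When $a \ne b$, the minimizers are precisely the trees with a balanced bipartition.

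I do not expect a real obstacle here: the only step requiring care is the structural identity — in particular, using connectedness of $T$ to pin down which vertices of $T$ land in $A$ versus $B$, and checking that the two resulting families of colorings are disjoint — after which everything reduces to the elementary fact that, for $r > 1$, the quantity $r^{p} + r^{n-p}$ is minimized when $p$ and $n-p$ are as equal as possible.
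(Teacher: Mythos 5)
Your proof is correct and takes essentially the same approach as the paper: both derive the identity $\hom(T,K_{a,b}) = a^{|X|}b^{|Y|} + a^{|Y|}b^{|X|}$ from the uniqueness of the bipartition of a tree and the completeness of $K_{a,b}$, then analyze the resulting function of the part sizes to show it is minimized when the bipartition is balanced. The paper does the analysis with calculus (computing $f'$ and $f''$); you use convexity and symmetry of $r^x + r^{n-x}$ (or a direct telescoping difference), which is a mild and equally valid variant.
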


\begin{proof}
If $a=b$, then $H$ is a regular graph and \cref{obs:regular} tells us that every tree admits the same number of $H$-colorings. For the remainder of the proof we assume that $a \ne b$.

Any tree $T_n$ on $n$ vertices has a unique bipartition, say $V(T) = X \cup Y$, with $|X| = k$ and $|Y| = n-k$. Let $A$ be the part of $H$ containing $a$ vertices and $B$ be the part containing $b$ vertices. Let $\varphi \in \Hom(T_n,K_{a,b})$ and $x \in X$. Then, as $T_n$ is connected, each vertex in $X$ is adjacent to some vertex in $Y$ and each vertex in $Y$ is adjacent to some vertex in $X$, so if $\varphi(x) \in A$, we have $\varphi(X) \subseteq A$ and $\varphi(Y) \subseteq B$, and if $\varphi(x) \in B$, $\varphi(X) \subseteq B$ and $\varphi(Y) \subseteq A$. Furthermore, as $H$ is complete bipartite, the image of $x$ (and each other vertex in $X$) can be any vertex in the appropriate part, and similarly for the vertices of $Y$, so
\[ \hom(T_n,K_{a,b}) = a^kb^{n-k} + a^{n-k}b^k =: f(k).\]
We have
\[ 
f'(k) = \frac{\log(a)-\log(b)}{(ab)^k}(a^{2k}b^n-a^nb^{2k}).
\]
As $a \ne b$, the only zero of $f'$ occurs when $a^{n-2k}=b^{n-2k}$, which, for distinct positive integers, requires $n-2k = 0$, or $k = \frac{n}{2}$. Noting
\[ f''(k) = \frac{(\log(a)-\log(b))^2}{(ab)^k}(a^{2k}b^n+a^nb^{2k}) > 0, \]
we see $k = \frac{n}{2}$ is a minimum of $f$, and furthermore $f$ is decreasing as $k$ increases from $0$ to $\frac{n}{2}$. Therefore
\[
\min_{T \in \mathcal{T}_n} \hom(T_n,H) \ge 2a^{n/2}b^{n/2} 
\]
when $n$ is even. When $n$ is odd, since $k$ must be an integer,
\[
\min_{T \in \mathcal{T}_n} \hom(T_n,H) \ge a^{(n-1)/2}b^{(n+1)/2} + a^{(n+1)/2}b^{(n-1)/2}.
\]
In both cases, $P_n$ achieves the bound, as does any tree whose bipartition is balanced. If $T$ is a tree whose bipartition is not balanced, $f(k)$ is strictly greater than the minimum value.
\end{proof}

Csikv\'ari and Lin \cite{CsikvariLin2015} observed the case $a=1$ of \cref{thm:complete_bipartite}. The question of whether complete multipartite graphs with more than two parts are Hoffman-London remains open.

\section{Proofs of Theorem~\ref{thm:ordering_framework} and Corollary~\ref{cor:uniquenessalt}} \label{sec:main_theorem_proofs}

In this section, we prove \cref{thm:ordering_framework}, our primary condition for identifying Hoffman-London graphs $H$, and Corollary~\ref{cor:uniquenessalt}, our condition for identifying strongly Hoffman-London graphs. Recall from the introduction that if $H$ is a target graph, possibly with loops, and $u$ and $v$ are vertices in $H$, we say $u$ and $v$ are \emph{automorphically similar} if there is an automorphism of $H$ that sends $u$ to $v$. Automorphic similarity defines an equivalence relation on $V(H)$, and we call the equivalence classes of that relation the \emph{automorphic similarity classes} of $H$. The resulting partition of the vertices is sometimes called the \emph{orbit partition}. As discussed in~\cite[Section 9.3]{GodsilRoyle2001}, the orbit partition is an \emph{equitable partition}, which means that for any two (not necessarily distinct) automorphic similarly classes $A$ and $B$, there is a constant $c(A,B)$ such $|N(v) \cap B| = c(A,B)$ for each $v \in A$. In other words, vertices in the same automorphic similarity class have the same number of neighbors in each other class. For completeness, and because the proof is brief and illustrative, we include the following lemma (which was first presented in~\cite{GalvinMcMillonNirRedlich2025}).

\begin{lemma}\label{lem:auto_sim_nbrs}
Let $H$ be a graph, possibly with loops, and let $H^1, \ldots, H^k$ be the automorphic similarity classes of $H$. For any $1 \le i \le k$, any $u,v \in H^i$, and any $1 \le j \le k$,
\[ |N(u) \cap H^j| = |N(v) \cap H^j|.\]
\end{lemma}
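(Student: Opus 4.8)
The statement is that the number of neighbors a vertex $u \in H^i$ has in $H^j$ depends only on $i$ and $j$, not on the choice of representative $u$. The natural approach is to directly exploit the definition of the automorphic similarity classes: any two vertices $u, v$ in the same class $H^i$ are related by an automorphism of $H$, and automorphisms preserve all combinatorial structure, including adjacency and the orbit partition itself.

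Concretely, I would proceed as follows. Fix $i$ and $j$, and fix $u, v \in H^i$. By definition of automorphic similarity, there is an automorphism $\sigma$ of $H$ with $\sigma(u) = v$. The key claim is that $\sigma$ maps $N(u) \cap H^j$ bijectively onto $N(v) \cap H^j$. For the adjacency part: if $w \in N(u)$, then $uw \in E(H)$, so $\sigma(u)\sigma(w) = v\sigma(w) \in E(H)$ since $\sigma$ is an automorphism, giving $\sigma(w) \in N(v)$; one should be slightly careful in the looped case, since $w = u$ is possible when $u$ has a loop, but then $v = \sigma(u)$ also has a loop and $v \in N(v)$, so the argument still goes through. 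For the class-preservation part: I need that $\sigma$ maps $H^j$ to $H^j$. This follows because automorphisms permute the automorphic similarity classes — if $w, w'$ are automorphically similar via some $\tau$, then $\sigma(w), \sigma(w')$ are automorphically similar via $\sigma \tau \sigma^{-1}$, so $\sigma$ sends each orbit into an orbit; since $\sigma$ is a bijection on a finite vertex set, it in fact permutes the orbits, and since $\sigma$ can be restricted and inverted, $\sigma(H^j)$ is itself an orbit, hence equals $H^j$ only if... actually more care is needed here: $\sigma$ could permute classes nontrivially. The correct statement is that $\sigma$ restricted to $H^j$ is a bijection onto $\sigma(H^j)$, which is one of the classes, and then $|N(u) \cap H^j| = |\sigma(N(u) \cap H^j)| = |N(v) \cap \sigma(H^j)|$. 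To finish I then also use the automorphism $\sigma^{-1}$ applied to $v$ (with $\sigma^{-1}(v) = u$) to get the reverse inequality, or more cleanly: the map $w \mapsto \sigma(w)$ is a bijection from $N(u) \cap H^j$ to $N(v) \cap \sigma(H^j)$, and since $u, v$ are in the same class and $\sigma$ fixes that class setwise (as $\sigma(H^i) \ni \sigma(u) = v \in H^i$ forces $\sigma(H^i) = H^i$), a symmetric argument with $w' \mapsto \sigma^{-1}(w')$ shows $H^j$ is also fixed when we only care about neighbors of vertices in $H^i$.

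Let me streamline: the cleanest route is to observe that $\sigma$ permutes the orbits, so $\sigma(H^j) = H^{j'}$ for some $j'$; then $\sigma$ gives a bijection $N(u) \cap H^j \to N(v) \cap H^{j'}$, so $|N(u) \cap H^j| = |N(v) \cap H^{j'}|$. Applying the same reasoning with $v$ in place of $u$, the automorphism $\sigma^{-1}$ (which sends $v \mapsto u$ and $H^{j'} \mapsto H^j$) gives $|N(v) \cap H^{j'}| = |N(u) \cap H^j|$ — consistent but not yet what I want. The resolution is that I don't need $j' = j$; rather, I should phrase the final lemma statement's proof as: summing, or rather, since the lemma fixes the target class $H^j$ for \emph{both} $u$ and $v$, I want $j' = j$. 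This holds because $\sigma$ being an automorphism with $\sigma(u) = v$ and $u, v \in H^i$ does not by itself fix every orbit, but I can instead choose to phrase the argument around the statement that the orbit partition is equitable (as noted in the excerpt, citing Godsil–Royle), or prove $j' = j$ via: actually the honest fix is that the lemma as literally stated requires knowing $\sigma(H^j) = H^j$, which is false in general. So the real proof must instead argue: $|N(u) \cap H^j| = \sum_{\text{orbits } O} [\sigma(O) = H^j] |N(u) \cap O|$ — no. The \textbf{genuinely correct and simple argument} is: since $\{H^1, \dots, H^k\}$ is exactly the set of orbits and $\sigma$ permutes this set, and $N(v) = \sigma(N(u))$, we get the multiset $\{|N(v) \cap H^1|, \dots, |N(v) \cap H^k|\}$ equals $\{|N(u) \cap \sigma^{-1}(H^1)|, \dots\}$, a permutation of $\{|N(u) \cap H^1|, \dots, |N(u) \cap H^k|\}$. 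That gives equality up to permutation, not the pointwise equality claimed.

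**The main obstacle**, then, is precisely this: pinning down why $\sigma$ must fix the target class $H^j$ setwise, not merely permute classes. The resolution I would use: it suffices to take $\sigma$ to be \emph{any} automorphism sending $u$ to $v$; such a $\sigma$ satisfies $\sigma(H^i) = H^i$ automatically (since $v \in H^i$), but for other classes I should argue differently. In fact the standard proof does \emph{not} need $\sigma(H^j) = H^j$ — it uses that $N(u) \cap H^j$ and $N(v) \cap H^j$ are related because for \emph{each} $w \in N(u) \cap H^j$, the vertex $\sigma(w) \in N(v)$ is automorphically similar to $w$ (via $\sigma$ restricted), hence $\sigma(w) \in H^j$. \textbf{That's it} — $w$ and $\sigma(w)$ are in the same orbit because $\sigma$ is an automorphism, so $\sigma(w) \in H^j$ iff $w \in H^j$, regardless of whether $\sigma$ fixes $H^j$ as a set in any a priori way; but then $\sigma(H^j) \subseteq H^j$ and by finiteness $\sigma(H^j) = H^j$ after all. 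I would therefore write the proof as: take $\sigma$ with $\sigma(u) = v$; since $\sigma$ is an automorphism, $w$ and $\sigma(w)$ lie in the same orbit for every $w$, so $\sigma(H^j) = H^j$; combined with $\sigma(N(u)) = N(v)$, the restriction of $\sigma$ is a bijection $N(u) \cap H^j \to N(v) \cap H^j$, yielding the claimed equality. The whole thing is three or four lines once this point is seen clearly.
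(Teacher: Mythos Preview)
Your proposal is correct and, once you work through the initial confusion, arrives at exactly the paper's argument: take an automorphism $\sigma$ with $\sigma(u)=v$, note that $\sigma$ preserves adjacency and that $w$ and $\sigma(w)$ are automorphically similar (so $\sigma(H^j)=H^j$), hence $\sigma$ restricts to a bijection $N(u)\cap H^j \to N(v)\cap H^j$. The paper phrases this as an injection followed by the reverse inequality via $\sigma^{-1}$, but the core idea is identical.
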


\begin{proof}
Let $\varphi$ be an automorphism of $V(H)$ that sends $u$ to $v$. Then each $w \in N(u) \cap H^j$ is mapped to some $w'$. As $w \sim u$, we see $w' \sim v$, and as $w \in H^j$ and $\varphi(w) = w'$ we see $w$ and $w'$ are automorphically similar, so $w' \in H^j$ as well. We have shown $|N(u) \cap H^j| \le |N(v) \cap H^j|$. Repeating the analysis with $\varphi^{-1}$, we see $|N(u) \cap H^j| \ge |N(v) \cap H^j|$ and so equality holds.
\end{proof}

Using \cref{lem:auto_sim_nbrs}, given an ordering $H^1,\ldots, H^k$ of the automorphic similarity classes of $H$, we define $m_{i,j} = c(H^i, H^j)$ to be the number of neighbors that an arbitrary vertex in $H^i$ has in $H^j$. We then define an \emph{automorphic similarity matrix} of $H$ to be a $k \times k$ matrix $M = M(H)$ whose $(i,j)$-entry is $m_{i,j}$, where we note that different orderings of the automorphic similarity classes of $H$ may result in different automorphic similarity matrices.

As we will see in \cref{lem:tree_walk}, the matrix $M$, together with the sizes of the automorphic similarity classes of $H$, contains all of the information necessary to count $H$-colorings of trees. This could also be accomplished using the adjacency matrix of $H$, but for highly symmetric target graphs the automorphic similarity matrix is much smaller than the adjacency matrix and is thus easier to analyze. First, though, we need the following result, also presented in~\cite{GalvinMcMillonNirRedlich2025}. 

\begin{lemma} \label{lem:using_iso}
Let $H$ be a graph, possibly with loops, and let $G$ be an arbitrary graph with distinguished vertex $v$. Suppose $w$ and $w'$ are automorphically similar vertices in $H$. Let ${\mathcal G}$ be the set of $H$-colorings of $G$ in which $v$ is sent to $w$, and let ${\mathcal G}'$ be the set of $H$-colorings of $G$ in which $v$ is sent to $w'$. Then $|{\mathcal G}|=|{\mathcal G}'|$.
\end{lemma}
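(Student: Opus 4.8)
The plan is to use an automorphism of $H$ witnessing the similarity of $w$ and $w'$ to produce an explicit bijection between $\mathcal{G}$ and $\mathcal{G}'$, from which the equality of cardinalities is immediate. Concretely, since $w$ and $w'$ are automorphically similar, fix an automorphism $\varphi$ of $H$ with $\varphi(w) = w'$, and define $\Phi \colon \mathcal{G} \to \mathcal{G}'$ by $\Phi(f) = \varphi \circ f$; that is, post-compose each coloring with $\varphi$.

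The steps, in order, are: (1) check that $\varphi \circ f$ is again an $H$-coloring of $G$ — for any edge $xy$ of $G$ we have $f(x) \sim_H f(y)$ because $f$ is an $H$-coloring, hence $\varphi(f(x)) \sim_H \varphi(f(y))$ because $\varphi$ is an automorphism of $H$; (2) observe that $(\varphi \circ f)(v) = \varphi(f(v)) = \varphi(w) = w'$, so indeed $\Phi(f) \in \mathcal{G}'$; (3) define $\Psi \colon \mathcal{G}' \to \mathcal{G}$ by $\Psi(f') = \varphi^{-1} \circ f'$, which lands in $\mathcal{G}$ by the same reasoning applied to $\varphi^{-1}$ (noting $\varphi^{-1}(w') = w$); (4) observe $\Psi \circ \Phi = \mathrm{id}_{\mathcal{G}}$ and $\Phi \circ \Psi = \mathrm{id}_{\mathcal{G}'}$, so $\Phi$ is a bijection and $|\mathcal{G}| = |\mathcal{G}'|$. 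Note the argument makes no use of $G$ being a tree.

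There is no substantive obstacle here; the only point meriting care is step (1) in the presence of loops. Since $H$ may contain loops, an edge $xy$ of $G$ can be colored with $f(x) = f(y)$ when that common vertex of $H$ carries a loop, and the verification $\varphi(f(x)) \sim_H \varphi(f(y))$ then amounts to the fact that an automorphism of a graph with loops carries looped vertices to looped vertices — which it does, being an adjacency-preserving bijection. With that convention acknowledged, steps (1)–(4) are routine and the lemma follows.
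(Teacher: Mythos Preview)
Your proof is correct and follows essentially the same approach as the paper's: fix an automorphism $\varphi$ with $\varphi(w)=w'$ and show that composition with $\varphi$ yields a bijection between $\mathcal{G}$ and $\mathcal{G}'$. The paper's one-line proof writes the map as $f \mapsto f \circ \varphi$, which appears to be a typo for $\varphi \circ f$; your version has the composition in the correct order and supplies the routine verifications the paper omits.
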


\begin{proof}
Let $\varphi$ be an automorphism of $H$ that sends $w$ to $w'$, which exists as $w$ and $w'$ are automorphically similar. Then the function mapping $f$ to $f \circ \varphi$ is a bijection mapping ${\mathcal G}$ to ${\mathcal G}'$. 
\end{proof}

We now present another lemma which we will require in the proof of \cref{thm:ordering_framework} but which we also feel is also of independent interest: a generalization to the setting of automorphic similarity classes and matrices of the tree-walk algorithm of Csikv\'ari and Lin \cite{CsikvariLin2014}. Recall that $\Homclass{v}{i}{T}{H}$ denotes the set of $H$-colorings of $T$ in which $v$ is mapped to some (arbitrary but specific) representative of $H^i$ and that we set $\homclass{v}{i}{T}{H}=|\Homclass{v}{i}{T}{H}|$.

\begin{lemma} \label{lem:tree_walk}
Let $H$ be a target graph with automorphic similarity classes $H^1, \ldots, H^k$ and associated automorphic similarity matrix $M$. Let $T$ be a tree with root $v$.  Let $\mathbf{a}(H)$ be the row vector whose \ith{i} entry is $|H^i|$ and define $\mathbf{h}(T,v)$ to be the column vector whose \ith{i} entry is $\homclass{v}{i}{T}{H}$.

Then $\hom(T,H)={\mathbf a}(H) \mathbf{h}(T,v)$, and $\mathbf{h}(T,v)$ can be explicitly computed from $T$, $\mathbf{a}(H)$, and $M$, via recursion on $T$.
\end{lemma}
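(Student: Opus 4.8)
The plan is to prove \cref{lem:tree_walk} by induction on the number of vertices of the rooted tree $(T,v)$, establishing a recursion that expresses $\mathbf{h}(T,v)$ in terms of the vectors $\mathbf{h}$ associated to the subtrees hanging off $v$. The base case is the single-vertex tree: then every map $V(T)\to V(H)$ is an $H$-coloring, so $\homclass{v}{i}{T}{H}=1$ for every $i$, giving $\mathbf{h}(T,v)=\mathbf{1}$ and $\mathbf{a}(H)\mathbf{h}(T,v)=\sum_i |H^i|=|V(H)|=\hom(T,H)$, as required.

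The key structural observation for the inductive step is multiplicativity over subtrees at the root. Suppose the children of $v$ in $T$ are $u_1,\dots,u_d$, and let $T_\ell$ be the subtree rooted at $u_\ell$ (so $T$ is obtained by joining $v$ to the roots of $T_1,\dots,T_d$). An $H$-coloring of $T$ with $v\mapsto w$ is precisely a choice, for each $\ell$, of an $H$-coloring of $T_\ell$ whose root $u_\ell$ is sent to a neighbor of $w$; these choices are independent. Hence, fixing $w\in H^i$,
\[
\homclass{v}{i}{T}{H}=\prod_{\ell=1}^{d}\;\sum_{x\in N(w)}\homclass{u_\ell}{[x]}{T_\ell}{H},
\]
where $[x]$ denotes the automorphic similarity class containing $x$; this is well-defined and independent of the choice of representative $w\in H^i$ by \cref{lem:using_iso}. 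Now I would group the inner sum by similarity class: the number of neighbors of $w$ lying in $H^j$ is exactly $m_{i,j}$ by \cref{lem:auto_sim_nbrs}, and \cref{lem:using_iso} again tells us $\homclass{u_\ell}{j}{T_\ell}{H}$ depends only on $j$, not on which vertex of $H^j$ we land on. Therefore $\sum_{x\in N(w)}\homclass{u_\ell}{[x]}{T_\ell}{H}=\sum_{j=1}^{k}m_{i,j}\,\homclass{u_\ell}{j}{T_\ell}{H}=\bigl(M\,\mathbf{h}(T_\ell,u_\ell)\bigr)_i$. Writing $\odot$ for the entrywise (Hadamard) product of column vectors, this yields the clean recursion
\[
\mathbf{h}(T,v)=\bigodot_{\ell=1}^{d} \bigl(M\,\mathbf{h}(T_\ell,u_\ell)\bigr),
\]
and since each $T_\ell$ has fewer vertices than $T$, the induction hypothesis supplies each $\mathbf{h}(T_\ell,u_\ell)$, so $\mathbf{h}(T,v)$ is explicitly computable from $T$ and $M$. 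The identity $\hom(T,H)=\mathbf{a}(H)\mathbf{h}(T,v)$ then follows by partitioning $\Hom(T,H)$ according to the class of the image of $v$: $\hom(T,H)=\sum_{i=1}^{k}|H^i|\,\homclass{v}{i}{T}{H}=\mathbf{a}(H)\mathbf{h}(T,v)$, where the factor $|H^i|$ appears because, by \cref{lem:using_iso}, each of the $|H^i|$ vertices of $H^i$ contributes the same count $\homclass{v}{i}{T}{H}$.

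The main obstacle is not any single hard computation but rather being careful about well-definedness: the quantity $\homclass{v}{i}{T}{H}$ is defined via an \emph{arbitrary but fixed} representative of $H^i$, so I must invoke \cref{lem:using_iso} at each point where a sum over actual neighbors in $H$ is replaced by a sum weighted by the $m_{i,j}$ — both when counting the neighbors of the root's image and when evaluating the subtree counts at those neighbors. A secondary point to handle cleanly is the degenerate case $d=0$ (the empty product equals the all-ones vector, recovering the base case) and the observation that the recursion does not depend on the order in which the children are processed, which follows from commutativity of the Hadamard product. Everything else is bookkeeping.
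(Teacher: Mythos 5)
Your proof is correct and takes essentially the same approach as the paper: both partition $\Hom(T,H)$ by the similarity class of the root's image (invoking \cref{lem:using_iso} for well-definedness), and both build $\mathbf{h}(T,v)$ recursively using multiplication by $M$ to pass an edge and the Hadamard product to glue independent subtrees. The only cosmetic difference is that the paper splits the recursion into two cases (degree~$1$, where $\mathbf{h}(T,v)=M\mathbf{h}(T-v,w)$, and degree~$\ge 2$, where $\mathbf{h}(T,v)=\mathbf{h}(T',v')\odot\mathbf{h}(T'',v'')$), whereas you package these into the single formula $\mathbf{h}(T,v)=\bigodot_{\ell}\bigl(M\,\mathbf{h}(T_\ell,u_\ell)\bigr)$; unwinding the paper's recursion yields exactly your formula.
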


\begin{proof}
To see that $\hom(T,H)={\mathbf a}(H) \mathbf{h}(T,v)$, partition $\Hom(T,H)$ according to the color given to $v$. There are $|H^i|$ classes in this partition in which $v$ is send to some vertex in $H^i$, and by \cref{lem:using_iso}, each of these classes has size $\homclass{v}{i}{T}{H}$. Hence the total count of $H$-colorings is
\[
\sum_{i=1}^k |H^i|\homclass{v}{i}{T}{H} = {\mathbf a}(H) \mathbf{h}(T,v).
\]

To compute $\mathbf{h}(T,v)$, we proceed recursively. When $T$ consists of a single vertex, necessarily $v$, we have that $\homclass{v}{i}{T}{H} = 1$ for any $i$, and so $\mathbf{h}(T,v)$ is the constant vector with all entries equal to $1$. 

When $T$ has more than one vertex, we consider separately the cases $\deg(v) = 1$ and $\deg(v) \ge 2$. When $\deg(v) = 1$, let $w$ be the neighbor of $v$. Then
\[ \homclass{v}{i}{T}{H} = \sum_{j=1}^k m_{i,j}\homclass{w}{j}{T-v}{H}.\]
Indeed, once we have colored $v$ with the representative vertex from $H^i$, there are, for each $j$, $m_{i,j}$ choices for a color from $H^j$ for $w$ (as the representative vertex in $H^i$ has $m_{i,j}$ neighbors in $H^j$ for each $j$). For each of these $m_{i,j}$ choices, recalling that $\homclass{w}{j}{T-v}{H}$ is independent of the choice of representative from $H^j$ due to \cref{lem:using_iso}, there are $\homclass{w}{j}{T-v}{H}$ ways to extend the coloring to the rest of $T$. More succinctly, we have
\begin{equation} \label{eq:treewalkalgorithm-pathcase} 
\mathbf{h}(T,v) = M \mathbf{h}(T-v,w)
\end{equation}
where $M$ is the automorphic similarity matrix of $H$. 

When $\deg(v) = d \ge 2$, let $w$ be any neighbor of $v$. Form tree $T'$ by taking the component of $T-v$ that contains $w$ and adding a new vertex $v' \sim w$. Form tree $T''$ by taking each component of $T-v$ that does not contain $w$ (of which there is at least one, because $v$ has degree at least two) and adding a new vertex $v''$ adjacent to each vertex that was adjacent to $v$ in $T$. Note that by gluing $v'$ to $v''$ we recover $T$. (See \cref{fig:decomposition}.)

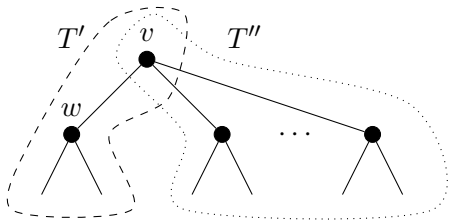
\begin{figure}[ht]
    \centering
    \begin{tikzpicture}
        \begin{scope}
            \node[circle,draw,fill,label=above:$v$, minimum size=6pt, inner sep=0pt] (v) at (1,1) {};
            \node[circle,draw,fill,label=$w$,minimum size=6pt, inner sep=0pt] (w) at (0,0) {};
            \node[circle,draw,fill,minimum size=6pt, inner sep=0pt] (w1) at (2,0) {};
            \node at (3,0) {$\cdots$};
            \node[circle,draw,fill,minimum size=6pt, inner sep=0pt] (w2) at (4,0) {};
            \node at (0,1.3) {$T'$};
            \node at (2.3,1.3) {$T''$};
    
            \draw (-0.4,-0.8) -- (w) -- (v) -- (w1) -- (2.4,-0.8);
            \draw (0.4,-0.8) -- (w);
            \draw (1.6, -0.8) -- (w1);
            \draw (v) -- (w2);
            \draw (3.6, -0.8) -- (w2) -- (4.4,-0.8);
    
            \draw[dashed] plot [smooth cycle] coordinates {(1.5,1.6) (0.5,1.5) (-0.5,0) (-0.8,-1) (0.8,-1) (0.5,0) (1.3,0.5)};
    
            \draw[dotted] plot [smooth cycle] coordinates {(0.6,1) (1.5,0) (1.5,-1) (4.8,-1) (4.5,0.4) (2,1) (1.1,1.6)};
        \end{scope}
    \end{tikzpicture}
    \caption{Deconstructing $T$ into $T'$ and $T''$.} \label{fig:decomposition}
\end{figure}

Then
\[ \mathbf{h}(T,v) = \mathbf{h}(T',v') \odot \mathbf{h}(T'',v''),\]
where $\odot$ represents the Hadamard product in which entries are multiplied term-wise. Indeed, to $H$-color $T$ while sending $v$ to $H^i$, we take any $H$-coloring of $T'$ where $v'$ is sent to $H^i$  ($\homclass{v'}{i}{T'}{H}$ many options) and independently any $H$-coloring of $T''$ where $v''$ is sent to $H^i$ ($\homclass{v''}{i}{T''}{H}$ many options). 
\end{proof}

We require an additional ingredient before proving \cref{thm:ordering_framework}. The KC ordering, introduced by Csikv\'ari~\cite{Csikvari2010} (under the name \emph{generalized tree shift}) as a generalization of an operation introduced by Kelmans~\cite{Kelmans1981}, is a partial ordering of trees on $n$ vertices. Let $T$ be a tree, and let $v_\ell \ne v_r$ be two non-leaf vertices with the property that the unique $v_\ell$ to $v_r$ path in $T$ is a bare path, meaning that each of the internal vertices, if they exist, have degree two. Let $P$ denote that path on $t \ge 2$ vertices.

Removing the edges and internal vertices (if any) of $P$ from $T$ leaves two components; call them $L$ (the one with $v_\ell$) and $R$ (the one with $v_r$). We think of these as the ``left'' and ``right'' components. Denote by $T^{KC}$ the tree built as follows: glue $L$ and $R$ together into a single tree by identifying the vertices $v_\ell$ and $v_r$; let $v$ be the vertex created by the identification. Then complete the construction of $T^{KC}$ by appending a path with $t-1$ new vertices to $v$; denote by $w$ the other end of the appended path. We say that $T^{KC}$ is obtained from $T$ by a {\it KC move}. \cref{fig:KC} illustrates this process. Note that $T$ has at least one such a pair of non-leaf vertices $v_\ell$, $v_r$ exactly if $T$ is not a star.

\begin{figure}[ht]
    \centering
    \begin{tikzpicture}
        \coordinate (L) at (-2.5,0);
        \draw[black, fill=white] (L) circle (1);
        \node at (L) {$L$};
        \coordinate (R) at (2.5,0);
        \draw[black, fill=white] (R) circle (1);
        \node at (R) {$R$};
        
        \coordinate (vl) at (-1.5,0);
        \node[label=above:{$v_\ell$}] at (-1.3,-0.1) {};
        \coordinate (vr) at (1.5,0);
        \node[label=above:{$v_r$}] at (1.3,-0.1) {};
        \coordinate (v1) at (-0.75,0);
        \coordinate (v2) at (0.75,0);
        \draw[fill=black] (vl) circle (3pt);
        \draw[fill=black] (vr) circle (3pt);
        \draw[fill=black] (v1) circle (3pt);
        \draw[fill=black] (v2) circle (3pt);
        \draw[fill=black, thick] (vl)--(v1);
        \draw[fill=black, thick, dashed] (v1) -- (v2);
        \draw[fill=black, thick] (v2) -- (vr);
        \draw [decorate,decoration={brace,amplitude=5pt,mirror,raise=10pt}] (-1.4,0) -- (1.4,0) node[midway,yshift=-23pt]{$P$};
        \node at (0,2) {};
        \node at (0,-2) {};
    \end{tikzpicture}
    \begin{tikzpicture}
        \node at (0,2) {};
        \node at (0,-2) {};
        \draw [line width=1pt, double distance=3pt,
             arrows = {-Latex[length=0pt 3 0]}] (0,0) -- (1.5,0);
    \end{tikzpicture}
    \begin{tikzpicture}
        \coordinate (L) at (-1,0);
        \draw[black, fill=white] (L) circle (1);
        \node at (L) {$L$};
        \coordinate (R) at (1,0);
        \draw[black, fill=white] (R) circle (1);
        \node at (R) {$R$};
        
        \coordinate (v) at (0,0);
        \node[label=right:{$v$}] at (v) {};
        \coordinate (w) at (0,-3);
        \node[label=below:{$w$}] at (w) {};
        \coordinate (v1) at (0,-0.75);
        \coordinate (v2) at (0,-2.25);
        \draw[fill=black] (v) circle (3pt);
        \draw[fill=black] (w) circle (3pt);
        \draw[fill=black] (v1) circle (3pt);
        \draw[fill=black] (v2) circle (3pt);
        \draw[fill=black, thick] (v)--(v1);
        \draw[fill=black, thick, dashed] (v1) -- (v2);
        \draw[fill=black, thick] (v2) -- (w);
        \draw [decorate,decoration={brace,amplitude=5pt,mirror,raise=10pt}] (0,-0.1) -- (0,-2.9) node[midway,xshift=-23pt]{$P$};
    \end{tikzpicture}
    \caption{A demonstration of a KC move.} \label{fig:KC}
\end{figure}
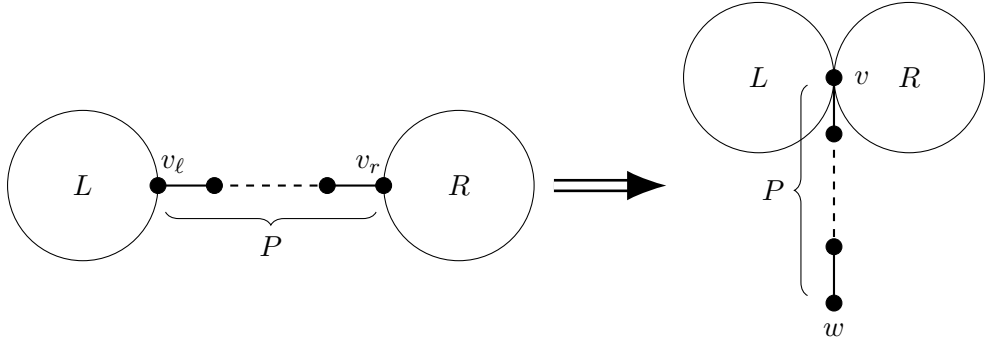

Csikv\'ari~\cite[Remark 2.4]{Csikvari2010} shows that the relation $S \le T$ defined by $T$ being obtainable from $S$ by a sequence of KC moves defines, for each $n$, a partial order on ${\mathcal T}_n$ (the {\it KC ordering}), with $P_n$ the unique minimal element and $S_n$ the unique maximal element.

The following lemma is necessary for the proof of \cref{thm:ordering_framework}. The proof that we give follows \cite[Theorem 1.1, $(a) \Rightarrow (c)$]{KeilsonKester1977}. We choose to present the full details because \cite{KeilsonKester1977} only treats matrices whose row sums are $1$. 

\begin{lemma} \label{lem:matrix_prop}
Let $M$ be a $k \times k$ non-negative matrix that has the increasing columns property, and let ${\mathbf h}$ be a non-negative column vector of dimension $k$ whose entries are non-decreasing. Then the entries of $M{\mathbf h}$ are non-negative and non-decreasing.
\end{lemma}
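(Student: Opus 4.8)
The plan is to prove the two claims—non-negativity and monotonicity of the entries of $M\mathbf{h}$—separately, with the monotonicity being the only nontrivial part. Non-negativity is immediate: $M$ has non-negative entries and $\mathbf{h}$ has non-negative entries, so each entry $(M\mathbf{h})_i = \sum_{j=1}^k m_{i,j} h_j$ is a sum of products of non-negative numbers and hence non-negative.

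For monotonicity, I want to show $(M\mathbf{h})_i \le (M\mathbf{h})_{i+1}$ for each $1 \le i \le k-1$, i.e. $\sum_{j=1}^k m_{i,j} h_j \le \sum_{j=1}^k m_{i+1,j} h_j$. The key idea is Abel summation (summation by parts), rewriting each side in terms of the \emph{terminal column sums} $\sigma_i(c) := \sum_{j=c}^k m_{i,j}$ that appear in the increasing columns property, together with the \emph{consecutive differences} $\Delta h_c := h_c - h_{c-1}$ of the vector $\mathbf{h}$ (with the convention $h_0 = 0$, so $\Delta h_1 = h_1$). The identity is
\[
\sum_{j=1}^k m_{i,j} h_j = \sum_{c=1}^k \sigma_i(c)\, \Delta h_c,
\]
which one checks by writing $h_j = \sum_{c=1}^j \Delta h_c$ and swapping the order of summation: the coefficient of $\Delta h_c$ on the left becomes $\sum_{j=c}^k m_{i,j} = \sigma_i(c)$. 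Now apply this identity to both rows $i$ and $i+1$ and subtract:
\[
(M\mathbf{h})_{i+1} - (M\mathbf{h})_i = \sum_{c=1}^k \bigl(\sigma_{i+1}(c) - \sigma_i(c)\bigr)\, \Delta h_c.
\]
By the increasing columns property (\cref{def:incr-columns}), $\sigma_{i+1}(c) - \sigma_i(c) \ge 0$ for every $c$; and since $\mathbf{h}$ is non-decreasing and non-negative, $\Delta h_c \ge 0$ for every $c$ (including $c=1$, where $\Delta h_1 = h_1 \ge 0$). Hence the sum is a sum of non-negative terms, so it is $\ge 0$, giving $(M\mathbf{h})_i \le (M\mathbf{h})_{i+1}$, as desired.

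I do not anticipate a serious obstacle here; the main care-point is getting the Abel summation bookkeeping right, particularly the boundary convention $h_0 = 0$ so that the $c=1$ term (which involves the full row sum $\sigma_i(1) = \sum_j m_{i,j}$) is correctly paired with $\Delta h_1 = h_1$. It is worth double-checking that the increasing columns property is invoked at \emph{every} value of $c$ from $1$ to $k$, and that non-negativity of $\mathbf{h}$ is genuinely needed only to handle $c = 1$ (for $c \ge 2$, monotonicity of $\mathbf{h}$ alone gives $\Delta h_c \ge 0$). This matches the remark in the excerpt that \cite{KeilsonKester1977} only handles the row-stochastic case: there the $c=1$ term drops out because $\sigma_i(1) = 1$ for all $i$, so $\sigma_{i+1}(1) - \sigma_i(1) = 0$, whereas here we need the extra hypothesis $h_1 \ge 0$ to kill a possibly-positive $c=1$ contribution.
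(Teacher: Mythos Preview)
Your proof is correct and is essentially the same argument as the paper's, just packaged differently: the paper introduces the lower-triangular all-ones matrix $S$, observes that ``non-negative and non-decreasing'' is equivalent to $S^{-1}\mathbf{v}\ge 0$, and then reduces to showing $S^{-1}MS$ has non-negative entries---which, when unwound entrywise, is exactly your Abel summation identity $(M\mathbf{h})_{i+1}-(M\mathbf{h})_i=\sum_c\bigl(\sigma_{i+1}(c)-\sigma_i(c)\bigr)\Delta h_c$. Your componentwise presentation is a bit more transparent (and avoids a small indexing slip in the paper's computation of $A_{i,j}$), but the mathematical content is identical.
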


\begin{proof}
Let $S$ be the $k \times k$ matrix that has $1$s on and below the main diagonal and $0$s everywhere else. Note that $S^{-1}$ has $1$s down the main diagonal, $-1$s down the first subdiagonal, and $0$s everywhere else, which can be verified by computing $SS^{-1}$.

Observe that for a vector ${\mathbf v}=\begin{bmatrix} v_1 & v_2 & \cdots & v_k \end{bmatrix}^T$, the entries of ${\mathbf v}$ being non-negative and non-decreasing is equivalent to the entries of $S^{-1}{\mathbf v}$ being non-negative because the first entry of $S^{-1}{\mathbf v}$ is $v_1$ and for $i > 1$, the \ith{i} entry is $v_i-v_{i-1}$. 

We wish to show that the entries of $M{\mathbf h}$ are non-negative and non-decreasing. This is implied by $S^{-1}M{\mathbf h}$ having non-negative entries, which is in turn implied by $(S^{-1}MS)(S^{-1}{\mathbf h})$ having non-negative entries. By assumption, the entries of ${\mathbf h}$ are non-negative and non-decreasing, so the entries of $S^{-1}{\mathbf h}$ are non-negative. Therefore it suffices to show that $S^{-1}MS$ has non-negative entries. 

Let $A=S^{-1}MS$. We have
\begin{eqnarray*}
A_{i,j} & = & \sum_{p=1}^k (S^{-1})_{i,p} \sum_{q=1}^k m_{p,q}S_{q,j} \\
& = & \sum_{p=1}^k (S^{-1})_{i,p} \sum_{q=1}^j m_{p,q} \\
& = & \sum_{q=1}^j m_{i,q} - \sum_{q=1}^j m_{i-1,q},
\end{eqnarray*}
where we adopt the convention that $m_{0,q}=0$. Then
\[\sum_{q=1}^j m_{i,q} - \sum_{q=1}^j m_{i-1,q} \ge 0\]
follows from the increasing columns property. 
\end{proof}

We're now prepared to prove \cref{thm:ordering_framework}, which we restate here for convenience.

\orderings*

\begin{proof}
We show that when $H$ is a graph whose automorphic similarity classes can be ordered in such a way that the automorphic similarity matrix of $H$ has the increasing columns property, applying a non-trivial KC move to a tree does not decrease the number of $H$-colorings of the tree. As the path is the minimum element of the KC ordering, it must therefore be among the minimizers of $\hom(T_n,H)$.

We begin by establishing some notation. Let $T$ be a non-star tree, let $v_\ell \ne v_r$ be two non-leaf vertices of $T$ with the property that all of the internal vertices on the unique $v_\ell$ to $v_r$ path have degree two, let there be $t$ vertices in that internal path, and let $T^{KC}$ be the tree obtained from $T$ by applying a KC move at vertices $v_\ell$ and $v_r$. Suppose $H$ has automorphic similarity classes $H^1, \ldots, H^k$, labeled in such a way that the automorphic similarity matrix $M$ has the increasing columns property. For $1 \le i \le k$, denote by ${\mathcal L}_i$ the set of $H$-colorings of $L$ in which $v_\ell$ is mapped to some specific (but arbitrarily chosen) vertex $w_i$ of $H^i$, and let $\ell_i=|{\mathcal L}_i|$. Define ${\mathcal R}_i$ and $r_i$ analogously (with $L$ replaced by $R$). Note that by \cref{lem:using_iso}, $\ell_i$ and $r_i$ depend only on $i$ and not on the specific choice of $w_i$. For $w, v \in V(H)$, denote by ${\mathcal P}^t(w,v)$ the set of $H$-colorings of any path $x_1, \ldots, x_t$ (where, recall, $t$ is the number of vertices on the unique path in $T$ between $v_\ell$ and $v_r$) in which $x_1$ is mapped to $w$ and $x_t$ is mapped to $v$, and let $p^t(w,v)=|{\mathcal P}^t(w,v)|$. Finally, for $1 \le i \le k$ and $1 \le j \le k$, let ${\mathcal P}^t_{i,j} = \bigcup_{w \in H^i,~ v \in H^j} {\mathcal P}^t(w,v)$ and let $p^t_{i,j}=|{\mathcal P}^t_{i,j}| =\sum_{w \in H^i,~v \in H^j} p^t(w,v)$. 

The idea at the core of this proof is that $\hom(T,H)$ and $\hom(T^{KC},H)$ can both be expressed in terms of the $\ell_i$, $r_j$, and $p^t_{i,j}$s by partitioning the $H$-colorings according to, in the case of $T$, the automorphic similarity classes to which $v_\ell$ and $v_r$ are mapped, and in the case of $T^{KC}$, the automorphic similarity classes to which $v$ and $w$ are mapped. In each case we get expressions that are sums of $k^2$ terms, and we show $\hom(T^{KC},H) \ge \hom(T,H)$ by proving each term in the summation expression for $\hom(T^{KC},H) - \hom(T,H)$ is non-negative.

When computing $\hom(T,H)$, $v_\ell$ and $v_r$ can take any pair of values, and those pairs of values then determine the values taken by the endvertices of $P$, which gives
\begin{eqnarray} 
\hom(T,H) & = & \sum_{i=1}^k \sum_{j=1}^k \sum_{w \in H^i} \sum_{v \in H^j} \ell_ir_j p^t(v,w) \notag \\
& = & \sum_{i=1}^k \sum_{j=1}^k \ell_ir_j p^t_{i,j}. \label{eq:pre_move_hom}
\end{eqnarray}
Note here that the count of $H$-colorings of $T$ that send $v_\ell$ to some $w \in H^i$ and $v_r$ to some $v \in H^j$ should, {\it a priori}, involve terms that depend on $w$ and $v$, but by \cref{lem:using_iso}, these terms ($\ell_i$ and $r_j$, respectively) are independent of $w$ and $v$. 

In contrast, when computing $\hom(T^{KC},H)$, $v_\ell$ and $v_r$ are forced to take a common value, which determines the values at one of the endvertices of $P$, and the value at the other endvertex is free, which means
\begin{eqnarray} 
\hom(T^{KC},H) & = & \sum_{i=1}^k \sum_{j=1}^k \sum_{w \in H^i} \sum_{v \in H^j} \ell_ir_i p^t(w,v) \notag \\
& = & \sum_{i=1}^k \sum_{j=1}^k \ell_ir_i p^t_{i,j}. \label{eq:post_move_hom}
\end{eqnarray}
 
The term $\ell_ir_ip^t_{i,i}$ appears in both sums for each $1 \le i \le k$. Using symmetry to note $p^t_{i,j} = p^t_{j,i}$ for $1 \le i < j \le k$, we combine equations \eqref{eq:pre_move_hom} and \eqref{eq:post_move_hom} to write
\begin{eqnarray}
\hom(T^{KC},H) - \hom(T,H) & = & \sum_{1 \le i < j \le k} (\ell_ir_i+\ell_jr_j-\ell_ir_j-\ell_jr_i)p^t_{i,j} \notag \\
& = & \sum_{1 \le i < j \le k} (\ell_j-\ell_i)(r_j-r_i)p^t_{i,j}. \label{eq:non-negative}
\end{eqnarray}
Noting that trivially $p^t_{i,j}\ge 0$, we conclude that in order to prove $\hom(T^{KC},H) \ge \hom(T,H)$ it suffices to show that for each $i,j$ we have $(\ell_j-\ell_i)(r_j-r_i) \ge 0$.

Given any tree $T$ and distinguished vertex $v$, recall that $\homclass{v}{i}{T}{H}$ denotes the number of $H$-colorings of $T$ in which distinguished vertex $v$ is mapped to some specific (but arbitrarily chosen) vertex $w_i$ of $H^i$, where we again note that by \cref{lem:using_iso}, $\homclass{v}{i}{T}{H}$ depends only on $i$ and not on the specific choice of $w_i$. As in the statement of \cref{lem:tree_walk}, define $\mathbf{h}(T,v)$ to be the $k$-dimensional column vector whose \ith{i} entry is $\homclass{v}{i}{T}{H}$. We now prove that our condition on the columns of $M$, the automorphic similarity matrix of $H$, ensures that $\mathbf{h}(T,v)$ is non-decreasing for any choice of $T$ and $v$, so that in particular when $i < j$,
\[ \ell_j = \homclass{v_\ell}{j}{T}{H} \ge \homclass{v_\ell}{i}{T}{H} = \ell_i,\]
and analogously $r_j \ge r_i$.

To see that $\mathbf{h}(T,v)$ is always non-decreasing, we use \cref{lem:tree_walk,lem:matrix_prop}. By the proof of \cref{lem:tree_walk}, $\mathbf{h}(T,v)$ can be computed from the all-$1$s column vector by a sequence of steps that involve taking Hadamard products of vectors and pre-multiplying vectors by $M$. Combining the facts that the all-$1$s vector is non-negative and non-decreasing, that the Hadamard product of non-negative, non-decreasing vectors is both non-negative and non-decreasing, and that (by \cref{lem:matrix_prop}) pre-multiplying by $M$ preserves the properties of being non-negative and non-decreasing, we see that $\mathbf{h}(T,v)$ is indeed non-negative and non-decreasing. Therefore, $\ell_j \ge \ell_i$ and $r_j \ge r_i$, so for each $i$ and $j$, $(\ell_j-\ell_i)(r_j-r_i) \ge 0$ and we must have $\hom(T^{KC},H) \ge \hom(T,H)$, as desired.
\end{proof}

We now restate and prove \cref{cor:uniquenessalt}, which gives conditions under which paths uniquely minimize $\hom(T,H)$. The proof focuses on equation \eqref{eq:non-negative} in the special case where $T$ is a path, and aims to show that at least one of the summands on the right-hand of equation \eqref{eq:non-negative} is (under certain circumstances) strictly positive. As an aside, we note that unlike traditional corollaries which follow directly from a theorem, we justify \cref{cor:uniquenessalt} using ideas from the proof of \cref{thm:ordering_framework} rather than using the statement. Nonetheless, we use the term ``corollary'' to highlight the connection between the results. 

\uniquenessalt*

\begin{proof}
Let $P^{KC}_n$ be obtained from $P_n$ by a KC move. Suppose that the bare path involved in this move has $t$ vertices. We have 
\begin{eqnarray}
\hom(P^{KC}_n,H) - \hom(P_n,H) & = & \sum_{i< j} (\ell_j-\ell_i)(r_j-r_i)p^t_{ij} \label{eq:line1}\\
& \ge & (\ell_{b(t)}-\ell_{a(t)})(r_{b(t)}-r_{a(t)})p^t_{a(t),b(t)}. \label{inq:line2}
\end{eqnarray}
Equation~\eqref{eq:line1} is an instance of equation \eqref{eq:non-negative}, while inequality~\eqref{inq:line2} uses that $(\ell_j-\ell_i)(r_j-r_i)p^t_{ij} \ge 0$ for all $i, j$, which comes from the proof of \cref{thm:ordering_framework}. 

Note that $p^t_{a(t),b(t)}$ counts the number of $H$-colorings of $P_t$ that send one end vertex of $P_t$ to something in $H^{a(t)}$ and the other end to something in $H^{b(t)}$; by hypothesis there is at least one such $H$-coloring, so $p^t_{a(t),b(t)} > 0$.

We have that $\ell_{b(t)} = \homclass{v}{b(t)}{L}{H}$ and $\ell_{a(t)} = \homclass{v}{a(t)}{L}{H}$. As $L$ is a path, we have by hypothesis that $\ell_{b(t)} > \ell_{a(t)}$. Similarly, $r_{b(t)} > r_{a(t)}$. We conclude that
\[(\ell_{b(t)}-\ell_{a(t)})(r_{b(t)}-r_{a(t)})p^t_{a(t),b(t)} > 0.\]

It follows from inequality \eqref{inq:line2} that $\hom(P^{KC}_n,H) - \hom(P_n,H) > 0$, and so for any $T_n$ obtained from $P_n$ by a sequence of KC moves, $\hom(P_n,H)<\hom(P_n^{KC},H)\le\hom(T_n,H)$. Since all trees on $n$ vertices can be thus obtained, the result follows. 
\end{proof}

\section{Applications of Theorem~\ref{thm:ordering_framework}} \label{sec:ordering_applications}

One goal of this paper is to use \cref{thm:ordering_framework} to contribute towards the resolution of~\cref{prob:CL}. Before we state our results in this direction, we summarize some pertinent results from the literature. We then show how \cref{thm:ordering_framework} can reprove and sometimes strengthen some of these results, before moving on to establishing some new Hoffman-London families.

\subsection{Two previous results on Hoffman-London graphs}

In inequality \eqref{inq:PT}, we saw that the path minimizes the number of independent sets among trees, i.e. that the target graph $H_{\rm ind}$ is Hoffman-London, where, recall, $H_{\rm ind}$ consists of two vertices joined by an edge with a loop at one vertex. Engbers and Galvin~\cite{EngbersGalvin2017} gave a significant generalization of this result by showing that target graphs $H$ formed by adding looped dominating vertices (looped vertices that are adjacent to all other vertices) to a regular graph are Hoffman-London, and they characterized the strongly Hoffman-London target graphs in this family.
\begin{theorem}[Engbers, Galvin~\cite{EngbersGalvin2017}] \label{thm:dom_reg}
If $H$ is obtained from a regular graph by adding any number of looped dominating vertices, then $H$ is Hoffman-London. If $H$ is not regular (equivalently, if $H$ is not a fully looped complete graph), then $H$ is strongly Hoffman-London. 
\end{theorem}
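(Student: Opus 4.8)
The plan is to use a simple counting identity to reduce the statement to \cref{thm:ordering_framework} and \cref{cor:uniquenessalt} for a target graph with just two automorphic similarity classes. Write $H$ as a $d$-regular graph $R$ on $r$ vertices together with $s\ge 0$ looped dominating vertices, so $V(H)=V(R)\sqcup U$ with $|U|=s$. If $s=0$ then $H=R$ is regular and \cref{obs:regular} already gives the (non-strict) conclusion; likewise $d\le r$ always, with equality exactly when $R$ is a fully looped complete graph, in which case $H$ too is regular and \cref{obs:regular} applies. So we may assume $s\ge 1$ and $d<r$, which is precisely the non-regular case where the strong conclusion is targeted. The first step is to record that for every graph $T$ on $n$ vertices,
\[
\hom(T,H)=\sum_{A\subseteq V(T)}\hom\!\bigl(T[A],R\bigr)\,s^{\,n-|A|},
\]
since an $H$-coloring of $T$ is exactly the data of the set $A$ of vertices mapped into $V(R)$, an $R$-coloring of $T[A]$, and an arbitrary map $V(T)\setminus A\to U$ — there is no further constraint because each vertex of $U$ is looped and adjacent to all of $H$.

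The key move is to replace $R$ by a more symmetric graph. By \cref{obs:regular} applied to each component, $\hom(F,R)$ depends on a forest $F$ only through its multiset of component orders and on the pair $(r,d)$; hence if $R_0$ is \emph{any} vertex-transitive $d$-regular graph on $r$ vertices (for instance a suitable circulant, possibly with loops), then $\hom(F,R)=\hom(F,R_0)$ for every forest $F$, so $\hom(T,H)=\hom(T,H_0)$ for every tree $T$, where $H_0$ is obtained from $R_0$ by adding $s$ looped dominating vertices. Thus it suffices to treat $H_0$, and now $H_0$ has exactly two automorphic similarity classes: the vertices of $U$ have degree $r+s$ while those of $V(R_0)$ have degree $d+s<r+s$, so every automorphism of $H_0$ fixes $U$ setwise, the automorphism group of $H_0$ is the direct product of the automorphism group of $R_0$ and the symmetric group on $U$, and the orbits are $V(R_0)$ and $U$. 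Ordering the classes as $(V(R_0),U)$, the automorphic similarity matrix is
\[
M=\begin{pmatrix} d & s\\ r & s\end{pmatrix},
\]
and the terminal column sums $d+s\le r+s$ and $s\le s$ show $M$ has the increasing columns property. By \cref{thm:ordering_framework}, $H_0$, and hence $H$, is Hoffman-London.

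For the strong conclusion I would apply \cref{cor:uniquenessalt} to $H_0$ with $a(t)=1$ (the class $V(R_0)$) and $b(t)=2$ (the class $U$) for all $t\ge 2$. An $H_0$-coloring of $P_t$ putting one endvertex in $V(R_0)$ and the other in $U$ exists: map the first vertex to some $v_0\in V(R_0)$ and every other vertex to a fixed $u\in U$, which is legal as $v_0\sim u$ and $u$ is looped. It remains to check $\homclass{v}{2}{P_\sigma}{H_0}>\homclass{v}{1}{P_\sigma}{H_0}$ for every $\sigma\ge 2$, with $v$ an endvertex of $P_\sigma$. Iterating the degree-one case of the recursion in \cref{lem:tree_walk} gives $\mathbf h(P_\sigma,v)=M^{\sigma-1}\mathbf 1$ (the all-ones vector), so it suffices to show by induction that, writing $M^{k}\mathbf 1=(a_k,b_k)^{\trans}$, one has $b_k>a_k>0$ for all $k\ge 1$: the base case is $M\mathbf 1=(d+s,\,r+s)^{\trans}$, and $M(a_k,b_k)^{\trans}=(da_k+sb_k,\,ra_k+sb_k)^{\trans}$ has positive first entry and gap $(r-d)a_k>0$. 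Hence \cref{cor:uniquenessalt} shows $H_0$, and therefore $H$, is strongly Hoffman-London.

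The reason for the detour through $R_0$ — and the main obstacle to a more direct argument — is that \cref{thm:ordering_framework} generally cannot be applied to $H$ itself: when $R$ is not vertex-transitive the classes of $H$ refine $V(R)$ into the orbits of $R$, and the automorphic similarity matrix of $H$ then has the orbit matrix of $R$ as a principal submatrix, which under no reordering need have the increasing columns property. (For instance, if $R$ is loopless, regular with at least one edge, and has trivial automorphism group, its orbit matrix is just its adjacency matrix, whose final column would have to be nondecreasing, forcing an isolated vertex — a contradiction.) The substitution is legitimate precisely because $\hom$ of a forest into a $d$-regular graph on $r$ vertices is independent of which such graph is chosen, and it collapses everything onto the harmless $2\times2$ matrix above.
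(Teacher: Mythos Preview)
Your proof is correct, and it goes further than the paper does. The paper does not prove \cref{thm:dom_reg} in full; it attributes the result to Engbers and Galvin and, in \cref{ssec:two_auto_classes}, uses \cref{cor:two-auto} to reprove only the special case in which the underlying regular graph $R$ is vertex-transitive. Immediately afterward, the paper remarks via \cref{ex:folkman} that \cref{thm:ordering_framework} applied directly to $H$ cannot yield the general statement: for regular but non-vertex-transitive $R$ such as the Folkman graph, no ordering of the automorphic similarity classes of $H$ gives the increasing columns property.

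Your reduction sidesteps this obstruction. The identity $\hom(T,H)=\sum_{A\subseteq V(T)}\hom(T[A],R)\,s^{n-|A|}$, together with the fact (from \cref{obs:regular} applied componentwise) that $\hom(F,R)$ for a forest $F$ depends only on the multiset of its component sizes and on $(r,d)$, shows that $\hom(T,H)$ depends on $R$ only through $(r,d)$ whenever $T$ is a tree. Swapping $R$ for a vertex-transitive $d$-regular graph $R_0$ on $r$ vertices---a circulant, possibly with loops, exists for every $0\le d\le r$---therefore preserves all the homomorphism counts of interest, and the resulting $H_0$ lands in the two-class setting where \cref{thm:ordering_framework} and \cref{cor:uniquenessalt} apply cleanly. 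This is a genuinely different route: it yields a self-contained proof of the full Engbers--Galvin theorem using only the machinery of this paper, whereas the paper itself proves only the vertex-transitive special case and cites \cite{EngbersGalvin2017} for the rest.
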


A special case of \cref{thm:dom_reg}---that $H$ is strongly Hoffman-London when it is obtained from an empty (edgeless) graph by adding any non-zero number of looped dominating vertices, or equivalently when it is the join of empty graph and a fully looped complete graph---can also be easily derived from earlier work of Wingard \cite[Theorem 5.1 and Theorem 5.2]{Wingard1995} on independent sets of fixed size in trees. 

Csikv\'ari and Lin~\cite{CsikvariLin2015} showed that paths and stars are Hoffman-London.
\begin{theorem}[Csikv\'ari, Lin~\cite{CsikvariLin2015}] \label{thm:paths_and_stars_as_H}
If $H$ is either a path or a star (on any number of vertices), then for all $n$ and all $T_n \in {\mathcal T}_n$ we have
\[\hom(P_n,H) \le \hom(T_n,H).\]
\end{theorem}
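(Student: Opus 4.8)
The plan is to handle the star family and the path family separately. A star $K_{1,b}$ is a complete bipartite graph, so it is Hoffman-London by \cref{thm:complete_bipartite}; when $b\ge 2$ the minimizing trees are precisely those with a balanced bipartition, which includes $P_n$. Thus the star case is subsumed by a result already proved above, and the rest of the proof concerns $H=P_m$.

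\emph{Even paths.} For $H=P_{2r}$ the orbit partition sorts the vertices by distance to the nearer endpoint, giving classes $H^1,\dots,H^r$ in which $H^r$ is the pair of central vertices (which are adjacent to each other). In this order the automorphic similarity matrix $M=M(P_{2r})$ is tridiagonal --- $0$ on the diagonal except $m_{r,r}=1$, and $1$s on the two off-diagonals --- and a direct check shows that each terminal column sum $\sum_{j=c}^{r}m_{i,j}$ is $0$, $1$, or $2$ and is non-decreasing in $i$, i.e.\ $M$ has the increasing columns property. \cref{thm:ordering_framework} then gives that $P_{2r}$ is Hoffman-London, and \cref{cor:uniquenessalt} (comparing $\homclass{v}{1}{P_s}{H}$ with $\homclass{v}{2}{P_s}{H}$ for an endvertex $v$) upgrades this to strongly Hoffman-London. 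This reproves the path case of \cite{CsikvariLin2015} for even $n$ --- the ``partial recovery'' referred to in the introduction.

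\emph{Odd paths --- the obstacle.} For $H=P_{2r+1}$ with $r\ge 2$, \cref{thm:ordering_framework} provably does not apply: the two endpoints form the unique similarity class of degree $1$, so the row sums of $M$ are non-decreasing only when that class is placed first, and then no ordering of the remaining classes makes the last column of $M$ non-decreasing. A genuinely different argument is therefore needed, and this is the substance of \cite{CsikvariLin2015}. The tools here still yield partial progress: returning to the identity $\hom(T^{KC},H)-\hom(T,H)=\sum_{i<j}(\ell_j-\ell_i)(r_j-r_i)p^t_{i,j}$ from the proof of \cref{thm:ordering_framework} and labelling the classes of $P_{2r+1}$ by distance to the nearer endpoint, any path between a class at distance $d$ and one at distance $d'$ has length $\equiv d+d'\pmod 2$, so if the bare path of the KC move has an odd number of vertices then $p^t_{i,j}>0$ forces the two classes to lie at distances of the same parity; and although $M(P_{2r+1})$ fails the increasing columns property, it does preserve the weaker invariant ``non-negative, with the entries at even distances and the entries at odd distances each non-decreasing'', which by \cref{lem:tree_walk} forces every $\mathbf{h}(T,v)$ to satisfy it. Hence $(\ell_j-\ell_i)(r_j-r_i)\ge 0$ for every surviving pair, so a KC move along an odd bare path never decreases $\hom(\cdot,P_{2r+1})$.

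The main obstacle is the complementary case: a KC move whose bare path has an even number of vertices (unavoidable --- e.g.\ the only KC move available from $P_4$ produces $S_4$ and has a $2$-vertex bare path), where the contributing pairs satisfy $i\not\equiv j\pmod 2$, term-by-term positivity fails, and one must instead bound the whole weighted sum. I expect this is where the real difficulty lies; \cite{CsikvariLin2015} dispatches it via a transfer-matrix (spectral) expansion of $\hom(T,P_m)$ combined with Csikv\'ari's monotonicity of the generalized tree shift, and a self-contained proof along the present lines would have to reproduce or replace that step.
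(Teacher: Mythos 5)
You correctly handle the star case (via \cref{thm:complete_bipartite}) and the even-path case (the tridiagonal automorphic similarity matrix and \cref{thm:ordering_framework}), and your diagnosis of exactly why the increasing-columns criterion provably fails for $P_{2r+1}$ with $r\ge 2$ is right. But you should be aware that the paper does not actually prove this theorem --- it cites it wholesale from Csikv\'ari and Lin. The paper's own contribution is precisely the ``partial recovery'' you reconstruct: stars via \cref{thm:complete_bipartite}, even paths via \cref{thm:even_path_minimizer}, nothing for odd paths; indeed the paper says explicitly after \cref{thm:even_path_minimizer} that the odd case is known only by appeal to Csikv\'ari--Lin and that the set of minimizing trees there remains open. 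So your first two steps match the paper exactly, and the third step is where both you and the paper stop short of a self-contained proof.

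The genuine gap is therefore the odd-path, even-bare-path case, which you have honestly flagged. Your intermediate observation --- that $M(P_{2r+1})$ preserves the invariant ``non-negative with the even-distance entries and the odd-distance entries each separately non-decreasing,'' so by \cref{lem:tree_walk} every $\mathbf{h}(T,v)$ inherits it, and the parity of the $H$-walk of length $t-1$ then kills the cross terms in $\sum_{i<j}(\ell_j-\ell_i)(r_j-r_i)p^t_{i,j}$ when $t$ is odd --- is correct as far as it goes and is a nontrivial extension beyond what the paper records. However it does not bound the sum when $t$ is even, and since every path-to-star chain of KC moves must include even-$t$ moves, the argument cannot close. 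To complete a proof of the statement as written you would need either to import Csikv\'ari and Lin's spectral/transfer-matrix argument for odd $\ell$, or to replace the term-by-term positivity in \eqref{eq:non-negative} with a bound on the full weighted sum for even $t$ --- neither of which is present in this paper or in your sketch.
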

Note that Csikv\'ari and Lin do not comment on when paths and stars are strongly Hoffman-London. We will address this question for paths in \cref{ssec:paths}, specifically after the proof of \cref{thm:even_path_minimizer}. For stars, note that \cref{thm:complete_bipartite} shows that $S_k$ is not strongly Hoffman-London for any $k$.

\subsection{Target graphs with two automorphic similarity classes} \label{ssec:two_auto_classes}

In this section we specialize \cref{thm:ordering_framework} to the case in which $H$ has exactly two automorphic similarity classes and is not a regular graph. (Note that by \cref{obs:regular}, if $H$ is regular, then $\hom(T_n,H)$ is independent of $T_n$, so $H$ is Hoffman-London but not strongly Hoffman-London.) Recall that automorphic similarity classes partition the vertex set, and so in this case all vertices of $H$ are in either $H^1$ or $H^2$. We have the following corollary of \cref{thm:ordering_framework} and \cref{cor:uniquenessalt}. 
\begin{corollary} \label{cor:two-auto}
Let $H$ be a target graph that is not regular and that has two automorphic similarity classes, $H^1$ and $H^2$, ordered so that the (common) degree of any vertex in $H^1$ is smaller that the (common) degree of any vertex in $H^2$. If $m_{1,2} \le m_{2,2}$, then $H$ is Hoffman-London. If, further, there is a vertex in $H$ that has a neighbor in $H^1$ and a neighbor in $H^2$, then $H$ is strongly Hoffman-London.
\end{corollary}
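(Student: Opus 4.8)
The plan is to derive \cref{cor:two-auto} directly from \cref{thm:ordering_framework} and \cref{cor:uniquenessalt} by verifying their hypotheses in the two-class setting. First I would set up the $2 \times 2$ automorphic similarity matrix $M = \begin{bmatrix} m_{1,1} & m_{1,2} \\ m_{2,1} & m_{2,2} \end{bmatrix}$, where the classes are ordered so that $\deg(v) = m_{1,1} + m_{1,2} \le m_{2,1} + m_{2,2} = \deg(w)$ for $v \in H^1$, $w \in H^2$. To apply \cref{thm:ordering_framework} I must check that $M$ has the increasing columns property, which for a $2 \times 2$ matrix amounts to two inequalities: the terminal segment consisting of the full rows, $m_{1,1} + m_{1,2} \le m_{2,1} + m_{2,2}$ (this is exactly the degree condition, so it holds by the choice of ordering), and the terminal segment consisting of just the last column, $m_{1,2} \le m_{2,2}$ (this is precisely the hypothesis of the corollary). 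So \cref{thm:ordering_framework} applies and $H$ is Hoffman-London.

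For the strong conclusion I would invoke \cref{cor:uniquenessalt}. Here the only plausible obstruction is the degenerate possibility that $P_t$ admits no $H$-coloring at all for some $t$, or that every $H$-coloring of $P_t$ sends both endpoints to the same class; I would handle this using the hypothesis that some vertex $z \in H$ has a neighbor $x \in H^1$ and a neighbor $y \in H^2$. For each $t \ge 2$ I would set $\{a(t), b(t)\}$ so that $a(t)$ corresponds to the smaller of $\homclass{v}{1}{P_s}{H}$, $\homclass{v}{2}{P_s}{H}$ — and here I would first argue that the strict inequality $\homclass{v}{2}{P_s}{H} > \homclass{v}{1}{P_s}{H}$ holds for all $s \ge 2$ (so that in fact $a(t) = 1$, $b(t) = 2$ for every $t$). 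To see this, recall from the proof of \cref{thm:ordering_framework} that $\mathbf{h}(P_s, v) = M^{s-1}\mathbf{1}$, and I would show by induction on $s$ that the second entry strictly exceeds the first: the base case $s = 2$ gives entries $m_{1,1}+m_{1,2}$ and $m_{2,1}+m_{2,2}$, and since $H$ is not regular and $M$ has the increasing columns property, strictness of $\deg(w) > \deg(v)$ holds for at least one adjacent pair, hence (by the equitable-partition argument and connectedness considerations) for all — I need to be a little careful here and argue that non-regularity forces $m_{1,1}+m_{1,2} < m_{2,1}+m_{2,2}$ as a strict inequality. The inductive step then follows from \cref{lem:matrix_prop} together with a short argument that multiplying a strictly increasing nonnegative vector by $M$ keeps it strictly increasing, using $m_{1,2} \le m_{2,2}$ and the strict row-sum inequality.

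It remains to produce, for each $t \ge 2$, an $H$-coloring of $P_t$ with one endpoint in $H^1$ and the other in $H^2$, i.e. to show $p^t_{1,2} > 0$. This is where I would use the vertex $z$ with neighbors $x \in H^1$, $y \in H^2$: colorings of $P_t = z_1 z_2 \cdots z_t$ of the form $x, z, x, z, \ldots$ or $x, z, y, z, \ldots$ — more precisely, I would note that $z$ has a neighbor in each class, so I can walk a path that alternates using $z$ as an intermediate vertex and arrange the two endpoints to land in different classes (for $t = 2$, the edge $xz$ with $x \in H^1$ and, if $z \in H^2$, we are done; if $z \in H^1$ I instead use $zy$ with $y \in H^2$; for larger $t$, pad with $\ldots, x, z, x, z, \ldots$ or $\ldots, z, y$ as needed to fix the parity and the endpoint classes). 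A clean way to phrase this: the existence of $z$ shows $m_{1,2} \ge 1$ or $m_{2,1} \ge 1$, i.e. there is at least one edge between $H^1$ and $H^2$; combined with the fact that $H$ has at least one edge and is connected within the relevant component, one can reach both classes along a path of any prescribed length $\ge 2$ by detouring back and forth across a fixed inter-class edge, so $p^t_{1,2} > 0$ for all $t \ge 2$. With all hypotheses of \cref{cor:uniquenessalt} verified (taking $a(t) = 1$, $b(t) = 2$), we conclude $H$ is strongly Hoffman-London.

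The main obstacle I anticipate is not any deep inequality but rather the bookkeeping around strictness: promoting the weak inequalities $m_{1,2} \le m_{2,2}$ and $\deg(v) \le \deg(w)$ to the strict statement $\homclass{v}{1}{P_s}{H} < \homclass{v}{2}{P_s}{H}$ for all $s$ using only non-regularity, and carefully constructing the alternating path colorings of every length $t \ge 2$ with endpoints in prescribed distinct classes. Both are elementary but need the non-regularity and inter-class-edge hypotheses to be threaded through precisely; I would isolate the strict-monotonicity claim as a small internal sublemma to keep the argument readable.
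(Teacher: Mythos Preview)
Your plan is correct and matches the paper's proof essentially step for step: verify the two increasing-columns inequalities to invoke \cref{thm:ordering_framework}, then for the strong conclusion show $\homclass{v}{1}{P_s}{H} < \homclass{v}{2}{P_s}{H}$ by induction on $s$ and construct the required $H$-colorings of $P_t$ by alternating through the vertex with neighbors in both classes. Two small cleanups: the strict degree inequality $m_{1,1}+m_{1,2} < m_{2,1}+m_{2,2}$ is already part of the hypothesis (``smaller''), so there is nothing to promote; and the paper handles both the base case and inductive step at once via the rewriting $M\begin{bmatrix}a\\b\end{bmatrix} = \begin{bmatrix}a(m_{1,1}+m_{1,2}) + (b-a)m_{1,2}\\ a(m_{2,1}+m_{2,2}) + (b-a)m_{2,2}\end{bmatrix}$, which immediately gives $0 < a' < b'$ from $0 < a \le b$ and makes your proposed sublemma unnecessary.
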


\begin{proof}
To see that $H$ is Hoffman-London, note that for any vertices $x \in H^1$ and $y \in H^2$ we have  
\begin{equation} \label{inq:two_classes}
\deg(x) = m_{1,1}+m_{1,2} < m_{2,1}+m_{2,2} = \deg(y).
\end{equation}
Together with $m_{1,2} \le m_{2,2}$, inequality \eqref{inq:two_classes} shows that the automporphic similarity matrix of $H$ satisfies the increasing columns property and so is Hoffman-London by \cref{thm:ordering_framework}.

We now use \cref{cor:uniquenessalt} to establish that if $H$ has a vertex that is adjacent to vertices in both $H^1$ and $H^2$, then $H$ is strongly Hoffman-London. Observe that for $0 < a \le b$ we have
\begin{equation} \label{eq:two_classes}
\begin{bmatrix}
m_{1,1} & m_{1,2} \\
m_{2,1} & m_{2,2}
\end{bmatrix}
\begin{bmatrix}
a \\
b
\end{bmatrix}
=
\begin{bmatrix}
a(m_{1,1}+m_{1,2}) + (b-a) m_{1,2} \\
a(m_{2,1}+m_{2,2}) + (b-a) m_{2,2}
\end{bmatrix}
:= 
\begin{bmatrix}
a' \\
b'
\end{bmatrix},
\end{equation}
and our hypotheses that $m_{1,1}+m_{1,2} < m_{2,1}+m_{2,2}$ and $0 < a \le b$ imply that $0 < a' < b'$. We use this fact to show that for all $s \ge 2$, we have $\homclass{v}{1}{P_s}{H} < \homclass{v}{2}{P_s}{H}$. As in the statement of \cref{lem:tree_walk}, define $\mathbf{h}(T,v)$ to be the $k$-dimensional column vector whose \ith{i} entry is $\homclass{v}{i}{T}{H}$. When $s = 1$, the \ith{i} entry of $\mathbf{h}(P_1,v)$ is $\homclass{v}{i}{P_1}{H}$, the count of $H$-colorings of $P_1$ in which $v$ is mapped to a representative of $H^1$, so $\mathbf{h}(P_1,v) = \begin{bmatrix}1 & 1\end{bmatrix}^\trans$, where $v$ is the unique vertex of $P_1$. Then for $s \ge 2$, we have $\mathbf{h}(P_s,v) = M \mathbf{h}(P_{s-1},v)$, so by equation~\eqref{eq:two_classes}, we conclude $\homclass{v}{1}{P_s}{H} < \homclass{v}{2}{P_s}{H}$.

To apply \cref{cor:uniquenessalt} and conclude that $H$ is strongly Hoffman-London, it remains to show that for all $t \ge 2$ there is an $H$-coloring of $P_t$ that sends one end vertex to $H_1$ and the other to $H_2$. Let $x \in H$ be the vertex in $H$ that has a neighbor $y \in H^1$ and $z \in H^2$, noting that $x, y$, and $z$ need not necessarily be distinct. Suppose $x \in H^1$. If $t$ is even, then we can map the vertices of $P_t$ alternately to $x$ and $z$; if $t$ is odd, then we can map the first vertex of $P_t$ to $y$, the second to $x$, and then proceed as in the even case. The argument for $x \in H^2$ follows analogously.    
\end{proof}

We can use \cref{cor:two-auto} to recover a portion of \cref{thm:dom_reg}. Specifically, \cref{cor:two-auto} yields \cref{thm:dom_reg} in the case when looped dominating vertices are added to a vertex-transitive (and so necessarily regular) graph. Although the result below is weaker than \cref{thm:dom_reg}, we include it as it makes our classification of target graphs on three vertices (see \cref{sec:constraint-3-vertices}) self-contained. 

\begin{proof}[Proof of special case of \cref{thm:dom_reg}]
Let $H$ be obtained from a vertex transitive graph, say $H_0$, by adding some number of looped dominating vertices. If either the number of looped dominating vertices added is zero or $H_0$ is a fully looped complete graph, then $H$ is regular, and so by \cref{obs:regular} $H$ is Hoffman-London but not strongly Hoffman-London. For the remainder of the proof, we assume that $H_0$ is regular but not a fully looped complete graph and at least one looped dominating vertex is added. Let $A$ be the vertex set of $H_0$ and let $B$ be the set of added looped dominating vertices. Let $k$ be the degree of vertices in $H_0$; note $k < |A|$.     

It is easy to check that $H$ has two automorphic similarity classes, namely $H^1=A$ and $H^2=B$. 
Let $x$ and $y$ be arbitrary vertices in $H^1$ and $H^2$, respectively. We have 
\[
\deg(x) = k + |B| < |A| + |B| = \deg(y)
\]
and 
\[
m_{1,2} = |B| = m_{2,2}. 
\]
We apply \cref{cor:two-auto} to conclude that $H$ is Hoffman-London. Furthermore, because any vertex in $H^2$ is adjacent to itself and to all vertices in $H^1$, \cref{cor:two-auto} also shows that $H$ is strongly Hoffman-London.
\end{proof}

We note that \cref{thm:ordering_framework} alone could not possibly imply the full strength of \cref{thm:dom_reg}, which we demonstrate in \cref{ex:folkman}.

\begin{example}\label{ex:folkman}

Consider the Folkman graph~\cite{Folkman1967}, a $4$-regular graph on $20$ vertices constructed from a $K_5$ by first subdividing each edge and then cloning each of the vertices of the original $K_5$; see \cref{fig:folkman} where the vertices of the original $K_5$ are white and those from subdivided edges are black.

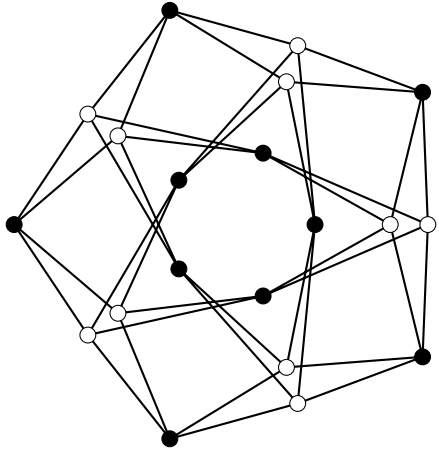
\begin{figure}[ht]
    \centering
    \begin{tikzpicture}
        \coordinate (w1) at (1,0);
        \coordinate (w2) at (.31,.95);
        \coordinate (w3) at (-.81,.59);
        \coordinate (w4) at (-.81,-.59);
        \coordinate (w5) at (.31,-.95);

        \coordinate (v11) at (2,0);
        \coordinate (v12) at (2.5,0);
        \coordinate (v21) at (0.62,1.9);
        \coordinate (v22) at (0.77,2.38);
        \coordinate (v31) at (-1.62,1.18);
        \coordinate (v32) at (-2.02,1.47);
        \coordinate (v41) at (-1.62,-1.18);
        \coordinate (v42) at (-2.02,-1.47);
        \coordinate (v51) at (0.62,-1.9);
        \coordinate (v52) at (0.77,-2.38);

        \coordinate(x1) at (2.43,1.76);
        \coordinate(x2) at (-.93,2.85);
        \coordinate(x3) at (-3,0);
        \coordinate(x4) at (-.93,-2.85);
        \coordinate(x5) at (2.43,-1.76);

        \draw[thick,black] (w1) -- (v21) -- (w3) -- (v41) -- (w5) -- (v11) -- (w2) -- (v31) -- (w4) -- (v51) -- (w1) -- (v22) -- (w3) -- (v42) -- (w5) -- (v12) -- (w2) -- (v32) -- (w4) -- (v52) -- (w1);
        \draw[thick,black] (x1) -- (v21) -- (x2) -- (v31) -- (x3) -- (v41) -- (x4) -- (v51) -- (x5) -- (v11) -- (x1) -- (v22) -- (x2) -- (v32) -- (x3) -- (v42) -- (x4) -- (v52) -- (x5) -- (v12) -- (x1);

        \draw[fill=black] (w1) circle (3pt);
        \draw[fill=black] (w2) circle (3pt);
        \draw[fill=black] (w3) circle (3pt);
        \draw[fill=black] (w4) circle (3pt);
        \draw[fill=black] (w5) circle (3pt);

        \draw[fill=white,draw=black] (v11) circle (3pt);
        \draw[fill=white,draw=black] (v12) circle (3pt);
        \draw[fill=white,draw=black] (v21) circle (3pt);
        \draw[fill=white,draw=black] (v22) circle (3pt);
        \draw[fill=white,draw=black] (v31) circle (3pt);
        \draw[fill=white,draw=black] (v32) circle (3pt);
        \draw[fill=white,draw=black] (v41) circle (3pt);
        \draw[fill=white,draw=black] (v42) circle (3pt);
        \draw[fill=white,draw=black] (v51) circle (3pt);
        \draw[fill=white,draw=black] (v52) circle (3pt);

        \draw[fill=black] (x1) circle (3pt);
        \draw[fill=black] (x2) circle (3pt);
        \draw[fill=black] (x3) circle (3pt);
        \draw[fill=black] (x4) circle (3pt);
        \draw[fill=black] (x5) circle (3pt);
    \end{tikzpicture}
    \caption{The Folkman graph.} \label{fig:folkman}
\end{figure} 

The Folkman graph is bipartite and has two automorphic similarity classes, specifically the two bipartition classes. By \cref{thm:dom_reg}, adding a looped dominating vertex to the Folkman graph produces an $H$ that is Hoffman-London. This $H$ has three automorphic similarity classes: the two bipartition classes of the Folkman graph, each of size ten, which we call $A$ and $B$, and the looped dominating vertex, which we call $C$. Vertices in $A$ (respectively, $B$) have no neighbors in $A$ (respectively, $B$), four neighbors in $B$ (respectively, $A$), and one neighbor in $C$. The vertex in $C$ has ten neighbors in $A$, ten in $B$ and one in $C$. So if we set $A=H^1$, $B=H^2$ (or $A=H^2$, $B=H^1$) and $C=H^3$, the corresponding automorphic similarity matrix is 
\[
M = \begin{bmatrix}
0 & 4 & 1 \\
4 & 0 & 1 \\
10 & 10 & 1
\end{bmatrix},
\]
which fails the increasing columns property of \cref{thm:ordering_framework}. Setting $C=H^1$ or $C=H^2$ also produces automorphic similarity matrices that fail the increasing columns property, meaning that \cref{thm:ordering_framework} cannot be used to conclude that $H$ is Hoffman-London.
\end{example}

As an example of the utility of \cref{cor:two-auto} to obtain new families of Hoffman-London graphs, consider the family of constraint graphs $H(a,b,\ell)$, defined for $a, b \ge 1$ and $\ell \ge 0$. Start with a clique on $b$ vertices. At each vertex $v$ of the clique, append $\ell$ copies of a clique on $a$ vertices, with $v$ the only vertex in common to the $\ell$ appended cliques; see \cref{fig:habl}. This family generalizes many well known families of graphs; for example, $H(2,1,\ell)$ is the star graph, $H(2,2,\ell)$ is the {\it balanced double star}---an edge with $\ell$ edges appended to each endpoint, and $H(3,1,\ell)$ is the {\it fan graph}---a collection of triangles sharing a single common vertex. More generally, $H(a,1,\ell)$ is a collection of $\ell$ cliques on $a$ vertices each, all sharing a single common vertex; following standard notation from topology, we call this a {\it bouquet of $\ell$ $a$-cliques}. 

\begin{figure}[ht]
    \centering
    \begin{tikzpicture}
        \coordinate (Kb) at (0,0);
        \draw[black, fill=white] (Kb) circle (1.5);
        \node at (Kb) {$K_b$};

        \draw[thick,dotted] (1.4,2.5) to [out=-5,in=100] (2,1.9);
        \coordinate (v1) at (0.75,1.3);
        \draw[fill=black] (v1) circle (3pt);
        \coordinate (v2) at (1.39,-0.57);
        \draw[fill=black] (v2) circle (3pt);
        \draw[thick,black] (v1) to [out=105,in=150,loop,style={min distance=100pt}] (v1);
        \draw[thick,black] (v1) to [out=60,in=105,loop,style={min distance=100pt}] (v1);
        \draw[thick,black] (v1) to [out=-15,in=30,loop,style={min distance=100pt}] (v1);
        \node at (0,2.2) {$K_a$};
        \node at (0.9,2.5) {$K_a$};
        \node at (2,1.5) {$K_a$};
        \node[label=right:{$\ell$ times}] at (1.7,2.4) {};

        \draw[thick,dotted] (2.6,-1.3) to [out=-95,in=0] (2,-1.7);
        \draw[thick,black] (v2) to [out=10,in=55,loop,style={min distance=100pt}] (v2);
        \draw[thick,black] (v2) to [out=-35,in=10,loop,style={min distance=100pt}] (v2);
        \draw[thick,black] (v2) to [out=-100,in=-55,loop,style={min distance=100pt}] (v2);
        \node at (2.3,0) {$K_a$};
        \node at (2.7,-0.9) {$K_a$};
        \node at (1.7,-1.9) {$K_a$};
        \node[label=right:{$\ell$ times}] at (2.3,-1.7) {};
    \end{tikzpicture}
    \caption{A visualization of the family of graphs $H(a,b,\ell)$.} \label{fig:habl}
\end{figure}
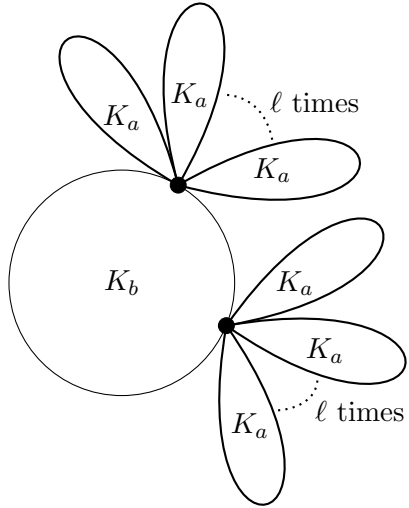

For some choices of parameters, it is straightforward to verify whether $H(a,b,\ell)$ is Hoffman-London. In the cases $\ell=0$, $\ell \ge 1$ and $a=1$, or $\ell=1$ and $b=1$, we see $H(a,b,\ell)$ is just a complete graph, which is regular and so Hoffman-London but not strongly Hoffman-London. If $\ell \ge 2$, $a = 2$, and $b=1$ then $H(a,b,\ell)$ is, as observed above, a star, and so is Hoffman-London by \cref{thm:paths_and_stars_as_H}, although, as we saw in \cref{thm:complete_bipartite}, it is not strongly Hoffman-London. For many other choices of parameters, we can use \cref{cor:two-auto}.      

\begin{corollary} \label{cor:clique-creatures}
For all $a, b \ge 2$ and $\ell \ge 1$, the target graph $H(a,b,\ell)$ is strongly Hoffman-London.
\end{corollary}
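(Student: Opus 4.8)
The plan is to apply \cref{cor:two-auto} verbatim, so the work is to identify the automorphic similarity structure of $H=H(a,b,\ell)$ and verify the two numerical hypotheses. First I would fix terminology: call the $b$ vertices of the central clique the \emph{central} vertices and the remaining $b\ell(a-1)$ vertices the \emph{peripheral} vertices. Any permutation of the central vertices extends to an automorphism (each central vertex carries an identical bouquet of $\ell$ copies of $K_a$), the $\ell$ appended cliques at a fixed central vertex may be permuted, and within each appended $K_a$ the $a-1$ peripheral vertices may be permuted; composing these moves shows any peripheral vertex can be sent to any other. Conversely, no automorphism can interchange a central and a peripheral vertex, since (as computed below) they have different degrees. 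Hence $H$ has exactly two automorphic similarity classes: $H^1$, the set of peripheral vertices, and $H^2$, the set of central vertices.

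Next I would record the relevant degrees and the automorphic similarity matrix. A peripheral vertex sees only the other $a-1$ vertices of its own $K_a$, so it has degree $a-1$, of which $a-2$ are peripheral and exactly $1$ (its central vertex) is central. A central vertex $v$ sees the $b-1$ other central vertices together with the $a-1$ other vertices in each of its $\ell$ copies of $K_a$, so it has degree $(b-1)+\ell(a-1)$, with $\ell(a-1)$ peripheral neighbors and $b-1$ central neighbors. Since $b\ge 2$ and $\ell\ge 1$ we have $(b-1)+\ell(a-1)\ge 1+(a-1)=a>a-1$, so $\deg(H^1)<\deg(H^2)$ and $H$ is not regular. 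With $H^1$ listed first,
\[
M \;=\; \begin{bmatrix} a-2 & 1 \\ \ell(a-1) & b-1 \end{bmatrix}.
\]

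To conclude I would check the hypotheses of \cref{cor:two-auto}. The ordering $H^1,H^2$ already lists the class of smaller degree first, and the remaining condition $m_{1,2}\le m_{2,2}$ is simply $1\le b-1$, which holds because $b\ge 2$; hence $H(a,b,\ell)$ is Hoffman-London. For the strengthening, observe that any central vertex $v$ has a peripheral neighbor (one exists as $\ell\ge 1$ and $a\ge 2$) and a central neighbor (one exists as $b\ge 2$), so $H$ contains a vertex adjacent to vertices in both $H^1$ and $H^2$; \cref{cor:two-auto} then gives that $H(a,b,\ell)$ is strongly Hoffman-London. There is no real obstacle here: the only care needed is the bookkeeping of which class has larger degree and the observation that the single inequality $b\ge 2$ drives the increasing columns property, and in particular the edge case $a=2$ (where peripheral vertices are leaves) causes no trouble because the vertex witnessing the strong Hoffman-London condition is taken to be central rather than peripheral.
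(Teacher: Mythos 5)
Your proof is correct and takes essentially the same approach as the paper: identify the two automorphic similarity classes (peripheral and central vertices), verify that the central class has strictly larger degree and that $m_{1,2}=1\le b-1=m_{2,2}$, and then invoke \cref{cor:two-auto} with a central vertex as the witness for the strong Hoffman-London property. You include slightly more bookkeeping than the paper (explicitly exhibiting the automorphisms that prove the orbit partition is as claimed, and writing out the full automorphic similarity matrix), but this is a matter of presentation rather than a different argument.
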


\begin{proof}
The graph $H(a,b,\ell)$ has two automorphic similarity classes: $H^2$ consisting of the $b$ vertices of the initial clique and $H^1$ consisting of all $\ell(a-1)$ added vertices. We have $m_{1,2} = 1 \le b-1 = m_{2,2}$, and if $x$ is any vertex in $H^1$ and $y$ any vertex in $H^2$ we have $\deg(x) = a-1 < b-1 + \ell(a-1) = \deg(y)$. We conclude that $H$ is Hoffman-London by \cref{cor:two-auto}. 

Any vertex in $H^2$ is adjacent to at least one other vertex in $H^2$ and at least one vertex in $H^1$, and so \cref{cor:two-auto} also demonstrates that $H$ is strongly Hoffman-London.
\end{proof}

The graphs $H(a,b,\ell)$ that are not covered by \cref{cor:clique-creatures} or the preceding discussion are those with $a \ge 3$, $b=1$, and $\ell \ge 2$. In these cases, as mentioned above, $H$ is a bouquet of $\ell$ $a$-cliques. Such target graphs have two automorphic similarity classes: one that includes only the central vertex and another that includes all other vertices. For this collection of target graphs, we cannot apply \cref{thm:ordering_framework}: if we choose the central vertex to be $H^1$, then the associated automorphic similarity matrix has $\ell(a-1) > 1$ as its $(1,2)$-entry and $1$ as its $(2,2)$-entry and fails the increasing columns property, while if we choose the central vertex to be $H^2$ then the associated automorphic similarity matrix has $1$ as its $(1,2)$-entry and $0$ as its $(2,2)$-entry, and so again fails the increasing columns property. It is unknown which graphs of this subfamily, if any, are Hoffman-London.

\subsection{Paths and looped paths as target graphs} \label{ssec:paths}

In this section, we consider paths and fully looped paths as target graphs. We begin by using \cref{thm:ordering_framework} to recover part of \cref{thm:paths_and_stars_as_H}, specifically the case when $H$ is a path with an even number of vertices. We also strengthen this portion of \cref{thm:paths_and_stars_as_H} by using \cref{cor:uniquenessalt} to show that target graphs $H$ in this case with at least four vertices are strongly Hoffman-London.

\begin{theorem} \label{thm:even_path_minimizer}
The path graph $P_{2m}$ is Hoffman-London for all $m \ge 1$ and strongly Hoffman-London for all $m \ge 2$.
\end{theorem}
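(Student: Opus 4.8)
The plan is to establish the Hoffman-London claim from \cref{thm:ordering_framework} and the strong version from \cref{cor:uniquenessalt}, after first pinning down the automorphic similarity structure of the target $H=P_{2m}$. Label the vertices of $P_{2m}$ as $1,2,\dots,2m$ along the path; the only nontrivial automorphism is the reflection $i\mapsto 2m+1-i$, so the automorphic similarity classes are $H^i=\{i,\,2m+1-i\}$ for $1\le i\le m$, each of size two, and I take vertex $i$ as the representative of $H^i$. A short neighbourhood computation shows that the associated automorphic similarity matrix $M$ is the $m\times m$ matrix with $0$s on the diagonal and $1$s on the first super- and sub-diagonals, with the single exception that $m_{m,m}=1$ (because the two vertices $m$ and $m+1$ comprising the central class $H^m$ are adjacent to one another). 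First I would verify that this $M$ has the increasing columns property. This is a routine check: each row has at most two nonzero entries, and passing from row $i$ to row $i+1$ moves each nonzero position weakly to the right (for $1\le i\le m-2$ both positions shift right by one; for $i=m-1$ the position $m-2$ shifts to $m-1$ while the corner position $m$ stays put), and shifting nonzero positions to the right cannot decrease any tail sum $\sum_{j=c}^{m}m_{i,j}$. With the increasing columns property in hand, \cref{thm:ordering_framework} gives that $P_{2m}$ is Hoffman-London for every $m\ge1$; for $m=1$ the matrix is $1\times1$ and the condition is vacuous, consistent with $P_2=K_2$ being regular (\cref{obs:regular}).

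For the strong version, $m=1$ is genuinely excluded since $K_2$ is regular and hence every tree on $n$ vertices has the same number of $K_2$-colorings. The case $m=2$ I would handle directly via \cref{cor:two-auto}: $P_4$ has exactly the two automorphic similarity classes described above, is not regular, satisfies $m_{1,2}=1=m_{2,2}$, and either internal vertex has a neighbour in each class, so $P_4$ is strongly Hoffman-London. The remaining and main case is $m\ge3$, which I would treat with \cref{cor:uniquenessalt}.

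For $m\ge3$ the crux is to show $\homclass{v}{1}{P_s}{H}<\homclass{v}{2}{P_s}{H}$ for all $s\ge2$, where $v$ is an endvertex of $P_s$. Let $W_k(u)$ denote the number of walks of length $k$ in $P_{2m}$ starting at vertex $u$; since homomorphisms $P_s\to P_{2m}$ are exactly walks of length $s-1$ and vertex $i$ represents $H^i$, we have $\homclass{v}{i}{P_s}{H}=W_{s-1}(i)$ for $1\le i\le m$ (independence of representative is \cref{lem:using_iso}). From the path structure, $W_k(1)=W_{k-1}(2)$ and $W_k(2)=W_{k-1}(1)+W_{k-1}(3)$ for $k\ge1$; moreover $W_j(1)\ge1$ for every $j\ge0$ (the alternating walk $1,2,1,2,\dots$), and $W_j(2)\le W_j(3)$ for every $j$, the latter being the statement that $\mathbf{h}(P_{j+1},v)$ is non-decreasing, proved inside \cref{thm:ordering_framework} from \cref{lem:matrix_prop} and the increasing columns property just verified. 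Chaining these, $W_k(2)=W_{k-1}(1)+W_{k-1}(3)\ge 1+W_{k-1}(2)=1+W_k(1)$, giving the strict inequality; the same monotonicity upgrades it to $\homclass{v}{1}{P_s}{H}<\homclass{v}{j}{P_s}{H}$ for every $j\ge2$ and $s\ge2$. To finish, for each $t\ge2$ I exhibit an $H$-coloring of $P_t$ sending one endvertex into $H^1$ and the other into some $H^j$ with $j\ge2$: for even $t$, the alternating walk $1,2,1,2,\dots$ on $t$ vertices ends at $2\in H^2$, so set $a(t)=1,b(t)=2$; for odd $t\ge3$, the walk $1,2,1,2,\dots,1,2,3$ of length $t-1$ ends at $3\in H^3$, so set $a(t)=1,b(t)=3$. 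Then \cref{cor:uniquenessalt} applies and yields that $P_{2m}$ is strongly Hoffman-London.

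The hard part is expected to be the bookkeeping around the corner entry $m_{m,m}=1$: it is exactly what forces the last row of $M$ to be handled separately in the increasing columns verification, and it is also what distinguishes $P_{2m}$ from paths of odd order (which would have a fixed central vertex and a singleton similarity class). A secondary point needing care is the repeated passage, via \cref{lem:using_iso}, between "$H$-colorings of $P_s$ with an endvertex in $H^i$'' and "walks starting at vertex $i$,'' and the correct isolation of the small cases $m\in\{1,2\}$ — which is why I route $m=2$ through \cref{cor:two-auto} rather than the walk-counting argument, where the role played by $H^3$ would collapse.
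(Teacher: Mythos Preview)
Your proposal is correct and follows essentially the same route as the paper: verify the increasing columns property for the automorphic similarity matrix of $P_{2m}$ to apply \cref{thm:ordering_framework}, then for $m\ge 2$ invoke \cref{cor:uniquenessalt} with $a(t)=1$ and $b(t)\in\{2,3\}$ chosen by the parity of $t$. Your treatment is slightly slicker in that you route $m=2$ through \cref{cor:two-auto} and cite the monotonicity of $\mathbf{h}(P_s,v)$ directly from the proof of \cref{thm:ordering_framework} (via \cref{lem:matrix_prop}) rather than re-deriving the full chain $\homclass{v}{1}{P_s}{H}<\homclass{v}{2}{P_s}{H}\le\cdots$ by a separate induction as the paper does; one small tidy-up: row~$1$ of $M$ has only a single nonzero entry, so your ``both positions shift right by one'' phrasing does not literally cover the transition from row~$1$ to row~$2$, though the tail-sum inequality there is immediate since the row sum jumps from $1$ to $2$.
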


\begin{proof}
Let $H = P_{2m}$ for some $m \ge 1$, specifically with vertices $u_1, \ldots, u_{2m}$ and edges $u_iu_{i+1}$ for $1 \le i \le 2m-1$. We claim that $P_{2m}$ admits exactly two automorphisms: the identity and the {\it reversal} map that sends $u_i$ to $u_{2m+1-i}$ for $i=1, \ldots, 2m$. To verify this claim, first note that an automorphism of $P_{2m}$ must preserve degrees and thus leaves must be mapped to leaves. If $u_1$ is mapped to $u_1$, then $u_2$, the unique neighbor of $u_1$, must be mapped to $u_2$, $u_3$ must be mapped to $u_3$, and so on, resulting in the identity map. If $u_1$ is instead mapped to $u_{2m}$, then $u_2$ must be mapped to $u_{2m-1}$, $u_3$ must be mapped to $u_{2m-2}$, and so on, giving resulting in the reversal map. It follows that $P_{2m}$ has $m$ automorphic similarity classes, each consisting of a pair of vertices $\{u_i,u_j\}$ such that $i+j=2m+1$. 

Set $H^i=\{u_i, u_{2m+1-i}\}$ for $i=1, \ldots, m$. The automorphic similarity matrix of $H$ with respect to this ordering of the automorphic similarity classes has the form 
\[ \begin{bmatrix}
0 & 1 & 0 & 0 & \cdots & 0 & 0 & 0\\
1 & 0 & 1 & 0 & \cdots & 0 & 0 & 0\\
0 & 1 & 0 & 1 & \cdots & 0 & 0 & 0\\
\vdots & \vdots & \vdots & \vdots & \ddots & \vdots & \vdots& \vdots\\
0 & 0 & 0 & 0 & \cdots & 1 & 0 & 1\\
0 & 0 & 0 & 0 & \cdots & 0 & 1 & 1
    \end{bmatrix}. \]
That is, it has $1$s on the first subdiagonal, $1$s on the first superdiagonal, a $1$ in the $(m,m)$ position, and $0$s everywhere else. It is straightforward to check that all terminal sums of the \ith{i} row are less than or equal to the corresponding terminal sums of the \ith{(i+1)} row for all $i < m$, and so this matrix satisfies the increasing columns property, establishing that $P_{2m}$ is Hoffman-London by \cref{thm:ordering_framework}.

We now turn to establishing that $P_{2m}$ is strongly Hoffman-London for $m \ge 2$. We seek to apply \cref{cor:uniquenessalt}. Towards this goal, we will show by induction on $t$ that
\begin{equation} \label{inq:paths}
\homclass{v}{1}{P_t}{P_{2m}} < \homclass{v}{2}{P_t}{P_{2m}} \le \homclass{v}{3}{P_t}{P_{2m}} \le \cdots \le \homclass{v}{m}{P_t}{P_{2m}},
\end{equation}
where $v$ is an endvertex of $P_t$.

When $t=2$, inequality \eqref{inq:paths} holds because $\homclass{v}{1}{P_2}{P_{2m}}=1$ and $\homclass{v}{i}{P_2}{P_{2m}}=2$ for $2 \le i \le m$. For $t > 2$, let $v'$ be the unique neighbor of $v$ in $P_t$. Using equation \eqref{eq:treewalkalgorithm-pathcase} from the proof of \cref{lem:tree_walk}, we have the recurrence relations
\begin{eqnarray*}
\homclass{v}{1}{P_t}{P_{2m}} & = & \homclass{v'}{2}{P_{t-1}}{P_{2m}}, \\
\homclass{v}{i}{P_t}{P_{2m}} & = & \homclass{v'}{i-1}{P_{t-1}}{P_{2m}} + \homclass{v'}{i+1}{P_{t-1}}{P_{2m}} \text{ when } 2 \le i \le m-1, \text{ and} \\
\homclass{v}{m}{P_t}{P_{2m}} & = & \homclass{v'}{m-1}{P_{t-1}}{P_{2m}} + \homclass{v'}{m}{P_{t-1}}{P_{2m}}.
\end{eqnarray*}
By induction, we have
\begin{equation}\label{inq:path_induction}
\homclass{v'}{1}{P_{t-1}}{P_{2m}} < \homclass{v'}{2}{P_{t-1}}{P_{2m}} \le \cdots \le \homclass{v'}{m}{P_{t-1}}{P_{2m}}.
\end{equation}
When $m=2$, we use the fact that $\homclass{v'}{1}{P_{t-1}}{P_{4}} > 0$ to say
\begin{eqnarray*}
\homclass{v}{1}{P_t}{P_{4}} &=& \homclass{v'}{2}{P_{t-1}}{P_{4}} \\
	&<& \homclass{v'}{1}{P_{t-1}}{P_{4}} + \homclass{v'}{2}{P_{t-1}}{P_{4}} \\
	&=& \homclass{v}{2}{P_t}{P_4}.
\end{eqnarray*}
which establishes inequality~\eqref{inq:paths}.

For $m \ge 3$, we first establish $\homclass{v}{1}{P_t}{P_{2m}} < \homclass{v}{2}{P_t}{P_{2m}}$. By inequality~\eqref{inq:path_induction}, we see that $\homclass{v'}{2}{P_{t-1}}{P_{2m}} \le \homclass{v'}{3}{P_{t-1}}{P_{2m}}$. Then, again using $\homclass{v'}{1}{P_{t-1}}{P_{2m}} > 0$, we have
\begin{eqnarray*}
\homclass{v}{1}{P_t}{P_{2m}} &=& \homclass{v'}{2}{P_{t-1}}{P_{2m}} \\
	&\le& \homclass{v'}{3}{P_{t-1}}{P_{2m}} \\
	&<& \homclass{v'}{1}{P_{t-1}}{P_{2m}} + \homclass{v'}{3}{P_{t-1}}{P_{2m}} \\
	&=& \homclass{v}{2}{P_t}{P_{2m}}.
\end{eqnarray*}
Next, we show $\homclass{v}{i}{P_t}{P_{2m}} \le \homclass{v}{i+1}{P_t}{P_{2m}}$ for $2 \le i \le m-2$ by using inequality~\eqref{inq:path_induction} to say
\begin{eqnarray*}
\homclass{v}{i}{P_t}{P_{2m}} &=& \homclass{v'}{i-1}{P_{t-1}}{P_{2m}} + \homclass{v'}{i+1}{P_{t-1}}{P_{2m}} \\
	&\le& \homclass{v'}{i}{P_{t-1}}{P_{2m}} + \homclass{v'}{i+2}{P_{t-1}}{P_{2m}} \\
	&=& \homclass{v}{i+1}{P_t}{P_{2m}}.
\end{eqnarray*}
Finally, we demonstrate $\homclass{v}{m-1}{P_t}{P_{2m}} \le \homclass{v}{m}{P_t}{P_{2m}}$ by once again using inequality~\eqref{inq:path_induction}:
\begin{eqnarray*}
\homclass{v}{m-1}{P_t}{P_{2m}} &=& \homclass{v'}{m-2}{P_{t-1}}{P_{2m}} + \homclass{v'}{m}{P_{t-1}}{P_{2m}} \\
	&\le& \homclass{v'}{m-1}{P_{t-1}}{P_{2m}} + \homclass{v'}{m}{P_{t-1}}{P_{2m}} \\
	&=& \homclass{v}{m}{P_t}{P_{2m}}.
\end{eqnarray*}
which establishes inequality~\eqref{inq:paths}.

Finally, we use inequality~\eqref{inq:paths} to show that $P_{2m}$ is strongly Hoffman-London. We consider the cases $m=2$ and $m > 2$ separately. For $m=2$ and $t \ge 2$, set $a(t)=1$ and $b(t)=2$. From inequality~\eqref{inq:paths}, we have $\homclass{v}{a(t)}{P_s}{P_{4}} < \homclass{v}{b(t)}{P_s}{P_{4}}$ for all $s \ge 2$. To apply \cref{cor:uniquenessalt} to show that $P_{4}$ is strongly Hoffman-London, it remains to show that for all $t \ge 2$ there is a $P_{4}$-coloring of $P_t$ that sends one endvertex $v$ of $P_t$ to some vertex in $H^1$ and the other endvertex $w$ to some vertex in $H^2$. For $t$ even, such a coloring is given by sending $v$ to $u_1\in H^1$, then sending the vertices of $P_t$ alternatively to $u_2$ and $u_1$, ending by sending $w$ to $u_2 \in H^2$. For $t$ odd, such a coloring is given by sending $v$ to $u_1\in H^1$, then sending the vertices of $P_t$ alternatively to $u_2$ and $u_1$ except for $w$, which is sent to $u_3 \in H^2$ instead of $u_1$.

For $m>2$, set $a(t)=1$ for $t \ge 2$, and set $b(t)=2$ for even $t \ge 2$ and set $b(t)=3$ for odd $t \ge 2$. From inequality~\eqref{inq:paths}, we again have $\homclass{v}{a(t)}{P_s}{P_{2m}} < \homclass{v}{b(t)}{P_s}{P_{2m}}$ for all $s \ge 2$. To exhibit a coloring of $P_t$ that sends $v$ to a vertex in $H^{a(t)}$ and $w$ to a vertex in $H^{b(t)}$, we proceed exactly as in the $m=2$ case: send $v$ to $u_1 \in H^1$ and proceed to alternate sending the vertices of $P_t$ to $u_2$ and $u_1$, ending by sending $w$ to $u_2\in H^2$ when $t$ is even and deviating by sending $w$ to $u_3 \in H^3$ instead of $u_1$ when $t$ is odd. 
\end{proof}

\cref{thm:even_path_minimizer} shows that $P_\ell$ is strongly Hoffman-London for all even $\ell \ge 4$,  and \cref{obs:regular} establishes that $P_2$ is not strongly Hoffman-London, because all trees admit the same number of $P_2$-colorings. What can be said of $P_\ell$ when $\ell$ is odd? As demonstrated in \cref{thm:complete_bipartite}, $P_3$ is not strongly Hoffman-London but in a different manner than $P_2$: trees with a balanced bipartition, including paths but not all trees, minimize $\hom(T_n,P_3)$. We know that paths are among the minimizers of $\hom(T_n,P_\ell)$ for odd $\ell$ due to Csikv\'ari and Lin~\cite{CsikvariLin2015}, but the precise set of trees that minimize the number of $P_\ell$-colorings for odd $\ell \ge 5$ remains open.

The {\it fully looped path} $P_n^\loop$ is the graph on vertex set $u_1, \ldots, u_n$ with edges $u_iu_{i+1}$ for $1 \le i \le n-1$ and $u_iu_i$ for each $1 \le i \le n$. Unlike for unlooped paths, for which we were only able to apply our techniques when the number of vertices is even, we can completely classify all fully looped paths.

\begin{theorem} \label{prop:looped_path}
The fully looped path $P_n^\loop$ is Hoffman-London for all $n \ge 1$ and strongly Hoffman-London for all $n \ge 3$.
\end{theorem}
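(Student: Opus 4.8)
The plan is to follow the template of \cref{thm:even_path_minimizer}: identify the automorphic similarity classes of $P_n^\loop$, write out its automorphic similarity matrix $M$, check that (for a suitable ordering) $M$ has the increasing columns property so that \cref{thm:ordering_framework} applies, and then feed the path recursion of \cref{lem:tree_walk} into \cref{cor:uniquenessalt} to obtain the strong conclusion. An automorphism of $P_n^\loop$ must preserve degrees, hence fixes or swaps the two leaves; since loops are preserved automatically, the automorphism group is exactly $\{\mathrm{id},\rho\}$ with $\rho(u_i)=u_{n+1-i}$, just as for the unlooped path. Thus for $n=2m$ the similarity classes are $H^i=\{u_i,u_{2m+1-i}\}$ for $1\le i\le m$, and for $n=2m+1$ they are $H^i=\{u_i,u_{2m+2-i}\}$ for $1\le i\le m$ together with the singleton $H^{m+1}=\{u_{m+1}\}$. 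The cases $n\in\{1,2\}$ give regular graphs, so \cref{obs:regular} already shows they are Hoffman-London (and not strongly so); for the rest of the argument I take $n\ge 3$.

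Next I would compute $M$ with the classes ordered by index $i$. Because every vertex carries a loop, each interior vertex contributes a $1$ to positions $i-1$, $i$, and $i+1$ of its row, while $u_1$ contributes $1$'s only in positions $1$ and $2$; the sole deviation from the ``tridiagonal with all $1$'s on three diagonals'' pattern is a doubled entry at the spot where $\rho$ glues two classes together, namely the $(m,m)$ entry when $n=2m$ and the $(m+1,m)$ entry when $n=2m+1$. A direct check shows that for each fixed column $c$ the terminal sum $\sum_{j\ge c}m_{i,j}$, viewed as a function of the row index $i$, has the form $0,\dots,0,1,2,3,\dots,3$ (with degenerate versions at the first and last columns, where it looks like $2,3,\dots,3$ and $0,\dots,0,1,1$ respectively), hence is non-decreasing in $i$. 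Therefore $M$ has the increasing columns property, and \cref{thm:ordering_framework} gives that $P_n^\loop$ is Hoffman-London for all $n$.

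For the strong conclusion when $n\ge 3$, I would apply \cref{cor:uniquenessalt} with $a(t)=1$ and $b(t)=2$ for every $t\ge 2$. A coloring of $P_t$ with one endvertex in $H^1$ and the other in $H^2$ is immediate: colour one endvertex by $u_1$ and every other vertex by $u_2$, which is legal since $u_1\sim u_2$ and $u_2$ is looped. The remaining requirement is $\homclass{v}{1}{P_s}{P_n^\loop}<\homclass{v}{2}{P_s}{P_n^\loop}$ for all $s\ge 2$, which I would prove by induction on $s$ using the recursion $\mathbf h(P_s,v)=M\mathbf h(P_{s-1},v)$ of equation~\eqref{eq:treewalkalgorithm-pathcase}, writing $h_i$ for the $i$th entry of $\mathbf h(P_{s-1},v)$. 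The base case $s=2$ is $\mathbf h(P_2,v)=M\mathbf 1=(2,3,3,\dots,3)^\trans$. For the inductive step, \cref{lem:matrix_prop} guarantees that $\mathbf h(P_{s-1},v)$ is non-negative and non-decreasing, and reading off the first two rows of $M$ shows that $(\mathbf h(P_s,v))_2-(\mathbf h(P_s,v))_1$ equals $h_1$ when $n=3$, equals $h_2$ when $n=4$, and equals $h_3$ when $n\ge 5$; in each case this is the number of $H$-colorings of $P_{s-1}$ with one endvertex fixed at a looped vertex of $H$, hence at least $1$ (a constant colouring works). This yields the strict inequality, so \cref{cor:uniquenessalt} applies and $P_n^\loop$ is strongly Hoffman-London for all $n\ge 3$. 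When $n\in\{3,4\}$ there are only two classes, and one could alternatively invoke \cref{cor:two-auto}.

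The one step needing genuine care---more bookkeeping than difficulty---is the verification of the increasing columns property uniformly across the even/odd split and at the two boundary rows (the degree-$2$ first row and the row bearing the doubled entry). I would organize this with a single case analysis on where the column index $c$ falls relative to the band $\{i-1,i,i+1\}$ of each row, which makes the terminal-sum sequence transparent; everything else is routine.
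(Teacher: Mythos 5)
Your proposal is correct and follows the same overall architecture as the paper's proof: identify the two-element automorphism group of $P_n^\loop$, read off the automorphic similarity classes and the (tridiagonal-plus-boundary-correction) automorphic similarity matrix, check the increasing columns property to invoke \cref{thm:ordering_framework}, and then verify the hypotheses of \cref{cor:uniquenessalt} to upgrade to strongly Hoffman-London. Two small differences from the paper's proof are worth noting. First, you handle $n=3$ uniformly within the increasing-columns framework, whereas the paper dispatches $n=3$ separately by appealing to \cref{thm:dom_reg}; your unification is arguably slightly cleaner. Second, for the strict inequality $\homclass{v}{1}{P_s}{P_n^\loop} < \homclass{v}{2}{P_s}{P_n^\loop}$ you read off the first two rows of $M$ in the recursion $\mathbf h(P_s,v)=M\mathbf h(P_{s-1},v)$ and identify the difference as $h_1$, $h_2$, or $h_3$ depending on $n$, which is positive because the target vertex in question is looped; the paper instead exhibits an explicit non-surjective injection from $\Homclass{v}{1}{P_s}{P_n^\loop}$ into $\Homclass{v}{2}{P_s}{P_n^\loop}$ by changing only the color at $v$. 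These are equivalent in content --- both locate the same strictly positive term.

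One small inaccuracy in your write-up: you state that the last-column terminal sum ``looks like $0,\dots,0,1,1$'' for both parities, but for even $n=2m$ the doubled entry sits at $(m,m)$, so the last column reads $0,\dots,0,1,2$; the $0,\dots,0,1,1$ pattern is the odd case, where the doubled entry is at $(m+1,m)$ and hence does not affect the last column. This is only a descriptive slip --- both sequences are non-decreasing, so the increasing columns conclusion and the rest of your argument go through unchanged.
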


\begin{proof}
For $n = 1$ or $2$, $P_n^\loop$ is a fully looped complete graph and so \cref{obs:regular} tells us that $P_n^\loop$ is Hoffman-London but not strongly Hoffman-London. When $n=3$, $P_n^\loop$ is an instance of a regular graph with a looped dominating vertex added, and so applying \cref{thm:dom_reg} establishes that $P_n^\loop$ is strongly Hoffman-London.

For $n \ge 4$, we start by considering $n$ even, say $n=2m$. Exactly as for $P_{2m}$, we find that the automorphic similarity classes of $P_{2m}^\circ$ are $H^i=\{v_i,v_{2m+1-i}\}$, $i=1, \ldots, m$, each of size $2$, and the the associated automorphic similarity matrix has the form 
\[ \begin{bmatrix}
1 & 1 & 0 & 0 & \cdots & 0 & 0 & 0\\
1 & 1 & 1 & 0 & \cdots & 0 & 0 & 0\\
0 & 1 & 1 & 1 & \cdots & 0 & 0 & 0\\
\vdots & \vdots & \vdots & \vdots & \ddots & \vdots & \vdots& \vdots\\
0 & 0 & 0 & 0 & \cdots & 1 & 1 & 1\\
0 & 0 & 0 & 0 & \cdots & 0 & 1 & 2
    \end{bmatrix} \]
(i.e., it is the automorphic similarity matrix of $P_{2m}$ modified by the addition of the identity matrix $I_{2m}$). It is straightforward to check that all terminal sums of the \ith{i} row are less than or equal to the corresponding terminal sums of the \ith{(i+1)} row for all $i < m$, and so this matrix satisfies the increasing columns property and $P_{2m}^\circ$ is Hoffman-London. 

For $n \ge 5$ and odd, say $n=2m-1$, the automorphic similarity classes of $P_{2m-1}^\loop$ are $H^i=\{v_i,v_{2m-i}\}$, $i=1, \ldots, m-1$, each of size $2$, and $H^m=\{v_m\}$ (of size $1$). The structure of the corresponding automorphic similarity matrix is the same as in the even case, except that in the final row the $1$ and $2$ are flipped. That $M$ satisfies the increasing columns condition follows almost exactly as it did in the even case.

We have established that $P_n^\loop$ is Hoffman-London for all $n$ and strongly Hoffman-London for $n = 3$. To establish that $P_n^\loop$ is strongly Hoffman-London for $n \ge 4$, we will show that \cref{cor:uniquenessalt} can be applied. We first observe that for all $s \ge 2$ we have $\homclass{v}{1}{P_s}{P_n^\loop} < \homclass{v}{2}{P_s}{P_n^\loop}$, where $v$ is one of the endvertices of $P_s$. We can see this via a non-surjective injection from $\Homclass{v}{1}{P_s}{P_n^\loop}$ into $\Homclass{v}{2}{P_s}{P_n^\loop}$. For concreteness, we take a fixed $x \in H^1$ (necessarily one of the endvertices of $P_n^\loop$) to be the representative of $H^1$ that $v$ is mapped to, and take its unique neighbor $x' \in H^2$ to be the representative of $H^2$ that $v$ is mapped to. The injection is simple: given $f \in \Homclass{v}{1}{P_s}{P_n^\loop}$ with $f(v)=x$, map $f$ to $f'$ by changing the value at $v$ to $x'$. That $f' \in \Homclass{v}{2}{P_s}{P_n^\loop}$ (i.e., that $f'$ is actually a homomorphism) follows from the fact that $f'$ maps $v'$ (the unique neighbor of $v$ in $P_s$) to a neighbor of $x$, so either $x$ or $x'$, and both of these are neighbors of $x'$. To see that the injection is not a surjection, consider $f' \in \Homclass{v}{2}{P_s}{P_n^\loop}$ that sends $v$ to $x'$ and all other vertices of $P_s$ to $x''$, the neighbor of $x'$ in $H^2$. There is no $f \in \Homclass{v}{1}{P_s}{P_n^\loop}$ that maps onto $f'$ via our injection, since $x$ is not adjacent to $x''$ in $P_n^\loop$.

We next observe that for all $t \ge 2$ there is a $P_n^\loop$-coloring of $P_t$ that sends one endvertex to a vertex in $H^1$ and the other to a vertex in $H^2$---simply send one endvertex to $x$ and all the others to $x'$. That $P_n^\loop$ ($n \ge 4$) is strongly Hoffman-London now follows from \cref{cor:uniquenessalt}.
\end{proof}

We end our discussion of paths with a quick corollary of \cref{prop:looped_path} to the family of {\it ladder graphs}. The ladder graph $L_n$ has vertices $u_1, \ldots, u_n, v_1, \ldots, v_n$ and edges $u_iu_{i+1}$ and $v_iv_{i+1}$, for $1 \le i \le n-1$, and $u_iv_i$ for $1 \le i \le n$. 
\begin{corollary}
For $n \ge 1$, the ladder graph $L_n$ is Hoffman-London. For $n \ge 3$ it is strongly Hoffman-London.
\end{corollary}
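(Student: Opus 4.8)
The plan is to realize the ladder graph as a tensor product and then combine \cref{obs:cartesian_2} with \cref{prop:looped_path}. The key structural fact is that $L_n \cong P_n^\loop \times K_2$: the tensor product $P_n^\loop \times K_2$ is the bipartite double cover of $P_n^\loop$, in which each loop $u_iu_i$ becomes a ``rung'' joining the two copies of $u_i$ while each path edge $u_iu_{i+1}$ becomes a pair of ``diagonal'' edges between the copies. To make this precise I would write $K_2$ with vertex set $\{a,b\}$ and exhibit the isomorphism $\varphi \colon V(L_n) \to V(P_n^\loop) \times \{a,b\}$ given by $\varphi(u_i) = (u_i,a)$, $\varphi(v_i) = (u_i,b)$ for $i$ odd and $\varphi(u_i) = (u_i,b)$, $\varphi(v_i) = (u_i,a)$ for $i$ even. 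A routine check shows $\varphi$ carries each rung $u_iv_i$ and each rail edge $u_iu_{i+1}$, $v_iv_{i+1}$ of $L_n$ to an edge of $P_n^\loop \times K_2$, and since both graphs have exactly $3n-2$ edges, $\varphi$ is an isomorphism.

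With this identification, apply \cref{obs:cartesian_2} with $H = P_n^\loop$ and $R = K_2$, noting that $K_2$ is a regular graph of degree $1$ with at least one edge: for every order $N$, the trees in $\mathcal{T}_N$ that minimize $\hom(T_N, L_n)$ are exactly those that minimize $\hom(T_N, P_n^\loop)$. By \cref{prop:looped_path}, for every $n \ge 1$ the path $P_N$ is among the minimizers of $\hom(T_N, P_n^\loop)$, hence among the minimizers of $\hom(T_N, L_n)$, so $L_n$ is Hoffman-London; and for $n \ge 3$, $P_N$ is the \emph{unique} minimizer of $\hom(T_N, P_n^\loop)$, hence the unique minimizer of $\hom(T_N, L_n)$, so $L_n$ is strongly Hoffman-London.

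Essentially nothing here is difficult: the only point requiring attention is the isomorphism $L_n \cong P_n^\loop \times K_2$, which needs the parity-dependent relabeling above to be stated and verified carefully; the rest is a direct appeal to \cref{obs:cartesian_2} and \cref{prop:looped_path}. Alternatively one could cite the standard fact that $G \times K_2$ is the bipartite double cover of $G$ and observe that the double cover of $P_n^\loop$ is $L_n$, but writing out the bijection is equally short and keeps the argument self-contained.
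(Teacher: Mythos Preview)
Your proof is correct and follows essentially the same approach as the paper: the paper also writes $L_n = P_n^\loop \times K_2$ and immediately invokes \cref{obs:cartesian_2} and \cref{prop:looped_path}. Your version is simply more explicit about the isomorphism (the parity-dependent relabeling is exactly what is needed to turn the diagonal edges of the tensor product into the parallel rails of the ladder), which the paper leaves implicit.
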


\begin{proof}
We have $L_n = P_n^\loop \times K_2$, so the proposition follows from \cref{obs:cartesian_2} and \cref{prop:looped_path}. 
\end{proof}

\subsection{Loop threshold graphs} \label{sec:loop-thresh}

Here we define the family of loop threshold graphs and prove that they are Hoffman-London. Denote by $K_1$ the graph on one vertex with no edges, and by $K^{\loop}_1$ the graph on one vertex with a loop. The family of \emph{loop threshold graphs} is the minimal (by inclusion) family of graphs that includes $K_1$ and $K^{\loop}_1$, is closed under adding isolated vertices, and is closed under adding looped dominating vertices. Loop threshold graphs were introduced by Cutler and Radcliffe~\cite{CutlerRadcliffe2014} where they were shown to be a natural family to consider in the context of $H$-coloring; one of the results of \cite{CutlerRadcliffe2014} is that if $H$ is loop threshold, then among $n$-vertex $m$-edge graphs, there is a threshold graph that maximizes the number of $H$-colorings. For further work on loop threshold graphs, see also~\cite{CutlerKass2020} (regarding $H$-coloring) and~\cite{GalvinWesleyZacovic2022} (regarding enumeration). 

The following characterization of loop threshold graphs is stated in~\cite{CutlerRadcliffe2014}, where the authors suggest adapting a proof from~\cite{ChvatalHammer1977}. For completeness, we present the details here.

\begin{lemma} \label{lem:loop_thresh_char}
For a graph $H$ on $n$ vertices, possibly with loops, the following are equivalent.
\begin{enumerate}
\item $H$ is loop threshold.
\item There is an ordering of the vertices of $H$, say $v_1, v_2, \ldots, v_n$, with the property that
\[N(v_1) \subseteq N(v_2) \subseteq \cdots \subseteq N(v_n).\]
\end{enumerate}
\end{lemma}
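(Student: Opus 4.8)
The plan is to prove both implications by induction on the number of vertices $n$, in each case peeling off a single distinguished vertex. The two one-vertex graphs $K_1$ and $K_1^{\loop}$ serve as the common base case: a one-vertex graph is loop threshold by definition, and it trivially has a nested neighborhood ordering.

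For $(1) \Rightarrow (2)$, I would induct on the construction of the loop threshold graph $H$: write $H$ as obtained from a loop threshold graph $H'$ on one fewer vertex (which by induction admits a nested ordering $v_1, \ldots, v_{n-1}$) by one of the two allowed operations. If $H$ comes from adjoining an isolated vertex $w$, then $N_H(w) = \emptyset$ while the other neighborhoods are unchanged, so $w, v_1, \ldots, v_{n-1}$ is a nested ordering of $H$. If $H$ comes from adjoining a looped dominating vertex $w$, then $N_H(w) = V(H)$ contains every set in the chain, and $N_H(v_i) = N_{H'}(v_i) \cup \{w\}$; since adjoining the single common element $w$ to every set in an inclusion chain preserves all the inclusions, $v_1, \ldots, v_{n-1}, w$ is a nested ordering of $H$.

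For $(2) \Rightarrow (1)$, suppose $v_1, \ldots, v_n$ satisfies $N(v_1) \subseteq \cdots \subseteq N(v_n)$, and again induct on $n$ with $n = 1$ handled as above. For $n \ge 2$, split into two cases according to whether $N(v_1)$ is empty. If $N(v_1) = \emptyset$, then $v_1$ is isolated; since an isolated vertex appears in no other vertex's neighborhood, the chain $N(v_2) \subseteq \cdots \subseteq N(v_n)$ is a nested ordering of $H - v_1$, so $H - v_1$ is loop threshold by induction and $H$ is recovered by adjoining an isolated vertex. If $N(v_1) \ne \emptyset$, choose any $u \in N(v_1)$; then $u \in N(v_i)$ for every $i$, which means $u$ is adjacent to every vertex of $H$, and in particular (taking the index with $v_i = u$) that $u$ carries a loop, so $u$ is a looped dominating vertex. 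Deleting $u$ removes exactly the element $u$ from each remaining neighborhood, which preserves all the inclusions, so $H - u$ inherits a nested ordering and is loop threshold by induction; adjoining the looped dominating vertex $u$ back recovers $H$.

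Most of the work is routine set-inclusion bookkeeping. The one genuinely informative point — and the step I would be most careful about — is the observation in the last case that any element $u$ lying in the common sub-neighborhood $N(v_1)$ must be a looped dominating vertex, together with its dual: that deleting such a $u$ (or an isolated vertex) alters the remaining neighborhoods only by the uniform removal of that single vertex, so that the nestedness of the chain is preserved. Once these are pinned down, both inductions close immediately.
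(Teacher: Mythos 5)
Your proof is correct, and both directions follow the same inductive backbone as the paper: peel off either an isolated vertex or a looped dominating vertex, apply the inductive hypothesis, and reassemble. The $(1)\Rightarrow(2)$ direction is essentially identical to the paper's (the paper phrases it as closure of the family of graphs admitting nested orderings, you phrase it as induction on the construction, but these are the same argument). In $(2)\Rightarrow(1)$ your case split is the dual of the paper's: the paper examines the \emph{top} of the chain, asking whether $N(v_n)=V(H)$ (so $v_n$ is a looped dominating vertex to delete) or, failing that, locating some $v_i\notin N(v_n)$ and showing it is isolated; you examine the \emph{bottom}, asking whether $N(v_1)=\emptyset$ (so $v_1$ is isolated) or, failing that, taking any $u\in N(v_1)$, observing $u$ lies in every $N(v_i)$ including $N(u)$, and concluding $u$ is a looped dominating vertex. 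Your version is arguably a touch cleaner since the first case deletes $v_1$ directly rather than hunting for an unspecified $v_i$, but the two arguments are mirror images and buy nothing different in substance. The key sanity checks you flag --- that deleting a vertex that appears in no neighborhood, or a vertex that appears in every neighborhood, uniformly shrinks the chain and hence preserves nestedness --- are exactly the observations the paper also relies on, so the proof is complete.
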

We refer to an ordering of vertices satisfying the second condition in \cref{lem:loop_thresh_char} as a \emph{nested neighborhood ordering}.
 
\begin{proof}
We begin by showing that if $H$ is a loop threshold graph, then it admits a nested neighborhood ordering. Let ${\mathcal O}$ be the family of graphs for which a nested neighborhood ordering exists. Trivially, $K_1, K^{\loop}_1 \in {\mathcal O}$. Suppose now that $H \in {\mathcal O}$. Let $v_1, \ldots, v_n$ be a nested neighborhood ordering for $H$. If $H'$ is obtained from $H$ by adding an isolated vertex, $u$, then since $N_{H'}(u)=\varnothing$ we have that $u, v_1, \ldots, v_{n}$ is a nested neighborhood ordering for $H'$, so $H' \in {\mathcal O}$. If instead $H'$ is obtained from $H$ by adding a looped dominating vertex, $u$, then since $N_{H'}(u)=\{v_1, \ldots, v_{n}, u\}$, we have that $v_1, \ldots, v_{n}, u$ is a nested neighborhood ordering for $H'$, so $H' \in {\mathcal O}$. It follows from the definition of the family of loop threshold graphs that all loop threshold graphs are in ${\mathcal O}$.    

For the reverse implication, let $H$ be a graph on $n$ vertices that admits a nested neighborhood ordering $v_1, \ldots, v_n$. We show by induction on $n$ that $H$ is loop threshold. When $n=1$ this is clear, so we assume $n > 1$. 

If $N(v_n)=\{v_1, \ldots, v_n\}$, then $v_n$ is a looped dominating vertex in $H$. Let $H'$ be obtained from $H$ by deleting $v_n$. We have that
\[
N(v_1)\setminus \{v_n\} \subseteq N(v_2)\setminus \{v_n\} \subseteq \cdots N(v_{n-1})\setminus \{v_n\}
\]
and so $v_1, \ldots, v_{n-1}$ is a nested neighborhood ordering for $H'$. By induction, $H'$ is a loop threshold graph, so $H$, being obtained from $H'$ by adding a looped dominating vertex, is also loop threshold.

Otherwise there is $v_i$ in $H$ that is not in $N(v_n)$. Then $v_i$ must be isolated: if there were $v_j$ such that $v_i \sim v_j$, then $v_i \in N(v_j) \subseteq N(v_n)$. So we observe that for each $j$, $N(v_j) \setminus \{v_i\} = N(v_j)$, meaning if $H'$ is obtained from $H$ by deleting $v_i$, then $H'$ admits the nested neighborhood ordering $v_1, \ldots, v_{i-1}, v_{i+1}, \ldots, v_n$. By induction, $H'$ is a loop threshold graph, and as we obtain $H$ by adding $v_i$, an isolated vertex, $H$ is also loop threshold.
\end{proof}

Using \cref{lem:loop_thresh_char}, we now prove that loop threshold graphs are Hoffman-London, and we characterize the strongly Hoffman-London loop threshold graphs.

\begin{theorem}\label{thm:loop_threshold}
Each loop threshold graph is Hoffman-London. Furthermore, if $H$ is a loop threshold graph, then $H$ is strongly Hoffman-London if and only after removing all isolated vertices from $H$, the resulting graph is neither empty nor a fully looped complete graph.
\end{theorem}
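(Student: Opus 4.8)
The plan is to obtain the Hoffman-London claim from \cref{thm:ordering_framework} and the strongly-Hoffman-London characterization from \cref{cor:uniquenessalt} together with \cref{obs:isolated,obs:regular}. First I would identify the automorphic similarity classes of a loop threshold graph $H$. By \cref{lem:loop_thresh_char} the neighborhoods of $H$ form a chain under inclusion, so any two neighborhoods are comparable; consequently two vertices of $H$ have equal degree if and only if they have equal neighborhoods, and equal neighborhoods makes the transposition of the two vertices an automorphism. Hence the automorphic similarity classes of $H$ are precisely the classes of vertices with a common neighborhood, and ordering them $H^1,\dots,H^k$ by increasing neighborhood (a total order, with strictly increasing common degrees) makes each common neighborhood $N^{(i)}$ a union of whole classes with $N^{(1)}\subsetneq\cdots\subsetneq N^{(k)}$. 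The automorphic similarity matrix then has $m_{i,j}=|H^j|$ when $H^j\subseteq N^{(i)}$ and $m_{i,j}=0$ otherwise, so $N^{(i)}\subseteq N^{(i+1)}$ forces $m_{i,j}\le m_{i+1,j}$ for every $j$; in particular $M$ has the increasing columns property and \cref{thm:ordering_framework} gives that $H$ is Hoffman-London.

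For the characterization of strongly Hoffman-London loop threshold graphs I would first reduce to the case that $H$ has no isolated vertices: the defining inequality is vacuous for $n\le 3$, and for $n\ge 4$ trees have no isolated vertices, so by \cref{obs:isolated} $H$ is strongly Hoffman-London if and only if the graph $H'$ obtained by deleting the isolated vertices of $H$ is. If $H'$ is edgeless then $\hom(T_n,H)=0$ for every tree on at least two vertices, and if $H'$ is a fully looped complete graph then it is regular, so by \cref{obs:regular,obs:isolated} all trees on at least two vertices get the same number of $H$-colorings; in either case $H$ is not strongly Hoffman-London. It remains to show that a loop threshold $H$ with no isolated vertices that is neither edgeless nor a fully looped complete graph is strongly Hoffman-London.

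To do this I would apply \cref{cor:uniquenessalt}. A look at the recursive construction shows such an $H$ is connected: the final construction step cannot be ``add an isolated vertex'' (that vertex would remain isolated), so it is ``add a looped dominating vertex $u$'', which both forces connectivity and places a looped dominating vertex $u$ in the top class $H^k$; moreover $k\ge 2$, since $k=1$ would mean all vertices share a neighborhood, forcing $H$ edgeless or fully looped complete. For each $t\ge 2$, coloring $P_t$ as $u,u,\dots,u,x$ with $x$ a representative of $H^1$ is a valid $H$-coloring (using the loop at $u$ and the fact that $u$ dominates) whose endvertices lie in $H^k$ and $H^1$, so I take $a(t)=1$ and $b(t)=k$. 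The hypothesis of \cref{cor:uniquenessalt} on colorings of $P_t$ is then met, and the remaining hypothesis, $\homclass{v}{1}{P_s}{H}<\homclass{v}{k}{P_s}{H}$ for all $s\ge 2$, follows once I show $\homclass{v}{1}{P_s}{H}<\homclass{v}{2}{P_s}{H}$, because the proof of \cref{thm:ordering_framework} already yields $\homclass{v}{2}{P_s}{H}\le\homclass{v}{k}{P_s}{H}$. For the strict inequality I would use the injection $\Homclass{v}{1}{P_s}{H}\to\Homclass{v}{2}{P_s}{H}$ that changes the image of the endvertex $v$ from a representative $x$ of $H^1$ to a representative $x'$ of $H^2$, which is well defined because $N^{(1)}\subseteq N^{(2)}$, and which is not surjective because a coloring sending $v$ to $x'$ and its neighbor to a vertex of $N^{(2)}\setminus N^{(1)}$ (then continued along the path, using that no vertex of $H$ is isolated) avoids its image; \cref{cor:uniquenessalt} then gives that $H$ is strongly Hoffman-London. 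The one step that needs real care is the first one, namely verifying that the nested-neighborhood structure genuinely pins down the orbit partition and translates into entrywise monotonicity of the columns of $M$; everything afterward is routine assembly of the cited observations, the connectivity remark, and the injection.
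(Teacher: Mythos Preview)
Your proposal is correct and follows essentially the same route as the paper: verify the increasing-columns property via the nested-neighborhood characterization to invoke \cref{thm:ordering_framework}, then feed a suitable pair of classes into \cref{cor:uniquenessalt} using an injection that changes the color at the path endpoint. The only differences are cosmetic (you pick $a(t)=1$, $b(t)=k$ and exploit a looped dominating vertex, whereas the paper picks an unlooped vertex and a looped neighbor of it), and you should add one line noting that deleting isolated vertices from a loop threshold graph leaves a loop threshold graph, since your reduction to the isolate-free case tacitly uses this.
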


Note that we consider a vertex $v$ isolated if $N(v) \subseteq \{v\}$. In other words, a single vertex with a loop is still an isolated vertex.

\begin{proof} 
Let $H$ be a loop threshold graph. To establish the Hoffman-London property, we seek to apply \cref{thm:ordering_framework}. By \cref{lem:loop_thresh_char}, there is an ordering of the vertices of $H$, say $v_1, v_2, \ldots, v_n$, with the property that
\[N(v_1) \subseteq N(v_2) \subseteq \cdots \subseteq N(v_n).\]
Suppose $v_i \in H^a$ and $v_{i+1} \in H^b$ for some $a \ne b$. We claim $v_j \notin H^a$ for all $j > i$. Suppose otherwise: assume $v_i \in H^a$, $v_{i+1} \in H^b$, and $v_j \in H^a$ for some $j > i$. As $v_a$ and $v_j$ both belong to $H^a$, $|N(v_i)| = |N(v_j)|$, and as $i < j$, $N(v_i) \subseteq N(v_j)$, so $N(v_i) = N(v_j)$. Then
\[ N(v_j) = N(v_i) \subseteq N(v_{i+1}) \subseteq N(v_j) \]
so $N(v_{i+1}) = N(v_i)$ as well. Thus the function on $V(H)$ that swaps $v_i$ and $v_{i+1}$ and fixes all other vertices is an automorphism of $H$, contradicting that $v_{i+1} \notin H^a$. We conclude that this ordering of the vertices induces an ordering of the automorphic similarity equivalence classes of $H$, say $H^1, \ldots, H^k$, such that if $u \in H^i$ and $v \in H^j$ for $i < j$ then $N(u) \subseteq N(v)$. If we use this ordering of the automorphic similarity classes to define the automorphic similarity matrix $M$, we see that each column of $M$ is non-decreasing: for any $1 \le j \le k$ and $1 \le i \le k-1$, $m_{i,j} \le m_{i+1,j}$ because for any $u \in H^i$ and $v \in H^{i+1}$, $N(u) \subseteq N(v)$, so each $w \in H^j$ is adjacent to $u$ only if it is adjacent to $v$. Sums of non-decreasing vectors are also non-decreasing, so this ordering of the automorphic similarity classes of $H$ satisfies the hypothesis of \cref{thm:ordering_framework}. We conclude that $H$ is Hoffman-London, as claimed.

We now turn to the characterization of loop threshold graphs that are strongly Hoffman-London. For $n \ge 2$, each vertex of any tree on $n$ vertices is incident to an edge and cannot be mapped to an isolated vertex of $H$. (This is essentially the content of \cref{obs:isolated}). Thus if $H$ is loop threshold and $H'$ is the graph resulting from removing the isolated vertices from $H$, we have $\hom(T_n,H) = \hom(T_n,H')$ for all $n \ge 2$. If $H'$ is empty, then $\hom(T_n,H') = 0$ for all $T_n \in {\mathcal T}_n$. If $H'$ is a fully looped complete graph, it is regular, so each $T_n \in {\mathcal T}_n$ admits the same number of $H$-colorings by \cref{obs:regular}.

Now assume that upon deletion of all isolated vertices, $H$ is non-empty and not a fully looped complete graph. Note that the resulting graph is also loop threshold: because only isolated vertices were removed, the neighborhoods of the remaining vertices are unchanged, so the resulting graph still admits a nested neighborhood ordering. Thus it suffices to assume $H$ simply has no isolated vertices and is not a fully looped complete graph. To show that $H$ is strongly Hoffman-London, we use \cref{cor:uniquenessalt}.

We begin by showing that $H$ has at least two automorphic similarity classes. If every vertex of $H$ is looped, then $H$ must have been constructed by adding looped dominating vertices at every step, contradicting that $H$ is not a fully looped complete graph. Let $u$ be a vertex of $H$ without a loop. Because $u$ was not removed, it is not isolated, so there is $v \sim u$. We claim that $u$ comes before $v$ in the nested neighborhood ordering of $H$; if not, then $u \in N(v) \subseteq N(u)$, which contradicts our claim that $u$ does not have a loop. By similar reasoning, $v$ must have a loop: $v \in N(u) \subseteq N(v)$. Because $u$ does not have a loop but $v$ does, no automorphism of $H$ can send $u$ to $v$. We conclude $H$ has at least two distinct automorphic similarity classes.

Now for each $t \ge 2$, let $a(t)=i$, where $H^i$ is the class containing $u$, and let $b(t)=j$, where $H^j$ is the class containing $v$. We first observe that there is an $H$-coloring of $P_t$ that sends the first vertex of the path to some vertex in $H^{a(t)}$ and the last vertex of the path to some vertex in $H^{b(t)}$. Indeed, mapping the first vertex of $P_t$ to $u$ and all subsequent vertices to $v$ yields such a coloring.

To apply \cref{cor:uniquenessalt} to conclude that $H$ is strongly Hoffman-London, it remains to show that for each $t \ge 2$ and $s \ge 2$, $\homclass{w}{b(t)}{P_s}{H} > \homclass{w}{a(t)}{P_s}{H}$ where $w$ is one of the endvertices of $P_s$. In what follows, when enumerating $\homclass{w}{a(t)}{P_s}{H}$ (respectively, $\homclass{w}{b(t)}{P_s}{H}$) we will take $u$ (respectively, $v$) to be the specific vertex of $H^{a(t)}$ (respectively, $H^{b(t)}$) to which $w$ is mapped in an $H$-coloring. There is a simple injection from $\Homclass{w}{a(t)}{P_s}{H}$ into $\Homclass{w}{b(t)}{P_s}{H}$: change the color at $w$ from $u$ to $v$. That this is a valid map comes from the fact that $N(u) \subseteq N(v)$. To see that this injection is not a bijection, and so $\homclass{w}{b(t)}{P_s}{H} > \homclass{w}{a(t)}{P_s}{H}$, consider the $H$-coloring of $P_s$ that sends $w$ to $v$, that sends the unique neighbor of $w$ in $P_s$ to $u$, and that sends all other vertices of $P_s$ (if there are any) to $v$. This coloring is in $\Homclass{w}{b(t)}{P_s}{H}$, but, since there is not a loop at $u$ in $H$, it is not the image of any coloring in $\Homclass{w}{a(t)}{P_s}{H}$.
\end{proof}

We conclude our discussion of loop threshold graphs with an application of \cref{thm:loop_threshold} to a family of target graphs that have arisen both in statistical physics and communications networks. For integer $C \ge 1$, define a \emph{capacity $C$ independent set} in a graph $G$ to be a function $f:V(G) \to \{0,1,2,\ldots, C\}$ satisfying $f(x)+f(y)\le C$ for all $xy \in E(G)$. Standard independent sets are capacity 1 independent sets. This notion was introduced by Mazel and Suhov~\cite{MazelSuhov1991} as a statistical physics model of a random surface and was subsequently studied in~\cite{GalvinMartinelliRamananTetali2011,RamananSenguptaZiedinsMitra2002} in the context of multicast communications networks. Capacity $C$ independent sets can be encoded as $H$-colorings by taking $H=H_C$ to be the graph on vertex set $\{0,1,\ldots, C\}$ with edge set $\{\{a,b\}: a+b \le C\}$. The graph $H_C$ admits a nested neighborhood ordering, namely $C, C-1, \ldots, 1, 0$, since $N_{H_C}(i)=\{0,1,\ldots, C-i\}$, so by \cref{lem:loop_thresh_char} $H_C$ is loop threshold, and the following result is a corollary of \cref{thm:loop_threshold}.

\begin{corollary}
For $C \ge 1$ and $n \ge 1$, if $T_n$ is a tree on $n$ vertices that is different from $P_n$, then $P_n$ admits strictly fewer capacity $C$ independent sets than $T_n$.   
\end{corollary}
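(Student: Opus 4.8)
The plan is to observe that a capacity $C$ independent set of a graph $G$ is precisely an $H_C$-coloring of $G$, so that the number of capacity $C$ independent sets of $G$ equals $\hom(G,H_C)$, and then to invoke the strongly Hoffman-London half of \cref{thm:loop_threshold}. The discussion immediately preceding this corollary already records that $H_C$ is a loop threshold graph, via the nested neighborhood ordering $C, C-1, \ldots, 1, 0$ together with \cref{lem:loop_thresh_char}. Consequently the only thing that remains is to check that $H_C$ meets the hypotheses under which \cref{thm:loop_threshold} upgrades ``Hoffman-London'' to ``strongly Hoffman-London'': that after deleting all isolated vertices, $H_C$ is neither empty nor a fully looped complete graph.

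First I would verify that $H_C$ has no isolated vertices whatsoever, so that this last bookkeeping step is trivial. Using $N_{H_C}(i) = \{0, 1, \ldots, C-i\}$ and $C \ge 1$, the vertex $0$ has $1 \in N_{H_C}(0)$, while every vertex $i \ge 1$ has $0 \in N_{H_C}(i)$; in either case $N_{H_C}(i) \not\subseteq \{i\}$, so no vertex is isolated in the sense used here. Next, $H_C$ is not a fully looped complete graph: a vertex $i$ carries a loop exactly when $2i \le C$, which fails at $i = C$ since $2C > C$, so $C$ is an unlooped vertex (and, for good measure, $1$ and $C$ are non-adjacent since $1 + C > C$). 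Hence \cref{thm:loop_threshold} applies in its strong form, yielding $\hom(P_n, H_C) < \hom(T_n, H_C)$ for every $n \ge 1$ and every tree $T_n \ne P_n$ on $n$ vertices, which is exactly the asserted strict inequality between the number of capacity $C$ independent sets of $P_n$ and of $T_n$.

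I do not expect any genuine obstacle; the single point needing care is the treatment of isolated vertices built into the statement of \cref{thm:loop_threshold}, and we have just seen that $H_C$ has none. As a sanity check one could also argue directly from \cref{cor:uniquenessalt}, taking $a(t)$ to be the automorphic similarity class of the unlooped vertex $C$ and $b(t)$ that of the looped vertex $0$ (adjacent since $0 + C = C \le C$), exhibiting for each $t$ the $P_t$-coloring that sends one endvertex to $C$ and all remaining vertices to $0$, and using the injection from $\Homclass{w}{a(t)}{P_s}{H_C}$ into $\Homclass{w}{b(t)}{P_s}{H_C}$ that recolors the distinguished endvertex $w$ from $C$ to $0$ — valid because $N_{H_C}(C) = \{0\} \subseteq N_{H_C}(0)$, and not surjective because the coloring sending $w$ to $0$, the neighbor of $w$ to $C$, and all other vertices to $0$ is not hit (there is no loop at $C$). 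Routing through \cref{thm:loop_threshold} is the shorter path, so that is the route I would present.
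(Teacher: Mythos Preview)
Your proposal is correct and follows the same route as the paper: recognize that $H_C$ is loop threshold (already established just before the corollary) and invoke \cref{thm:loop_threshold} in its strong form. The paper simply declares the result ``a corollary of \cref{thm:loop_threshold}'' without spelling out the hypothesis check, whereas you explicitly verify that $H_C$ has no isolated vertices and is not a fully looped complete graph; this is exactly the right verification, and your alternative sketch via \cref{cor:uniquenessalt} is a faithful unpacking of what the proof of \cref{thm:loop_threshold} does in this instance.
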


\subsection{Adding looped dominating vertices to a target graph}

In this section we establish that the set of graphs $H$ for which \cref{thm:ordering_framework} is applicable is closed under adding looped dominating vertices and present an application that will be useful later.

\begin{proposition} \label{prop:closure}
Suppose that $H$ is a target graph that has an automorphic similarity matrix satisfying the increasing columns property. Let $\widetilde{H}$ be obtained from $H$ by adding some number of looped dominating vertices. Then $\widetilde{H}$ is Hoffman-London.
\end{proposition}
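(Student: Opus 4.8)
The plan is to show that $\widetilde H$ inherits the hypothesis of \cref{thm:ordering_framework}: I will exhibit an ordering of the automorphic similarity classes of $\widetilde H$ whose automorphic similarity matrix $\widetilde M$ has the increasing columns property, and then invoke \cref{thm:ordering_framework} directly. It suffices to treat all of the newly adjoined vertices at once, so let $D$ denote the set of added looped dominating vertices (with $|D|\ge 1$; the case $|D|=0$ is vacuous).

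First I would pin down the automorphic similarity classes of $\widetilde H$ in terms of those of $H$, using two structural facts. First, the set of looped dominating vertices of any graph is invariant under automorphisms, and any transposition of two looped dominating vertices is itself an automorphism; hence in any graph the looped dominating vertices, when they exist, form a single automorphic similarity class. Second, a vertex has degree exactly $|V(H)|$ precisely when it is looped and dominating, and the increasing columns property (at $c=1$) forces the sequence of common degrees of $H^1,\dots,H^k$ to be nondecreasing; therefore the set $D'_H$ of looped dominating vertices of $H$ is either empty or is exactly the last class $H^k$. Writing $D'=D\cup D'_H$ for the looped dominating vertices of $\widetilde H$, I claim the automorphic similarity classes of $\widetilde H$ are $H^1,\dots,H^r$ (those classes of $H$ disjoint from $D'_H$, so $r=k$ or $r=k-1$) together with the single class $D'$. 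The implication ``$H$-similar $\Rightarrow$ $\widetilde H$-similar'' on $V(H)\setminus D'_H$ follows by extending an automorphism of $H$ to fix $D$ pointwise. For the converse, given an automorphism $\psi$ of $\widetilde H$ witnessing $\widetilde H$-similarity of $u,v\notin D'$, one post-composes $\psi$ with a suitable permutation of the class $D'$ (itself an automorphism of $\widetilde H$) so that the composite maps $V(H)$ onto $V(H)$; its restriction is then an automorphism of $H$ carrying $u$ to $v$.

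With the classes identified, the automorphic similarity matrix of $\widetilde H$ relative to the ordering $H^1,\dots,H^r,D'$ is
\[
\widetilde M=\begin{bmatrix} M'' & D_0\,\mathbf 1\\[3pt] \mathbf a'' & D_0\end{bmatrix},
\qquad D_0=|D|+|D'_H|,
\]
where $M''$ is the top-left $r\times r$ block of $M$ (all of $M$ when $r=k$), $\mathbf 1$ is the all-ones column of length $r$, and $\mathbf a''=(|H^1|,\dots,|H^r|)$: every vertex of an $H^i$ with $i\le r$ is adjacent to all $|D|$ new vertices and all $|D'_H|$ old dominating vertices, while every vertex of $D'$ is adjacent to everything. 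I would then check the increasing columns property of $\widetilde M$ by comparing consecutive rows. Comparing two rows $i,i+1\le r$ over a terminal segment of columns, the appended entry $D_0$ cancels; what remains is exactly the increasing columns property of $M$ when $r=k$, and when $r=k-1$ it is that property for $M$ after cancelling the last column of $M$, which is constant ($m_{i,k}=m_{i+1,k}=|H^k|$ since $D'_H=H^k$ is dominating). Comparing row $r$ with the bottom row, after the common $D_0$ cancels one needs $\sum_{j=c}^r m_{r,j}\le\sum_{j=c}^r|H^j|$, which is immediate from $m_{r,j}\le|H^j|$; and the terminal sum of just the final column reads $D_0\le D_0$. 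Hence $\widetilde M$ has the increasing columns property, and \cref{thm:ordering_framework} yields that $\widetilde H$ is Hoffman-London.

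The main obstacle I anticipate is the bookkeeping in the second paragraph: proving that $D'_H$ is forced to be the last class of $H$ in the given ordering (via nondecreasing degrees), and that no automorphism of $\widetilde H$ can ``leak'' a vertex of $V(H)$ into $D$, which is what necessitates the post-composition adjustment. Degenerate cases are handled at once --- if $r=0$ then $H$, and hence $\widetilde H$, is a fully looped complete graph, so $\widetilde H$ is Hoffman-London by \cref{obs:regular}.
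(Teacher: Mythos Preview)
Your proposal is correct and follows essentially the same approach as the paper: exhibit an ordering of the automorphic similarity classes of $\widetilde{H}$ so that $\widetilde{M}$ has the increasing columns property, then invoke \cref{thm:ordering_framework}. The paper separates into two cases (according to whether $H$ already has looped dominating vertices) while you unify them via the parameter $r\in\{k-1,k\}$, and you are more explicit than the paper about why the non-dominating classes of $H$ persist in $\widetilde{H}$ (the post-composition argument); otherwise the arguments match.
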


\begin{proof}
We begin by considering the case where $H$ does not have any looped dominating vertices. Let the automorphic similarity classes of $H$ be $H^1, \ldots, H^k$, presented in such an order that the associated automorphic similarity matrix $M$ satisfies the increasing columns property. Then let $\widetilde{H}$ be obtained from $H$ by adding $b$ looped dominating vertices.

Any automorphism of $\widetilde{H}$ must be an extension of an automorphism of $H$ obtained by permuting the looped dominating vertices of $\widetilde{H}$, and so $\widetilde{H}$ has automorphic similarity classes $\widetilde{H}^1, \ldots, \widetilde{H}^k, \widetilde{H}^{k+1}$ where $\widetilde{H}^i = H^i$ for $i \le k$ and $\widetilde{H}^{k+1}$ is the set of $b$ added looped dominating vertices. The associated automorphic similarity matrix $\widetilde{M}$ is obtained from $M$ by adding a new row at the bottom whose \ith{j} entry ($j=1, \ldots, k$) is $|H^j|$, and then adding a new column at the far left, all of whose entries are $b$. The last column of this matrix is constant and so non-decreasing. 

For $i$ and $j$ with $1 \le i \le k-1$ and $1 \le j \le k$, the sum $\widetilde{S}^{i,j}$ of the entries of row $i$ of $\widetilde{M}$ starting from the \ith{j} entry is $b$ greater than $S^{i,j}$, the sum of the entries of row $i$ of $M$ starting from the \ith{j} entry, and similarly $\widetilde{S}^{i+1,j} = S^{i+1,j}+b$, so $\widetilde{S}^{i,j} \le \widetilde{S}^{i+1,j}$ follows from $S^{i,j} \le S^{i+1,j}$. 

Finally, the $(k,j)$-entry ($1 \le j \le k$) of $\widetilde{M}$, which counts the number of neighbors an element of $H^k$ has in $H^j$, is trivially bounded above by $|H^j|$, which is the $(k+1,j)$-entry of $\widetilde{M}$; this, together with the fact that the \ith{(k,k+1)} and \ith{(k+1,k+1)} entries of $\widetilde{M}$ agree, shows that $\widetilde{S}^{k,j} \le \widetilde{S}^{k+1,j}$. We conclude that $\widetilde{M}$ has the increasing columns property and that $\widetilde{H}$ is Hoffman-London.

We now consider the case where $H$ has some looped dominating vertices. The collection of looped dominating vertices in $H$ forms an autmorphic similarity class. Moreover if the automorphic similarity classes of $H$ are $H^1, \ldots, H^k$, presented in such an order that the associated automorphic similarity matrix $M$ satisfies the increasing columns property, then the class $H^k$ must be the class consisting of all the looped dominating vertices. To see this, note that the increasing columns property applied to the sum of all $k$ columns of $M$ simply says $d_1 \le d_2 \le \cdots \le d_k$, where $d_i$ is the common degree of the vertices in $H^i$. Looped dominating vertices have the maximum possible degree, namely $|H|$, and any vertex with degree $|H|$ is a looped dominating vertices. It follows that $d_k=|H|$ and that $H^k$ consists of looped dominating vertices.

Now once again let $\widetilde{H}$ be obtained from $H$ by adding $b$ looped dominating vertices. The automorphic similarity classes of $\widetilde{H}$ are $\widetilde{H}^1, \widetilde{H}^2, \ldots, \widetilde{H}^k$, where $\widetilde{H}^i=H^i$ for $i < k$ and $\widetilde{H}^k$ is the union of $H^k$ and the $b$ added looped dominating vertices. The associated automorphic similarity matrix $\widetilde{M}$ is obtained from $M$ by adding $b$ to every entry in the final column of $M$. In a similar manner to the previous case, it follows that $\widetilde{M}$ has the increasing columns property and that $\widetilde{H}$ is Hoffman-London.
\end{proof}

Before presenting an application of \cref{prop:closure}, we discuss blow-ups of graphs. 
\begin{definition}
Let $H$ be a target graph on vertex set $\{v_0, v_1, \ldots, v_k\}$ without multiple edges but perhaps with loops. Let ${\bf n}=(n_0, n_1, \ldots, n_k)$ be a vector of positive integers. The {\it blow-up of $H$ by ${\bf n}$}, denoted $H^{\bf n}$, has vertex set $\bigcup_{i=0}^k V_i$, where $V_i$ is a set of size $n_i$ and the $V_i$s are pairwise disjoint. If $v_i \sim_H v_j$ (with either $i \ne j$ or $i=j$), then there is an edge in $H^{\bf n}$ between every $x \in V_i$ and $y \in V_j$, and there are no other edges. 
\end{definition}
\noindent In other words, $H^{\bf n}$ is obtained from $H$ by ``blowing up'' each vertex of $H$ to a cluster of vertices, replacing unlooped vertices with empty graphs, looped vertices with fully looped complete graphs, and edges with complete bipartite graphs. As an example, the join of an empty graph and a fully looped complete graph is a blow-up of $H_{\rm ind}$, the target graph that encodes independent sets; see \cref{fig:blowup}.

\begin{figure}[ht]
    \centering
    \captionsetup{width=.7\linewidth}
    \begin{tikzpicture}
        \coordinate (v) at (0,0);
        \coordinate (w) at (2,0);

        \draw[fill=black] (v) circle (3pt);
        \draw[fill=black] (w) circle (3pt);

        \draw[thick,black] (v) -- (w);
        \draw[thick,black] (w) to [out=-50,in=50,loop,style={min distance=10mm}] (w);

        \node at (3,0) {};

        \node[label=below:{$H$}] at (1,-1.08) {};
    \end{tikzpicture}
    \begin{tikzpicture}
        \node at (-1.5,0) {};
        \coordinate (v1) at (-.5,0);
        \coordinate (v2) at (0.25,.43);
        \coordinate (v3) at (0.25,-.43);

        \coordinate(w1) at (2,0.5);
        \coordinate(w2) at (2,-0.5);

        \draw[fill=black] (v1) circle (3pt);
        \draw[fill=black] (v2) circle (3pt);
        \draw[fill=black] (v3) circle (3pt);
        \draw[fill=black] (w1) circle (3pt);
        \draw[fill=black] (w2) circle (3pt);

        \draw[thick,black] (w1) to [out=-5,in=95,loop,style={min distance=10mm}] (w1);
        \draw[thick,black] (w2) to [out=-5,in=-95,loop,style={min distance=10mm}] (w2);
        \draw[thick,black] (w1) -- (w2) -- (v1) -- (w1) -- (v2) -- (w2) -- (v3) -- (w1);

        \node[label=below:{$H^{\bf n}$}] at (1,-1) {};
    \end{tikzpicture}
    \caption{An example of a blow-up with ${\bf n} = (3,2)$ where the unlooped vertex of $H = H_{\ind}$ is labeled $v_0$ and the looped vertex is labeled $v_1$.} \label{fig:blowup}
\end{figure}
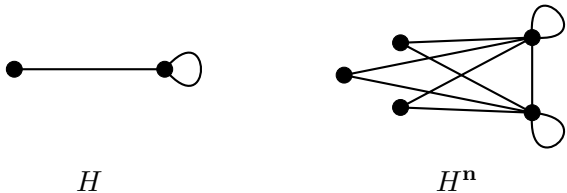

We now show, as a corollary of \cref{prop:closure}, that blow-ups of fully looped stars (stars augmented by putting a loop at every vertex) are Hoffman-London. We will present an application of this corollary, to the Widom-Rowlinson model from statistical physics, in \cref{sec:stat-phys}.

\begin{corollary} \label{cor:blowup-of-fully-looped-star}
Let $S_{t+1}^\circ$ be the fully looped star on $t+1$ vertices $v_0, v_1, \ldots, v_t$, with $v_0$ the center of the star. Let ${\bf n}=(n_0,n_1, \ldots, n_t)$ be any vector of positive integers. The blow-up of $S_{t+1}^\circ$ by ${\bf n}$ is Hoffman-London.
\end{corollary}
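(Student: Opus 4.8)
The plan is to exhibit the blow-up as a graph to which Proposition~\ref{prop:closure} applies. Write $B = (S_{t+1}^\circ)^{\bf n}$ and let $V_0, V_1, \ldots, V_t$ be the clusters of sizes $n_0, n_1, \ldots, n_t$ into which $v_0, v_1, \ldots, v_t$ are blown up. Since $v_0$ carries a loop and $v_0 \sim v_i$ in $S_{t+1}^\circ$ for every $1 \le i \le t$, every vertex of $V_0$ is adjacent in $B$ to every vertex of $V_0 \cup V_1 \cup \cdots \cup V_t$, itself included; that is, each of the $n_0$ vertices of $V_0$ is a looped dominating vertex of $B$. Deleting $V_0$ leaves the graph $H' := B - V_0$, and because $v_i$ is looped with $v_i \not\sim v_j$ for distinct $i,j \in \{1, \ldots, t\}$, this graph is the disjoint union $K_{n_1}^\circ \sqcup \cdots \sqcup K_{n_t}^\circ$ of fully looped complete graphs. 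Thus $B$ is obtained from $H'$ by adding $n_0 \ge 1$ looped dominating vertices, so by Proposition~\ref{prop:closure} it suffices to check that $H'$ has an automorphic similarity matrix with the increasing columns property.

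To do this I would first describe the automorphic similarity classes of $H'$. An automorphism of a disjoint union of connected graphs permutes components among isomorphic ones and restricts to an isomorphism on each; since $K_a^\circ \cong K_b^\circ$ iff $a = b$ and $\mathrm{Aut}(K_a^\circ)$ is the full symmetric group, two vertices of $H'$ are automorphically similar exactly when they lie in (possibly equal) components of the same size. Writing the distinct values among $n_1, \ldots, n_t$ as $d_1 < d_2 < \cdots < d_r$, the automorphic similarity classes are $C_1, \ldots, C_r$, where $C_s$ is the union of those clusters $V_i$ with $n_i = d_s$. A vertex of $C_s$ lies in a $K_{d_s}^\circ$ component whose vertex set is exactly its neighborhood, so it has $d_s$ neighbors in $C_s$ and none in any other class; hence $M(H') = \diag(d_1, \ldots, d_r)$ for this ordering of the classes.

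It remains to observe that a diagonal matrix with $d_1 < \cdots < d_r$ has the increasing columns property: for $1 \le c \le r$ and $1 \le s \le r-1$ the terminal sum $\sum_{j=c}^r m_{s,j}$ equals $d_s$ when $s \ge c$ and $0$ otherwise, and likewise for row $s+1$, so the row-$s$ terminal sum never exceeds the row-$(s+1)$ one (using $d_s < d_{s+1}$ in the first case and $0 \le d_{s+1}$, or $0 \le 0$, in the others). Applying Proposition~\ref{prop:closure} then gives that $B$ is Hoffman-London. I do not anticipate a real obstacle; the only points needing attention are that it is genuinely the \emph{fully looped} star being blown up (which is exactly what makes $V_0$ a set of looped dominating vertices rather than merely a clique of ordinary vertices) and the description of the automorphism group of $H'$, which is what prevents clusters of different sizes from being merged into a single automorphic similarity class.
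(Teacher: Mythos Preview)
Your argument is correct and follows the same route as the paper: strip off $V_0$ as a set of looped dominating vertices, check that the remaining disjoint union of fully looped cliques has a diagonal automorphic similarity matrix with the increasing columns property, and apply Proposition~\ref{prop:closure}. In fact your diagonal entries $d_s$ (the common component size within each class) are the correct ones---the paper records the total class sizes $s_i$ on the diagonal, a slip that does not affect the overall argument once one orders the classes by $d_s$ as you do.
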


\begin{proof}
We start by considering the graph $H(n_1, \ldots, n_t)$, which is the disjoint union of $t$ fully looped complete graphs $K^\loop_1, \ldots, K^\loop_t$ on $n_1, \ldots, n_t$ vertices, respectively. Let $S_1, \ldots, S_k$ be the partition of $\{1, \ldots, t\}$ induced by the equivalence relation $i \equiv j$ if and only if $n_i=n_j$, and let $s_i=\sum_{j \in S_i} n_j$. Without loss of generality, we may assume that $s_1 \le s_2 \le \cdots \le s_k$. For example, if $(n_1, n_2, n_3, n_4, n_5, n_6)=(3,10,4,3,10,10)$, then we would take $S_1=\{3\}$ with $s_1=4$, $S_2=\{1,4\}$ with $s_2=3+3=6$ and $S_3=\{2,5,6\}$ with $s_3=10+10+10=30$. The automorphic similarity classes of $H$ are $H^i=\bigcup_{j \in S_i} V(K^j)$ and the corresponding automorphic similarity matrix is $\diag\{s_1, \ldots, s_k\}$. By our choice of ordering on the $s_i$, this satisfies the increasing columns property. 

It follows from \cref{thm:ordering_framework} that $H(n_1, \ldots, n_t)$ is Hoffman-London, and therefore it follows from \cref{prop:closure} that the graph $H(n_0, n_1, \ldots, n_t)$ obtained from $H(n_1, \ldots, n_t)$ by adding $n_0$ looped dominating vertices is also Hoffman-London. But $H(n_0,n_1, \ldots, n_t)$ is exactly the blow-up of $S_{t+1}^\circ$ by ${\bf n}$, and this observation completes the proof. 
\end{proof}

\subsection{Connection to statistical physics} \label{sec:stat-phys}

In this section we briefly discuss a connection between graph homomorphisms and statistical physics models, and we use some of our results to derive consequences for the partition functions of some of these models. For more thorough (though still accessible) treatments of the connection, see e.g.~\cite{BrightwellWinkler1999, BrightwellWinkler2002, delaHarpeJones1993}.

One goal of statistical physics is to understand the global behavior of models that are defined by local rules. In a {\it hard constraint spin model}, space is modeled by a graph $G$. The vertices of $G$ represent sites that are occupied by at most one particle per site. Each particle has a certain spin, from among some specified collection of possible spins. The edges of $G$ encode pairs of sites that are close enough that the particles at those sites interact. The interaction rule in a hard constraint model is that only certain pairs of spins are allowed to appear on adjacent sites. This can be encoded by a graph $H$ whose vertices are the possible spins, with an edge between two vertices exactly when the corresponding spins are allowed to appear on adjacent sites. A valid configuration of spins on the vertices of $G$ is thus exactly an $H$-coloring of $G$. Note that in this context it is very natural to allow $H$ to have loops---a loop at vertex $v \in V(H)$ encodes the fact that two particles on adjacent sites of $G$ are allowed to both have spin $v$.

Typically, a hard-constraint spin model, with constraint graph $H$ on vertex set $\{v_0, \ldots, v_k\}$, comes with a vector of positive activities ${\boldsymbol\lambda}=(\lambda_0, \lambda_1, \ldots, \lambda_k)$ which (informally) measure how frequently particles of each spin typically occur. Here $\lambda_i$ is the activity associated with $v_i$. The weight of a configuration, or equivalently the weight of an $H$-coloring $f:V(G)\rightarrow V(H)$, is given by 
\begin{equation} \label{eq:config-weight}
w^{(H,{\boldsymbol\lambda})}(f) = \prod_{v \in V(G)} \lambda_{f(v)} = \prod_{i=0}^k \lambda_i^{|f^{-1}(v_i)|}
\end{equation}
where note that $f^{-1}(v_i)$ is the set of vertices in $G$ that are mapped to $v_i$ by $f$. The {\it partition function} of the model is the sum 
\[
Z^{\boldsymbol\lambda}(G,H) = \sum_{f \in \
Hom(G,H)} w^{(H,{\boldsymbol\lambda})}(f).
\]
The partition function is the normalizing constant used to turn the assignment of weights into an assignment of probabilities, making the model stochastic. The partition function encodes significant information about the model.

We present three specific examples; these are the three for which we later present results.
\begin{example} \label{ex:hc-model}
The {\it hard-core} or {\it lattice gas model} is a model of the occupation of space by atoms that have non-zero volume, with a ``hard core'' around their boundary, meaning that if an atom sits at a site, no other atom can sit at any nearby site. Dating back at least to the investigations of Gaunt and Fisher and of Runnels \cite{GauntFisher1965, Runnels1965}, this model can be represented using two spins, $v_{\rm out}$ representing ``unoccupied'' and $v_{\rm in}$ representing ``occupied,'' with the occupation rule being that unoccupied sites can be adjacent to each other, unoccupied sites can be adjacent to occupied sites, and occupied sites cannot be adjacent to each other. Since possible collections of occupied sites in this model correspond exactly with the collection of independent sets in $G$, the hard-core model can be encoded using the target graph $H_{\rm ind}$, where $v_{\rm out}$ is the looped vertex of $H_{\rm ind}$ and $v_{\rm in}$ is the unlooped vertex. Traditionally, the activity associated with $v_{\rm out}$ in this model is $1$ and the activity associated with $v_{\rm in}$ is some $\lambda > 0$. If $\lambda$ is small, then the model favors sparser configurations of occupied vertices, while if $\lambda$ is large, then it favors denser configurations. If $\lambda=1$, then the model gives all configurations equal weight. See \cref{fig:hardcore} for an example.
\end{example}

\begin{figure}[ht]
    \centering
    \captionsetup{width=.7\linewidth}
    \begin{tikzpicture}
        \coordinate[label=below:{$v_{\rm out}$}] (vo) at (0,0);
        \coordinate[label=below:{$v_{\rm in}$}] (vi) at (2,0);

        \draw[thick,black] (vo) -- (vi);
        \draw[thick,black] (vo) to [out=40,in=140,loop,style={min distance=10mm}] (vo);

        \draw[black,fill=white] (vo) circle (3pt);
        \draw[fill=black] (vi) circle (3pt);

        \node at (2.5,-1.75) {};
    \end{tikzpicture}
    \hspace{1mm}
    \begin{tikzpicture}
        \coordinate (v1) at (-0.5,0.87);
        \coordinate (v2) at (-0.5,-0.87);
        \coordinate (v3) at (0,0);
        \coordinate (v4) at (1,0);
        \coordinate (v5) at (1,1.75);
        \coordinate (v6) at (1,-1.75);
        \coordinate (v7) at (1.5,0.87);
        \coordinate (v8) at (1.5,-0.87);
        \coordinate (v9) at (2.5,0.87);
        \coordinate (v10) at (2.5,-0.87);

        \draw[thick,black] (v1) -- (v3) -- (v2);
        \draw[thick,black] (v3) -- (v4) -- (v7) -- (v9);
        \draw[thick,black] (v4) -- (v8) -- (v10);
        \draw[thick,black] (v7) -- (v5);
        \draw[thick,black] (v8) -- (v6);

        \draw[black,fill=black] (v1) circle (3pt);
        \draw[black,fill=black] (v2) circle (3pt);
        \draw[black,fill=white] (v3) circle (3pt);
        \draw[black,fill=white] (v4) circle (3pt);
        \draw[black,fill=white] (v5) circle (3pt);
        \draw[black,fill=white] (v6) circle (3pt);
        \draw[black,fill=white] (v7) circle (3pt);
        \draw[black,fill=black] (v8) circle (3pt);
        \draw[black,fill=black] (v9) circle (3pt);
        \draw[black,fill=white] (v10) circle (3pt);
        \node at (-1,0) {};
    \end{tikzpicture}
    \caption{The constraint graph encoding the hard-core model (left) and a hard-core configuration on a tree (right). If $v_{\rm out}$ has activity $1$ and $v_{\rm in}$ has activity $\lambda$, then the weight of the illustrated configuration is $\lambda^4$.} \label{fig:hardcore}
\end{figure}
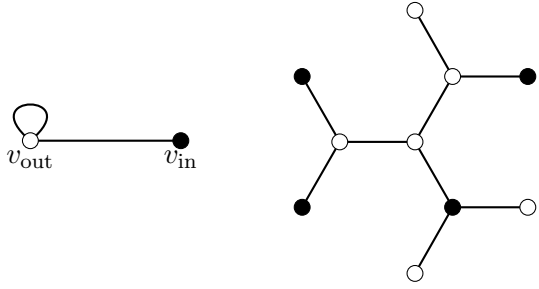

\begin{example}
The {\it Widom-Rowlinson model} is a model of the occupation of space by $k$ mutually repulsive particles. This model was introduced by Widom and Rowlinson~\cite{WidomRowlinson1970} as a model of liquid-vapor phase transition. There are $k+1$ spins, $v_0$ through $v_k$. Empty space is represented by $v_0$, and the remaining spins are used to represent $k$ different particles. A site occupied by a particle of type $i$ can be adjacent to empty space or to other particles of type $i$, but not to particles of type $j$ for any $j \ne i$. Empty space comes with no restriction. A valid configuration of particles is thus modeled by an $H$-coloring of $G$ where $H$ is the fully looped star on vertex set $\{v_0, \ldots v_k\}$ with $v_0$ as the central vertex. A vertex of $G$ being mapped to $v_0$ represents ``unoccupied" and being mapped to $v_i$ represents ``occupied by a particle of type $i$.'' Traditionally, the activity associated with $v_0$ in this model is $1$ and for each $i > 0$ the activity associated with $v_i$ is some $\lambda_i > 0$. See \cref{fig:widomrowlinson} for an example with two particles, denoted red and blue.  
\end{example}

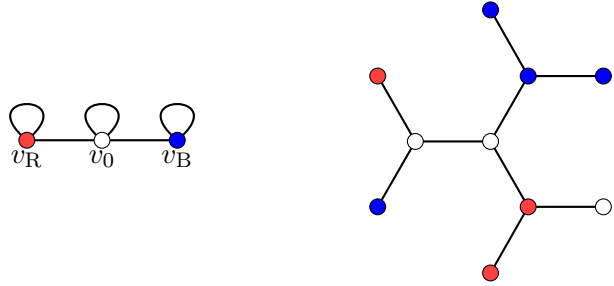
\begin{figure}[ht]
    \centering
    \captionsetup{width=.7\linewidth}
    \begin{tikzpicture}
        \coordinate[label=below:{$v_{\rm R}$}] (vr) at (0,0);
        \coordinate[label=below:{$v_0$}] (v0) at (1,0);
        \coordinate[label=below:{$v_{\rm B}$}] (vb) at (2,0);

        \draw[thick,black] (vr) -- (v0) -- (vb);
        \draw[thick,black] (v0) to [out=40,in=140,loop,style={min distance=10mm}] (v0);
        \draw[thick,black] (vr) to [out=40,in=140,loop,style={min distance=10mm}] (vr);
        \draw[thick,black] (vb) to [out=40,in=140,loop,style={min distance=10mm}] (vb);

        \draw[black,fill=white] (v0) circle (3pt);
        \draw[black,fill=red!75] (vr) circle (3pt);
        \draw[black,fill=blue] (vb) circle (3pt);
        \node at (2.5,-1.75) {};
    \end{tikzpicture}
    \hspace{1cm}
    \begin{tikzpicture}
        \coordinate (v1) at (-0.5,0.87);
        \coordinate (v2) at (-0.5,-0.87);
        \coordinate (v3) at (0,0);
        \coordinate (v4) at (1,0);
        \coordinate (v5) at (1,1.75);
        \coordinate (v6) at (1,-1.75);
        \coordinate (v7) at (1.5,0.87);
        \coordinate (v8) at (1.5,-0.87);
        \coordinate (v9) at (2.5,0.87);
        \coordinate (v10) at (2.5,-0.87);

        \draw[thick,black] (v1) -- (v3) -- (v2);
        \draw[thick,black] (v3) -- (v4) -- (v7) -- (v9);
        \draw[thick,black] (v4) -- (v8) -- (v10);
        \draw[thick,black] (v7) -- (v5);
        \draw[thick,black] (v8) -- (v6);

        \draw[black,fill=red!75] (v1) circle (3pt);
        \draw[black,fill=blue] (v2) circle (3pt);
        \draw[black,fill=white] (v3) circle (3pt);
        \draw[black,fill=white] (v4) circle (3pt);
        \draw[black,fill=blue] (v5) circle (3pt);
        \draw[black,fill=red!75] (v6) circle (3pt);
        \draw[black,fill=blue] (v7) circle (3pt);
        \draw[black,fill=red!75] (v8) circle (3pt);
        \draw[black,fill=blue] (v9) circle (3pt);
        \draw[black,fill=white] (v10) circle (3pt);
        \node at (-1,0) {};
    \end{tikzpicture}
    \caption{The constraint graph encoding the Widom-Rowlinson model (left) and a Widom-Rowlinson configuration on a tree (right). If $v_0$ has activity $1$, $v_{\rm R}$ has activity $\lambda_{\rm R}$, and $v_{\rm B}$ has activity $\lambda_{\rm B}$, then the weight of the illustrated configuration is $\lambda_{\rm R}^3 \lambda_{\rm B}^4$.} 
    \label{fig:widomrowlinson}
\end{figure}

\begin{example}
The \emph{capacity $C$ independent set} or \emph{capacity $C$ multicast model} was introduced in \cref{sec:loop-thresh}. Recall that configurations in this model are encoded as $H$-colorings by taking $H=H_C$ to be the graph on vertex set $\{0,1,\ldots, C\}$ with edge set $\{\{a,b\}: a+b \le C\}$. At $C=1$, $H_C$ reduces to the hard-core model of \cref{ex:hc-model}. Traditionally, in this model the activity assigned to vertex $i$ is $\lambda^i$ for some $\lambda >0$. See \cref{fig:cappacityc} for an example.
\end{example}

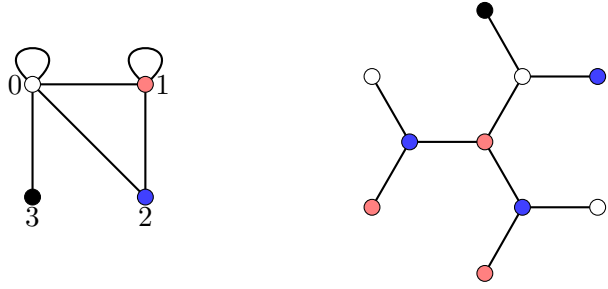
\begin{figure}[ht]
    \centering
    \captionsetup{width=.7\linewidth}
    \begin{tikzpicture}
        \coordinate[label=left:{$0$}] (v0) at (0,0);
        \coordinate[label=right:{$1$}] (v1) at (1.5,0);
        \coordinate[label=below:{$2$}] (v2) at (1.5,-1.5);
        \coordinate[label=below:{$3$}] (v3) at (0,-1.5);

        \draw[thick,black] (v0) -- (v1) -- (v2) -- (v0);
        \draw[black,thick] (v0) -- (v3);

        \draw[thick,black] (v0) to [out=40,in=140,loop,style={min distance=10mm}] (v0);
        \draw[thick,black] (v1) to [out=40,in=140,loop,style={min distance=10mm}] (v1);

        \draw[black,fill=white] (v0) circle (3pt);
        \draw[black,fill=red!50] (v1) circle (3pt);
        \draw[black,fill=blue!75] (v2) circle (3pt);
        \draw[black,fill=black] (v3) circle (3pt);
        \node at (2.5,-2.5) {};
    \end{tikzpicture}
    \hspace{1cm}
    \begin{tikzpicture}
        \coordinate (v1) at (-0.5,0.87);
        \coordinate (v2) at (-0.5,-0.87);
        \coordinate (v3) at (0,0);
        \coordinate (v4) at (1,0);
        \coordinate (v5) at (1,1.75);
        \coordinate (v6) at (1,-1.75);
        \coordinate (v7) at (1.5,0.87);
        \coordinate (v8) at (1.5,-0.87);
        \coordinate (v9) at (2.5,0.87);
        \coordinate (v10) at (2.5,-0.87);

        \draw[thick,black] (v1) -- (v3) -- (v2);
        \draw[thick,black] (v3) -- (v4) -- (v7) -- (v9);
        \draw[thick,black] (v4) -- (v8) -- (v10);
        \draw[thick,black] (v7) -- (v5);
        \draw[thick,black] (v8) -- (v6);

        \draw[black,fill=white] (v1) circle (3pt);
        \draw[black,fill=red!50] (v2) circle (3pt);
        \draw[black,fill=blue!75] (v3) circle (3pt);
        \draw[black,fill=red!50] (v4) circle (3pt);
        \draw[black,fill=black] (v5) circle (3pt);
        \draw[black,fill=red!50] (v6) circle (3pt);
        \draw[black,fill=white] (v7) circle (3pt);
        \draw[black,fill=blue!75] (v8) circle (3pt);
        \draw[black,fill=blue!75] (v9) circle (3pt);
        \draw[black,fill=white] (v10) circle (3pt);
        \node at (-1,0) {};
    \end{tikzpicture}
    \caption{The constraint graph encoding the capacity $3$ independent set model (left) and a capacity $3$ independent set on a tree (right). If spin $i$ has activity $\lambda^i$, then the configuration has weight $\lambda^{12}$.}
    \label{fig:cappacityc}
\end{figure}

The problem of maximizing or minimizing the partition function of a weighted hard-constraint spin model with constraint graph $H$ is very closely related to enumeration of $H^{\bf n}$-colorings, as we shall now see. Consider a weighted hard-constraint spin model on a graph $G$ with $H$ on vertex set $\{v_0, v_1, \ldots, v_k\}$ encoding the constraints on the spins. Let ${\boldsymbol\lambda}=(\lambda_0, \lambda_1, \ldots, \lambda_k)$ be an assignment of positive rational activities to the spins. Let integer $M$ be such that $M\lambda_i = n_i \in {\mathbb N}$ for each $i$. For any $H$-coloring $f:V(G)\rightarrow V(H)$ we have, from equation \eqref{eq:config-weight},
\[
M^nw^{(H,{\boldsymbol\lambda})}(f) = \prod_{i=0}^k n_i^{|f^{-1}(v_i)|}
\]
where $n$ is the number of vertices of $G$. Summing over all $f \in \Hom(G,H)$ and recalling the definition of the blow-up graph, we get
\begin{equation} \label{eq:weighted-partition-function}
M^n Z^{\boldsymbol\lambda}(G,H) = \hom(G, H^{\bf n}).
\end{equation}
This idea leads to the following lemma.        
\begin{lemma} \label{lem:blow-up}
Let $H$ be an arbitrary graph on vertex set $\{v_0, v_1, \ldots, v_k\}$, perhaps with loops but without multiple edges. If $G_1$ and $G_2$ are graphs on the same number of vertices with the property that $\hom(G_1, H^{\bf n}) \le \hom(G_2, H^{\bf n})$ for every vector ${\bf n}$ of positive integers of length $k+1$, then for every assignment of real positive (not necessarily rational) activities ${\boldsymbol\lambda}$ to the vertices of $H$, we have $Z^{\boldsymbol\lambda}(G_1,H) \le Z^{\boldsymbol\lambda}(G_2,H)$.   
\end{lemma}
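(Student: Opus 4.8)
The plan is to reduce everything to the rational case via equation~\eqref{eq:weighted-partition-function}, and then to extend to arbitrary positive real activities by a density-and-continuity argument. There is no deep content here; the only point requiring care is that equation~\eqref{eq:weighted-partition-function} is only directly available for rational activities (it requires clearing denominators to produce an integer blow-up vector ${\bf n}$), so the real case genuinely needs the limiting step rather than a one-line substitution.

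First I would dispose of the rational case. Suppose ${\boldsymbol\lambda}=(\lambda_0,\ldots,\lambda_k)$ is a vector of positive rationals, and let $n$ denote the common number of vertices of $G_1$ and $G_2$. Choose a positive integer $M$ that is a common denominator of the $\lambda_i$, so that $n_i := M\lambda_i$ is a positive integer for each $i$, and set ${\bf n}=(n_0,\ldots,n_k)$. Applying equation~\eqref{eq:weighted-partition-function} to each of $G_1$ and $G_2$ gives $M^n Z^{\boldsymbol\lambda}(G_j,H) = \hom(G_j, H^{\bf n})$ for $j=1,2$. By hypothesis $\hom(G_1,H^{\bf n}) \le \hom(G_2,H^{\bf n})$, so dividing through by $M^n > 0$ yields $Z^{\boldsymbol\lambda}(G_1,H) \le Z^{\boldsymbol\lambda}(G_2,H)$. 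Thus the desired inequality holds for every positive rational activity vector.

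Next I would upgrade to arbitrary positive real activities. Directly from the definition, $Z^{\boldsymbol\lambda}(G,H) = \sum_{f \in \Hom(G,H)} \prod_{i=0}^k \lambda_i^{|f^{-1}(v_i)|}$ is a polynomial in the variables $\lambda_0,\ldots,\lambda_k$ (with non-negative integer coefficients), hence a continuous function of ${\boldsymbol\lambda}$ on all of ${\mathbb R}_{>0}^{k+1}$. Since the continuous function ${\boldsymbol\lambda} \mapsto Z^{\boldsymbol\lambda}(G_2,H) - Z^{\boldsymbol\lambda}(G_1,H)$ is non-negative on the dense subset ${\mathbb Q}_{>0}^{k+1} \subseteq {\mathbb R}_{>0}^{k+1}$ by the previous paragraph, it is non-negative on all of ${\mathbb R}_{>0}^{k+1}$. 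This is exactly the claim: $Z^{\boldsymbol\lambda}(G_1,H) \le Z^{\boldsymbol\lambda}(G_2,H)$ for every assignment of real positive activities.

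The main (and only) obstacle is conceptual rather than technical: recognizing that the integer-valued identity~\eqref{eq:weighted-partition-function} cannot be invoked for irrational activities and that the gap must instead be bridged by continuity of the partition function together with the density of positive rationals; the remaining steps are routine bookkeeping.
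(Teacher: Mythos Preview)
Your argument is correct and follows essentially the same approach as the paper: reduce to rational activities via equation~\eqref{eq:weighted-partition-function} and then pass to arbitrary positive reals by the continuity of the partition function and the density of the positive rationals. If anything, your version is more explicit, spelling out the polynomial (hence continuous) dependence of $Z^{\boldsymbol\lambda}(G,H)$ on ${\boldsymbol\lambda}$ that the paper takes for granted.
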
 

\begin{proof}
When the $\lambda_i$s are all rational, the result follows directly from equation \eqref{eq:weighted-partition-function}. If $(\lambda_i)_{i=0}^k$ is not a rational $(k+1)$-tuple, we can consider a sequence of rational $(k+1)$-tuples that converge to $(\lambda_i)_{i=0}^k$. The validity of \cref{lem:blow-up}, together with the continuity of $Z^{\boldsymbol\lambda}(G,H)$, allows us to conclude that $Z^{\boldsymbol\lambda}(G_1,H) \le Z^{\boldsymbol\lambda}(G_2,H)$ in this case.     
\end{proof}

The question of which graphs in various families maximize and minimize the partition function of various hard-constraint models has a well-developed literature; see e.g.~\cite{CohenPerkinsTetali2017, CutlerRadcliffe2011, Sernau2018} for the Widom-Rowlinson model, \cite{Kahn2001, Zhao2010, DaviesJenssenPerkinsRoberts2017} for the hard-core model, and \cite{CutlerKass2020, CutlerRadcliffe2011, CutlerRadcliffe2014, GalvinTetali2004, Galvin2006, SahSawhneyStonerZhao2020} for general models.

We can use \cref{lem:blow-up} to draw conclusions about the hard-core model, the Widom-Rowlinson model, and the capacity $C$ independent set model on trees.

\begin{theorem}
Let $n \ge 1$ be arbitrary and let $T_n \in \mathcal{T}_n$.
\begin{enumerate}
    \item[(a)] For the hard-core model with ${\boldsymbol\lambda}=(1,\lambda)$ for arbitrary $\lambda > 0$, we have
    \[
    Z^{\boldsymbol\lambda}(P_n,H_{\rm ind}) \le Z^{\boldsymbol\lambda}(T_n,H_{\rm ind}) \le Z^{\boldsymbol\lambda}(S_n,H_{\rm ind}).
    \]
    
    \item[(b)] For the $k$-state Widom-Rowlinson model with ${\boldsymbol\lambda}=(1,\lambda_1, \cdots, \lambda_k)$ for arbitrary $\lambda_i > 0$, we have
    \[
    Z^{\boldsymbol\lambda}(P_n,H_{{\rm WR}(k)}) \le Z^{\boldsymbol\lambda}(T_n,H_{{\rm WR}(k)}) \le Z^{\boldsymbol\lambda}(S_n,H_{{\rm WR}(k)}).
    \]
      
    \item[(c)] For the capacity $C$ independent set model with ${\boldsymbol\lambda}=(1, \lambda, \cdots, \lambda^C)$ for arbitrary $\lambda > 0$, we have
    \[
    Z^{\boldsymbol\lambda}(P_n,H_C) \le Z^{\boldsymbol\lambda}(T_n,H_C) \le Z^{\boldsymbol\lambda}(S_n,H_C).
    \]
\end{enumerate}
\end{theorem}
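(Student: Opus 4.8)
The plan is to reduce all three double inequalities to statements about ordinary $H$-coloring counts of blow-ups, via \cref{lem:blow-up}: if $\hom(G_1,H^{\bf n}) \le \hom(G_2,H^{\bf n})$ holds for every positive integer vector ${\bf n}$, then $Z^{\boldsymbol\lambda}(G_1,H) \le Z^{\boldsymbol\lambda}(G_2,H)$ for every assignment ${\boldsymbol\lambda}$ of positive activities. For the upper bounds (comparison with $S_n$), I would apply \cref{thm:siderenko} with target graph $H^{\bf n}$: since Sidorenko's inequality holds for \emph{every} target graph, $\hom(T_n,H^{\bf n}) \le \hom(S_n,H^{\bf n})$ for all ${\bf n}$, and \cref{lem:blow-up} (with $G_1 = T_n$, $G_2 = S_n$, both on $n$ vertices) upgrades this to $Z^{\boldsymbol\lambda}(T_n,H) \le Z^{\boldsymbol\lambda}(S_n,H)$ in each of the three models. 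So the content lies in the lower bounds (comparison with $P_n$), for which it suffices to show that $H^{\bf n}$ is Hoffman-London for each of the three constraint graphs $H \in \{H_{\rm ind},\, H_{{\rm WR}(k)},\, H_C\}$ and every ${\bf n}$; then \cref{lem:blow-up} with $G_1 = P_n$, $G_2 = T_n$ finishes the argument.

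For the Widom--Rowlinson case this is immediate: $H_{{\rm WR}(k)}$ is by definition the fully looped star $S_{k+1}^\circ$, so $H_{{\rm WR}(k)}^{\bf n}$ is a blow-up of a fully looped star and is Hoffman-London by \cref{cor:blowup-of-fully-looped-star}. For the hard-core and capacity-$C$ cases I would first record the elementary fact that the blow-up of a loop threshold graph is again loop threshold: if $v_1, \ldots, v_m$ is a nested neighborhood ordering of $H$ and $H^{\bf n}$ replaces $v_i$ with a class $V_i$, then listing the vertices of $H^{\bf n}$ block by block ($V_1$, then $V_2$, and so on) is a nested neighborhood ordering, because for $x \in V_i$ we have $N_{H^{\bf n}}(x) = \bigcup_{j:\, v_j \in N_H(v_i)} V_j$, a set that depends only on $i$ and is monotone along the ordering since $N_H(v_i) \subseteq N_H(v_{i'})$ whenever $i \le i'$ (a loop at $v_i$ corresponds exactly to $V_i \subseteq N_{H^{\bf n}}(x)$). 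Since $H_{\rm ind}$ is loop threshold (it is $K_1$ with one looped dominating vertex adjoined) and $H_C$ is loop threshold (with nested neighborhood ordering $C, C-1, \ldots, 1, 0$, as noted in \cref{sec:loop-thresh}), both $H_{\rm ind}^{\bf n}$ and $H_C^{\bf n}$ are loop threshold, hence Hoffman-London by \cref{thm:loop_threshold}. Assembling the pieces, for each of (a), (b), (c) we obtain $\hom(P_n,H^{\bf n}) \le \hom(T_n,H^{\bf n}) \le \hom(S_n,H^{\bf n})$ for every ${\bf n}$, and two applications of \cref{lem:blow-up} yield the claimed chains of partition-function inequalities.

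There is no serious obstacle here; the proof is an assembly of tools already established. The one step requiring an argument rather than a citation is the observation that blow-ups preserve the loop threshold property, and that is just the short nested-neighborhood computation sketched above. (Alternatively, for the hard-core case one can sidestep loop threshold graphs entirely: $H_{\rm ind}^{\bf n}$ is an empty graph with $n_1$ looped dominating vertices adjoined, so it is Hoffman-London directly by \cref{thm:dom_reg}. The loop threshold route is preferable only in that it handles (a) and (c) uniformly.)
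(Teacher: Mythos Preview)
Your proposal is correct and follows essentially the same approach as the paper: both derive the upper bounds from Sidorenko's theorem via \cref{lem:blow-up}, and both derive the lower bounds by showing the relevant blow-ups are Hoffman-London, using \cref{cor:blowup-of-fully-looped-star} for the Widom--Rowlinson case and the closure of loop threshold graphs under blow-ups together with \cref{thm:loop_threshold} for the hard-core and capacity-$C$ cases. Your explicit nested-neighborhood verification that blow-ups preserve the loop threshold property is a detail the paper merely asserts, so if anything your write-up is slightly more complete.
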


\begin{proof}
All three upper bounds follow from \cref{thm:siderenko} and \cref{lem:blow-up}. For the lower bounds, it suffices to establish that arbitrary positive integer blow-ups of $H_{\rm ind}$, $H_{{\rm WR}(k)}$, and $H_C$ are Hoffman-London, and then appeal to \cref{lem:blow-up}. For $H_{\rm ind}$ and $H_C$, we use that arbitrary blow-ups of loop threshold graphs are still loop threshold and appeal to \cref{thm:loop_threshold}, and we use \cref{cor:blowup-of-fully-looped-star} for $H_{{\rm WR}(k)}$. 
\end{proof}

\section{Classification of target graphs with at most three vertices} \label{sec:constraint-3-vertices}

There are twenty-eight graphs (with or without loops, not necessarily connected) on three or fewer vertices. These are shown in \cref{fig:H-on-3-or-fewer-vertices}. For each $H$ on this list, each $n$, and each $T_n \in {\mathcal T}_n$, it holds that $\hom(P_n, H) \le \hom(T_n,H)$. The collection of results we have presented thus far establish this fact, and also allow us, for each $H$, to completely specify which other $T_n \in {\mathcal T}_n$, if any, also minimize the count of $H$-colorings. As $\hom(T_1,H)=|V(H)|$ for every $H$, we confine attention to trees on $n \ge 2$ vertices.

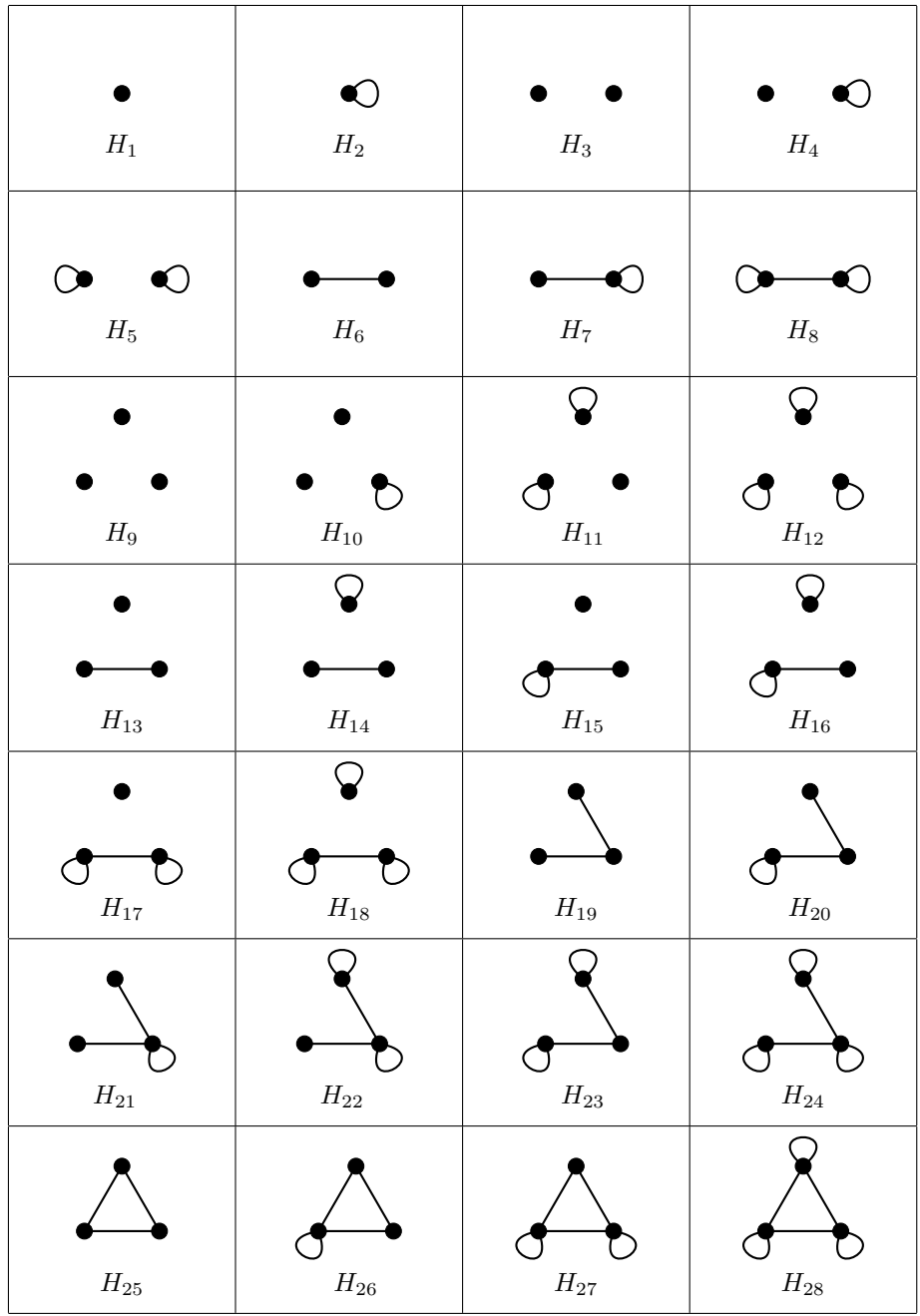
\begin{figure}[ht!]
\begin{center}
{\small
\begin{tabular}{ | c | c | c | c |}
\hline
\begin{tikzpicture}
\coordinate (A) at (0, 0);
\draw[fill=black] (A) circle (3pt);
\node[] at (0,-0.7) {$H_1$};
\node[] at (-1,0) {};
\node[] at (1,0) {};
\node[] at  (0, 1.05) {};
\node[] at (0,-1.05) {};
\end{tikzpicture}
&
\begin{tikzpicture}
\coordinate (A) at (0, 0);
\draw[fill=black] (A) circle (3pt);
\draw[thick] (A) to [out=-50,in=50,loop,style={min distance=8mm}] (A);
\node[] at (0,-0.7) {$H_2$};
\node[] at (-1,0) {};
\node[] at (1,0) {};
\node[] at  (0, 1.05) {};
\node[] at (0,-1.05) {};
\end{tikzpicture}
&
\begin{tikzpicture}
\coordinate (A) at (-0.5, 0);
\coordinate (B) at  (0.5, 0);
\draw[fill=black] (A) circle (3pt);
\draw[fill=black] (B) circle (3pt);
\node[] at (0,-0.7) {$H_3$};
\node[] at (-1,0) {};
\node[] at (1,0) {};
\node[] at  (0, 1.05) {};
\node[] at (0,-1.05) {};
\end{tikzpicture}
&
\begin{tikzpicture}
\coordinate (A) at (-0.5, 0);
\coordinate (B) at  (0.5, 0);
\draw[fill=black] (A) circle (3pt);
\draw[fill=black] (B) circle (3pt);
\draw[thick] (B) to [out=-50,in=50,loop,style={min distance=8mm}] (B);
\node[] at (0,-0.7) {$H_4$};
\node[] at (-1,0) {};
\node[] at (1,0) {};
\node[] at  (0, 1.05) {};
\node[] at (0,-1.05) {};
\end{tikzpicture}
\\
\hline
\begin{tikzpicture}
\coordinate (A) at (-0.5, 0);
\coordinate (B) at  (0.5, 0);
\draw[fill=black] (A) circle (3pt);
\draw[fill=black] (B) circle (3pt);
\draw[thick] (A) to [out=230,in=130,loop,style={min distance=8mm}] (A);
\draw[thick] (B) to [out=-50,in=50,loop,style={min distance=8mm}] (B);
\node[] at (0,-0.7) {$H_5$};
\node[] at (-1,0) {};
\node[] at (1,0) {};
\node[] at  (0, 1.05) {};
\node[] at (0,-1.05) {};
\end{tikzpicture}
&

\begin{tikzpicture}
\coordinate (A) at (-0.5, 0);
\coordinate (B) at  (0.5, 0);
\draw[fill=black] (A) circle (3pt);
\draw[fill=black] (B) circle (3pt);
\draw[black, thick] (A)--(B);
\node[] at (0,-0.7) {$H_6$};
\node[] at (-1,0) {};
\node[] at (1,0) {};
\node[] at  (0, 1.05) {};
\node[] at (0,-1.05) {};
\end{tikzpicture}
&
\begin{tikzpicture}
\coordinate (A) at (-0.5, 0);
\coordinate (B) at  (0.5, 0);
\coordinate (C) at  (0, 0.866);
\draw[fill=black] (A) circle (3pt);
\draw[fill=black] (B) circle (3pt);
\draw[black, thick] (A)--(B);
\draw[thick] (B) to [out=-50,in=50,loop,style={min distance=8mm}] (B);
\node[] at (0,-0.7) {$H_7$};
\node[] at (-1,0) {};
\node[] at (1,0) {};
\node[] at  (0, 1.05) {};
\node[] at (0,-1.05) {};
\end{tikzpicture}
&
\begin{tikzpicture}
\coordinate (A) at (-0.5, 0);
\coordinate (B) at  (0.5, 0);
\draw[fill=black] (A) circle (3pt);
\draw[fill=black] (B) circle (3pt);
\draw[black, thick] (A)--(B);
\draw[thick] (A) to [out=230,in=130,loop,style={min distance=8mm}] (A);
\draw[thick] (B) to [out=-50,in=50,loop,style={min distance=8mm}] (B);
\node[] at (0,-0.7) {$H_8$};
\node[] at (-1,0) {};
\node[] at (1,0) {};
\node[] at  (0, 1.05) {};
\node[] at (0,-1.05) {};
\end{tikzpicture}
\\
\hline
\begin{tikzpicture}
\coordinate (A) at (-0.5, 0);
\coordinate (B) at  (0.5, 0);
\coordinate (C) at  (0, 0.866);
\draw[fill=black] (A) circle (3pt);
\draw[fill=black] (B) circle (3pt);
\draw[fill=black] (C) circle (3pt);
\node[] at (0,-0.7) {$H_9$};
\node[] at (-1,0) {};
\node[] at (1,0) {};
\end{tikzpicture}
&
\begin{tikzpicture}
\coordinate (A) at (-0.5, 0);
\coordinate (B) at  (0.5, 0);
\coordinate (C) at  (0, 0.866);
\draw[fill=black] (A) circle (3pt);
\draw[fill=black] (B) circle (3pt);
\draw[fill=black] (C) circle (3pt);
\draw[thick] (B) to [out=250,in=350,loop,style={min distance=8mm}] (B);
\node[] at (0,-0.7) {$H_{10}$};
\node[] at (-1,0) {};
\node[] at (1,0) {};
\end{tikzpicture}
&
\begin{tikzpicture}
\coordinate (A) at (-0.5, 0);
\coordinate (B) at  (0.5, 0);
\coordinate (C) at  (0, 0.866);
\draw[fill=black] (A) circle (3pt);
\draw[fill=black] (B) circle (3pt);
\draw[fill=black] (C) circle (3pt);
\draw[thick] (A) to [out=190,in=290,loop,style={min distance=8mm}] (A);
\draw[thick] (C) to [out=40,in=140,loop,style={min distance=8mm}] (C);
\node[] at (0,-0.7) {$H_{11}$};
\node[] at (-1,0) {};
\node[] at (1,0) {};
\end{tikzpicture}
&
\begin{tikzpicture}
\coordinate (A) at (-0.5, 0);
\coordinate (B) at  (0.5, 0);
\coordinate (C) at  (0, 0.866);
\draw[fill=black] (A) circle (3pt);
\draw[fill=black] (B) circle (3pt);
\draw[fill=black] (C) circle (3pt);
\draw[thick] (A) to [out=190,in=290,loop,style={min distance=8mm}] (A);
\draw[thick] (B) to [out=250,in=350,loop,style={min distance=8mm}] (B);
\draw[thick] (C) to [out=40,in=140,loop,style={min distance=8mm}] (C);
\node[] at (0,-0.7) {$H_{12}$};
\node[] at (-1,0) {};
\node[] at (1,0) {};
\end{tikzpicture}
\\
\hline
\begin{tikzpicture}
\coordinate (A) at (-0.5, 0);
\coordinate (B) at  (0.5, 0);
\coordinate (C) at  (0, 0.866);
\draw[fill=black] (A) circle (3pt);
\draw[fill=black] (B) circle (3pt);
\draw[fill=black] (C) circle (3pt);
\draw[black, thick] (A)--(B);
\node[] at (0,-0.7) {$H_{13}$};
\node[] at (-1,0) {};
\node[] at (1,0) {};
\end{tikzpicture}
&
\begin{tikzpicture}
\coordinate (A) at (-0.5, 0);
\coordinate (B) at  (0.5, 0);
\coordinate (C) at  (0, 0.866);
\draw[fill=black] (A) circle (3pt);
\draw[fill=black] (B) circle (3pt);
\draw[fill=black] (C) circle (3pt);
\draw[black, thick] (A)--(B);
\draw[thick] (C) to [out=40,in=140,loop,style={min distance=8mm}] (C);
\node[] at (0,-0.7) {$H_{14}$};
\node[] at (-1,0) {};
\node[] at (1,0) {};
\end{tikzpicture}
&
\begin{tikzpicture}
\coordinate (A) at (-0.5, 0);
\coordinate (B) at  (0.5, 0);
\coordinate (C) at  (0, 0.866);
\draw[fill=black] (A) circle (3pt);
\draw[fill=black] (B) circle (3pt);
\draw[fill=black] (C) circle (3pt);
\draw[black, thick] (A)--(B);
\draw[thick] (A) to [out=190,in=290,loop,style={min distance=8mm}] (A);
\node[] at (0,-0.7) {$H_{15}$};
\node[] at (-1,0) {};
\node[] at (1,0) {};
\end{tikzpicture}
&
\begin{tikzpicture}
\coordinate (A) at (-0.5, 0);
\coordinate (B) at  (0.5, 0);
\coordinate (C) at  (0, 0.866);
\draw[fill=black] (A) circle (3pt);
\draw[fill=black] (B) circle (3pt);
\draw[fill=black] (C) circle (3pt);
\draw[black, thick] (A)--(B);
\draw[thick] (A) to [out=190,in=290,loop,style={min distance=8mm}] (A);
\draw[thick] (C) to [out=40,in=140,loop,style={min distance=8mm}] (C);
\node[] at (0,-0.7) {$H_{16}$};
\node[] at (-1,0) {};
\node[] at (1,0) {};
\end{tikzpicture}
\\
\hline
\begin{tikzpicture}
\coordinate (A) at (-0.5, 0);
\coordinate (B) at  (0.5, 0);
\coordinate (C) at  (0, 0.866);
\draw[fill=black] (A) circle (3pt);
\draw[fill=black] (B) circle (3pt);
\draw[fill=black] (C) circle (3pt);
\draw[black, thick] (A)--(B);
\draw[thick] (A) to [out=190,in=290,loop,style={min distance=8mm}] (A);
\draw[thick] (B) to [out=250,in=350,loop,style={min distance=8mm}] (B);
\node[] at (0,-0.7) {$H_{17}$};
\node[] at (-1,0) {};
\node[] at (1,0) {};
\end{tikzpicture}
&
\begin{tikzpicture}
\coordinate (A) at (-0.5, 0);
\coordinate (B) at  (0.5, 0);
\coordinate (C) at  (0, 0.866);
\draw[fill=black] (A) circle (3pt);
\draw[fill=black] (B) circle (3pt);
\draw[fill=black] (C) circle (3pt);
\draw[black, thick] (A)--(B);
\draw[thick] (A) to [out=190,in=290,loop,style={min distance=8mm}] (A);
\draw[thick] (B) to [out=250,in=350,loop,style={min distance=8mm}] (B);
\draw[thick] (C) to [out=40,in=140,loop,style={min distance=8mm}] (C);
\node[] at (0,-0.7) {$H_{18}$};
\node[] at (-1,0) {};
\node[] at (1,0) {};
\end{tikzpicture}
&
\begin{tikzpicture}
\coordinate (A) at (-0.5, 0);
\coordinate (B) at  (0.5, 0);
\coordinate (C) at  (0, 0.866);
\draw[fill=black] (A) circle (3pt);
\draw[fill=black] (B) circle (3pt);
\draw[fill=black] (C) circle (3pt);
\draw[black, thick] (A)--(B);
\draw[black, thick] (B)--(C);
\node[] at (0,-0.7) {$H_{19}$};
\node[] at (-1,0) {};
\node[] at (1,0) {};
\end{tikzpicture}
&
\begin{tikzpicture}
\coordinate (A) at (-0.5, 0);
\coordinate (B) at  (0.5, 0);
\coordinate (C) at  (0, 0.866);
\draw[fill=black] (A) circle (3pt);
\draw[fill=black] (B) circle (3pt);
\draw[fill=black] (C) circle (3pt);
\draw[black, thick] (A)--(B);
\draw[black, thick] (B)--(C);
\draw[thick] (A) to [out=190,in=290,loop,style={min distance=8mm}] (A);
\node[] at (0,-0.7) {$H_{20}$};
\node[] at (-1,0) {};
\node[] at (1,0) {};
\end{tikzpicture}
\\
\hline
\begin{tikzpicture}
\coordinate (A) at (-0.5, 0);
\coordinate (B) at  (0.5, 0);
\coordinate (C) at  (0, 0.866);
\draw[fill=black] (A) circle (3pt);
\draw[fill=black] (B) circle (3pt);
\draw[fill=black] (C) circle (3pt);
\draw[black, thick] (A)--(B);
\draw[black, thick] (B)--(C);
\draw[thick] (B) to [out=250,in=350,loop,style={min distance=8mm}] (B);
\node[] at (0,-0.7) {$H_{21}$};
\node[] at (-1,0) {};
\node[] at (1,0) {};
\end{tikzpicture}
&
\begin{tikzpicture}
\coordinate (A) at (-0.5, 0);
\coordinate (B) at  (0.5, 0);
\coordinate (C) at  (0, 0.866);
\draw[fill=black] (A) circle (3pt);
\draw[fill=black] (B) circle (3pt);
\draw[fill=black] (C) circle (3pt);
\draw[black, thick] (A)--(B);
\draw[black, thick] (B)--(C);
\draw[thick] (B) to [out=250,in=350,loop,style={min distance=8mm}] (B);
\draw[thick] (C) to [out=40,in=140,loop,style={min distance=8mm}] (C);
\node[] at (0,-0.7) {$H_{22}$};
\node[] at (-1,0) {};
\node[] at (1,0) {};
\end{tikzpicture}
&
\begin{tikzpicture}
\coordinate (A) at (-0.5, 0);
\coordinate (B) at  (0.5, 0);
\coordinate (C) at  (0, 0.866);
\draw[fill=black] (A) circle (3pt);
\draw[fill=black] (B) circle (3pt);
\draw[fill=black] (C) circle (3pt);
\draw[black, thick] (A)--(B);
\draw[black, thick] (B)--(C);
\draw[thick] (A) to [out=190,in=290,loop,style={min distance=8mm}] (A);
\draw[thick] (C) to [out=40,in=140,loop,style={min distance=8mm}] (C);
\node[] at (0,-0.7) {$H_{23}$};
\node[] at (-1,0) {};
\node[] at (1,0) {};
\end{tikzpicture}
&
\begin{tikzpicture}
\coordinate (A) at (-0.5, 0);
\coordinate (B) at  (0.5, 0);
\coordinate (C) at  (0, 0.866);
\draw[fill=black] (A) circle (3pt);
\draw[fill=black] (B) circle (3pt);
\draw[fill=black] (C) circle (3pt);
\draw[black, thick] (A)--(B);
\draw[black, thick] (B)--(C);
\draw[thick] (A) to [out=190,in=290,loop,style={min distance=8mm}] (A);
\draw[thick] (B) to [out=250,in=350,loop,style={min distance=8mm}] (B);
\draw[thick] (C) to [out=40,in=140,loop,style={min distance=8mm}] (C);
\node[] at (0,-0.7) {$H_{24}$};
\node[] at (-1,0) {};
\node[] at (1,0) {};
\end{tikzpicture}
\\
\hline
\begin{tikzpicture}
\coordinate (A) at (-0.5, 0);
\coordinate (B) at  (0.5, 0);
\coordinate (C) at  (0, 0.866);
\draw[fill=black] (A) circle (3pt);
\draw[fill=black] (B) circle (3pt);
\draw[fill=black] (C) circle (3pt);
\draw[black, thick] (A)--(B);
\draw[black, thick] (B)--(C);
\draw[black, thick] (A)--(C);
\node[] at (0,-0.7) {$H_{25}$};
\node[] at (-1,0) {};
\node[] at (1,0) {};
\end{tikzpicture}
&
\begin{tikzpicture}
\coordinate (A) at (-0.5, 0);
\coordinate (B) at  (0.5, 0);
\coordinate (C) at  (0, 0.866);
\draw[fill=black] (A) circle (3pt);
\draw[fill=black] (B) circle (3pt);
\draw[fill=black] (C) circle (3pt);
\draw[black, thick] (A)--(B);
\draw[black, thick] (B)--(C);
\draw[black, thick] (A)--(C);
\draw[thick] (A) to [out=190,in=290,loop,style={min distance=8mm}] (A);
\node[] at (0,-0.7) {$H_{26}$};
\node[] at (-1,0) {};
\node[] at (1,0) {};
\end{tikzpicture}
&
\begin{tikzpicture}
\coordinate (A) at (-0.5, 0);
\coordinate (B) at  (0.5, 0);
\coordinate (C) at  (0, 0.866);
\draw[fill=black] (A) circle (3pt);
\draw[fill=black] (B) circle (3pt);
\draw[fill=black] (C) circle (3pt);
\draw[black, thick] (A)--(B);
\draw[black, thick] (B)--(C);
\draw[black, thick] (A)--(C);
\draw[thick] (A) to [out=190,in=290,loop,style={min distance=8mm}] (A);
\draw[thick] (B) to [out=250,in=350,loop,style={min distance=8mm}] (B);
\node[] at (0,-0.7) {$H_{27}$};
\end{tikzpicture}
&
\begin{tikzpicture}
\coordinate (A) at (-0.5, 0);
\coordinate (B) at  (0.5, 0);
\coordinate (C) at  (0, 0.866);
\draw[fill=black] (A) circle (3pt);
\draw[fill=black] (B) circle (3pt);
\draw[fill=black] (C) circle (3pt);
\draw[black, thick] (A)--(B);
\draw[black, thick] (B)--(C);
\draw[black, thick] (A)--(C);
\draw[thick] (A) to [out=190,in=290,loop,style={min distance=8mm}] (A);
\draw[thick] (B) to [out=250,in=350,loop,style={min distance=8mm}] (B);
\draw[thick] (C) to [out=40,in=140,loop,style={min distance=8mm}] (C);
\node[] at (0,-0.7) {$H_{28}$};
\node[] at (-1,0) {};
\node[] at (1,0) {};
\end{tikzpicture}
\\
\hline
\end{tabular}
}
\caption{All target graphs on up to three vertices.}
\label{fig:H-on-3-or-fewer-vertices}
\end{center}
\end{figure}

We begin with those cases in which each tree on $n \ge 2$ vertices admits the same number of $H$-colorings, using \cref{obs:isolated} without additional mention to discuss sets of target graphs that behave identically. For unions of isolated vertices, namely $H_1$, $H_3$, and $H_9$, there are no $H$-colorings of any such tree. We apply \cref{obs:regular} to those $H$ whose non-isolated components are regular to find our trees have unique $H_2$-, $H_4$-, and $H_{10}$-colorings, two $H_6$- and $H_{13}$-colorings, $2^n$ $H_8$- and $H_{17}$-colorings, $3^n$ $H_{28}$-colorings, and finally $3\cdot2^{n-1}$ $H_{23}$- and $H_{25}$-colorings. Then we use \cref{obs:union_of_comp} to consider target graphs with regular components to see each of these trees have two $H_5$-colorings, two $H_{11}$-colorings, three $H_{12}$-colorings, three $H_{14}$-colorings, and $2^n+1$ $H_{18}$-colorings.

Because $H_{19}$ is a complete bipartite graph with unequal parts, \cref{thm:complete_bipartite} tells us the trees minimizing $\hom(T_n,H_{19})$ are precisely those with a balanced bipartition.

\cref{thm:dom_reg} demonstrates that regular graphs to which looped dominating vertices are added are strongly Hoffman-London, including $H_7$, $H_{21}$, $H_{24}$, and $H_{26}$. Furthermore, $H_{15}$ is strongly Hoffman-London due to \cref{thm:dom_reg} and \cref{obs:isolated} and $H_{16}$ is strongly Hoffman-London due to \cref{thm:dom_reg} and \cref{obs:union_of_comp}. 

Note that $H_{20} \times K_2 = P_6$. That $\hom(P_n,H_{20}) \le \hom(T_n,H_{20})$ for all $n$ and $T_n \in {\mathcal T_n}$ now follows from \cref{obs:cartesian_2} and \cref{thm:paths_and_stars_as_H}, and that $\hom(P_n,H_{20}) < \hom(T_n,H_{20})$ for all $n$ and $T_n \in {\mathcal T_n}$ different from $P_n$ follows from \cref{thm:even_path_minimizer}. Finally, $H_{22}$ and $H_{27}$ are loop threshold and thus strongly Hoffman-London by \cref{thm:loop_threshold}.

For convenience, we summarize these results in \cref{tab:28}. In cases where all trees on $n$ vertices admit the same number of $H$-colorings, that number is given.

\begin{table}[ht!]
\begin{center}
\begin{tabular}{|c|c||c|c||c|c|}
\hline
Identifier & Minimizer & Identifier & Minimizer& Identifier & Minimizer\\
\hline
$H_1$ & All trees ($0$) & $H_{11}$ & All trees ($2$) & $H_{20}$ & Paths \\
$H_2$ & All trees ($1$) & $H_{12}$ & All trees ($3$) & $H_{21}$ & Paths \\
$H_3$ & All trees ($0$) & $H_{13}$ & All trees ($2$) & $H_{22}$ & Paths\\
$H_4$ & All trees ($1$) & $H_{14}$ & All trees ($3$) & $H_{23}$ & All trees ($3\cdot2^{n-1}$)\\
$H_5$ & All trees ($2$) & $H_{15}$ & Paths & $H_{24}$ & Paths\\
$H_6$ & All trees ($2$) & $H_{16}$ & Paths & $H_{25}$ & All trees ($3\cdot2^{n-1}$)\\
$H_7$ & Paths & $H_{17}$ & All trees ($2^n$) & $H_{26}$ & Paths \\
$H_8$ & All trees ($2^n$) & $H_{18}$ & All trees ($2^n + 1$) & $H_{27}$ & Paths\\
$H_9$ & All trees ($0$) & $H_{19}$ & Trees with balanced & $H_{28}$&All trees ($3^n$)\\
$H_{10}$ & All trees ($1$) &  &  bipartitions  &&\\
\hline
\end{tabular}
\caption{Classification of tree minimizers of $\hom(T_n,H)$ for $n \ge 2$.}
\label{tab:28}
\end{center}
\end{table}

\section{Open problems} \label{sec:open_problems}

We conclude with a few questions that remain open and which we find of interest.

In addition to \cref{prob:CL}, Csikv\'{a}ri and Lin propose a weaker variant in which ``all $n$'' is replaced by ``all sufficiently large $n$.'' To consider this version of the problem, we introduce the following terminology.

\begin{definition}
We say a target graph $H$ is \emph{eventually Hoffman-London} if for all sufficiently large $n$,
\[ \hom(P_n,H) = \min_{T_n \in \mathcal{T}_n} \hom(T_n,H). \]
If it holds that $\hom(P_n,H) < \hom(T_n,H)$ for all sufficiently large $n$ and $T_n \in \mathcal{T}_n\setminus \{P_n\}$, we say that $H$ \emph{eventually strongly Hoffman-London}. 
\end{definition}

At present, no target graphs are known to be eventually Hoffman-London but not Hoffman-London (nor eventually strongly Hoffman-London but not strongly Hoffman-London). In other words, there are no known target graphs $H$ or which there is a non-path tree $T_n$ admitting fewer $H$-colorings than $P_n$ for a specific $n$ but for which the path minimizes $\hom(T_n,H)$ when $n$ is sufficiently large. To this end, we introduce the following problem.

\begin{problem}
    Does there exist a target graph $H$ and integers $\alpha < \beta$ such that
    \[ \hom(P_{\alpha},H) > \min_{T_{\alpha} \in \mathcal{T}_{\alpha}} \hom(T_{\alpha},H), \]
    but
    \[ \hom(P_{n},H) = \min_{T_n \in \mathcal{T}_n} \hom(T_n,H) \]
    for each $n \ge \beta$?
\end{problem}

\cref{thm:even_path_minimizer} proves that paths on an even number of vertices other than $P_2$ are strongly Hoffman-London. As mentioned, \cref{thm:complete_bipartite} demonstrates that the case is more complicated for $P_3$ as the set of trees with minimal $P_3$-colorings is neither just the path nor all trees. For paths with a larger odd number of vertices, the question remains open. 

\begin{problem}
    For $m \ge 2$, classify the $T \in \mathcal{T}_n$ that minimize $\hom(T,P_{2m+1})$.
\end{problem}

\cref{thm:complete_bipartite} shows that $K_{a,b}$ is Hoffman-London, but it remains open whether complete multipartite graphs $H$ with more than two parts need be Hoffman-London. 

\begin{problem}
    Let $r \ge 3$ and $H$ be a complete $r$-partite graph. Does it follow that $H$ is Hoffman-London?
\end{problem}

Note that $K_{a_1, \ldots, a_q}$-colorings may be viewed as weighted $q$-colorings, so this question is equivalent to asking whether there exist weights such that some tree on $n$ vertices has fewer weighted $q$-colorings than $P_n$.

As discussed in \cref{ssec:two_auto_classes}, we proved that each graph in the family of $H(a,b,\ell)$, in which each vertex of a $b$-clique is replaced with a bouquet of $\ell$ $a$-cliques, is Hoffman-London except in the case $a \ge 3, b = 1$, and $\ell \ge 2$.

\begin{problem}
    For which values of $a \ge 3$ and $\ell \ge 2$, if any, is $H(a,1,\ell)$ Hoffman-London?
\end{problem}

Finally, we proved in \cref{thm:loop_threshold} that loop threshold graphs are Hoffman-London. There are three closely related classes of graphs for which it would be interesting to prove an analogous result:
\begin{enumerate}
\item Threshold graphs: the graphs in the smallest family containing $K_1$ that is closed under adding isolated vertices and (unlooped) dominating vertices.
\item Quasi-loop threshold graphs: the graphs in the smallest family containing $K_1$ and $K_1^\loop$ that is closed under adding isolated vertices, adding looped dominating vertices, and taking disjoint unions.
\item Quasi-threshold graphs: the graphs in the smallest family containing $K_1$ that is closed under adding isolated vertices, adding (unlooped) dominating vertices, and taking disjoint unions.
\end{enumerate}
There are threshold (and so quasi-threshold) graphs on four vertices that the methods and results discussed in this paper cannot handle, such as the triangle with a pendant edge. There are also quasi-loop threshold graphs on four vertices whose status remains open, including the graph obtained from the union of a loop and two isolated vertices by adding a dominating vertex.

\section*{Acknowledgments}

This work was started at the workshop ``Graph Theory: structural properties, labelings, and connections to applications'' hosted by the American Institute of Mathematics (AIM), Pasedena CA, July 22--July 26, 2024. The authors thank AIM and the organizers of the workshop for facilitating their collaboration.

\end{document}